\newcommand{\E}{\mathbb{E}}  
\newcommand{\R}{\mathbb{R}}  
\newcommand{\set}[1]{\left\{#1\right\}}
\newcommand{\norms}[1]{\Vert#1\Vert}
\newcommand{\dom}[1]{\mathrm{dom}\left(#1\right)}
\DeclareMathOperator{\Ocal}{\mathcal{O}} 
\newtheorem{thm}{Theorem}
\newtheorem{lem}{Lemma}
\newtheorem{cor}{Corollary}
\newtheorem{rem}{Remark}
\newtheorem{ass}{Assumption}
\begin{document}

\title[Shuffling Momentum Gradient Algorithm for Convex Optimization]{Shuffling Momentum Gradient Algorithm for Convex Optimization}








\author[1]{\fnm{Trang H.} \sur{Tran}}\email{htt27@cornell.edu}

\equalcont{Corresponding author}

\author[2]{\fnm{Quoc} \sur{Tran-Dinh}}\email{quoctd@email.unc.edu}

\author[3]{\fnm{Lam M.} \sur{Nguyen}}\email{LamNguyen.MLTD@ibm.com}

\affil[1]{\orgdiv{School of Operations Research and Information Engineering}, \orgname{Cornell University}, \orgaddress{Ithaca, NY, USA}}

\affil[2]{\orgdiv{Department of Statistics and Operations Research}, \orgname{The University of North Carolina at Chapel Hill}, \orgaddress{Chapel Hill, NC, USA}}

\affil[3]{\orgdiv{IBM Research}, \orgname{Thomas J. Watson Research Center}, \orgaddress{Yorktown Heights, NY, USA}}


\abstract{ 
The Stochastic Gradient Descent method (SGD) and its stochastic variants have become methods of choice for solving finite-sum optimization problems arising from machine learning and data science thanks to their ability to handle large-scale applications and big datasets.
In the last decades, researchers have made substantial effort to study the theoretical performance of SGD and its shuffling variants.
However, only limited work has investigated its shuffling momentum variants, including shuffling heavy-ball momentum schemes for non-convex problems and Nesterov's momentum for convex settings. 
In this work, we extend the analysis of the shuffling momentum gradient method developed in \cite[Tran et al (2021)]{smg_tran21b} to both finite-sum convex and strongly convex optimization problems. 
We provide the first analysis of shuffling momentum-based methods for the strongly convex setting, attaining a convergence rate of $\Ocal(1/nT^2)$, where $n$ is the number of samples and $T$ is the number of training epochs. 
Our analysis is a state-of-the-art, matching the best rates of existing shuffling stochastic gradient algorithms in the literature. 
}

\keywords{Shuffling gradient method, momentum technique, convergence rates, finite-sum convex optimization}



\maketitle


\section{Introduction}\label{sec:intro}
We consider the following common  finite-sum convex optimization problem:
\begin{align}\label{ERM_problem_01}
    \min_{w \in \mathbb{R}^d} \Big\{ F(w) := \frac{1}{n} \sum_{i=1}^n f(w ; i)  \Big\},
\end{align}
where $f(\cdot; i) : \R^d\to\R$ is a given smooth and convex function for $i \in [n] := \set{1,\cdots,n}$. 
Throughout this paper, we assume that the solution set of \eqref{ERM_problem_01} is nonempty.

Problem \eqref{ERM_problem_01} can be viewed as a sample average approximation of a stochastic optimization problem and frequently arises in machine learning tasks such as linear least-squares, logistic regression, and multi-kernel learning. 
The primary challenge in addressing \eqref{ERM_problem_01} stems from the high-dimensional setting and/or involving a large training dataset, often corresponding to the number of component functions $n$. 
Consequently, deterministic methods that require full gradient evaluations are typically inefficient for solving this type of problems \citep{sra2012optimization, Bottou2018}.

\textbf{Stochastic gradient method and its shuffling variants.}
The stochastic gradient descent method (SGD) \cite{RM1951} and its stochastic first-order variants have become a preferred optimization technique for solving \eqref{ERM_problem_01}, due to their scalability and efficiency in addressing large-scale models  \citep{AdaGrad,Kingma2014,Bottou2018,Nguyen2018_sgdhogwild}.
At each iteration, SGD samples an index $i$ (or a subset of indices) uniformly from $\{1, \dots, n\}$ and utilizes the stochastic gradient $\nabla f(\cdot; i)$ to update the weights  or model parameters. 
While the uniformly independent sampling of the index $i$ plays a pivotal role in our theoretical understanding of SGD, practical heuristics often employ without-replacement sampling schemes, also known as shuffling sampling schemes.

Shuffling gradient-type methods rely on random or deterministic permutations of the index set $\{1, 2, \dots, n\}$ and apply incremental gradient updates using these permutation orders. 
A collection of such $n$ individual updates is termed an epoch or a pass over all the training data set. 
One of the most popular methods in this category is the Random Reshuffling scheme, which shuffles a new random permutation at the beginning of each epoch. 
Other notable methods include the Single Shuffling (using one random permutation for each epoch) and the Incremental Gradient (employing a deterministic order of the indices) algorithms. 

Empirical studies indicate that shuffling sampling schemes often lead to faster convergence than SGD \citep{bottou2009curiously}. However, due to the absence of statistical independence, analyzing these shuffling variants is often more challenging than the identically distributed version. Recent works have shown theoretical improvements for shuffling schemes over SGD in terms of the number of epochs $T$ \citep{Gurbuzbalaban2019,haochen2019random,safran2020good,nagaraj2019sgd,rajput2020closing,nguyen2020unified,mishchenko2020random, ahn2020sgd}.
For example, in a strongly convex setting, shuffling sampling schemes enhance the convergence rate of SGD from $\Ocal(1/{T})$ to $\Ocal(1/nT^2)$, where $T$ is the number of effective data passes \citep{nguyen2020unified,mishchenko2020random, reddi2019convergence}.

\textbf{Stochastic momentum methods.} 
Since the success of SGD for large-scale stochastic optimization observed in the machine learning community, researchers have actively studied stochastic variants of momentum methods as these methods also rely on first-order information akin to SGD \citep{NIPS_hu2009,NIPS_cotter2011,AdaGrad,Kingma2014}. 
Extensive research has been conducted to studying stochastic heavy-ball method \citep{Polyak1964,pmlr-v28-sutskever13}, Nesterov's accelerated gradient \cite{Nesterov1983, Nesterov2004,NIPS_hu2009, NIPS_cotter2011,JMLR_yuan2016} and other adaptive step-size variants \citep{AdaGrad,Kingma2014, jin2022convergence, liu2019variance}. 
However, the majority of these works often consider uniformly sample data, with the assumption of unbiased gradients. There have been very few work in the shuffling literature focusing on the heavy-ball momentum version of SGD for the non-convex settings \cite{smg_tran21b} and Nesterov's momentum in the convex settings \cite{Tran2022_ShufflingNesterov}. 

\textbf{Contributions.} 
In this paper, we revisit the Shuffling Momentum Gradient (SMG) algorithm developed in our previous work \cite{smg_tran21b}.
This scheme can be viewed as a variant of the Shuffling SGD algorithm with the application of an anchor momentum. 
While our prior work provides the analyses of SMG for non-convex settings, we investigate this algorithm here for both convex and strongly convex settings, filling the gap between the two worlds. 
These analyses help broaden and enhance our understanding of shuffling gradient methods and their momentum variants across diverse settings. 
It is worth noting that our analysis is the first work that analyzes shuffling momentum schemes for a strongly convex setting, attaining the convergence rate of $\Ocal(1/nT^2)$, where $n$ is the number of samples and $T$ is the number of training epochs. This analysis, along with our convex assumption, matches the state-of-the-art convergence rate of Shuffling SGD algorithm in both strongly convex and merely convex settings. 

\textbf{Related work.} 
Let us review various related works to our method and our analysis in this paper, especially works on shuffling gradient and momentum algorithms.

\textbf{\textit{Shuffling Gradient Methods.}}
In the big data era, Random Reshuffling have been more favorable than plain SGD due to their superior practical performance and straightforward implementation \citep{bottou2009curiously,bottou2012stochastic, Recht2011}. The key challenge in the theoretical analysis of randomized shuffling schemes is the absence of conditionally unbiased gradients: $\E \left [\nabla f(y_i^{(t)}; \pi_i^{(t)}) \right] \neq \nabla F(y_i^{(t)})$, where $t$ is the current epoch. 
Recent advancements in the analysis of shuffling techniques include \citep{Gurbuzbalaban2019,haochen2019random,safran2020good,nagaraj2019sgd, ahn2020sgd}. 
Most of these works focus on the strongly convex case, assuming either a bounded gradient or a bounded domain. The best-known convergence rate in this scenario is $\Ocal(1/(nT)^2 + 1/(nT^3))$, where $T$ is the number of epochs. 
This result matches the lower bound rate in \citep{safran2020good} up to a constant factor.

In the convex regime, there have been early attempts to analyze the deterministic Incremental Gradient scheme \citep{nedic2001convergence,nedic2001incremental}. 
More recent works explore convergence theory for various shuffling schemes \citep{shamir2016without,mishchenko2020random,nguyen2020unified}, with \citet{nguyen2020unified} offering a unified approach to different shuffling schemes and proving a convergence rate of $\Ocal(1/ T^{2/3})$. 
When a randomized scheme is applied (Random Reshuffling or Single Shuffling), the bound in expectation improves to $\Ocal(1/T + 1/(n^{1/3} T^{2/3}))$. 
In this paper, we conduct a convex analysis of Algorithm \ref{sgd_momentum_shuffling_01} (see Section~\ref{sec:algo}) with the convergence rate of $\Ocal(1/(n^{1/3} T^{2/3}))$ under standard assumptions for the same setting.

\textbf{\textit{Shufling Momentum Methods.}}
While significant progress has been made in analyzing the shuffling variant of SGD, there has been limited work on the shuffled adaptation of well-known momentum methods, such as the heavy ball method or adaptive step size of Adam-type algorithms. 
The first work attempting to analyze shuffling momentum methods is our previous work \cite{smg_tran21b}, which studies the heavy ball version in a simple nonconvex setting under a bounded gradient assumption. 
\cite{smg_tran21b} introduces a variant of the heavy ball momentum which aligns well with shuffling methods, named by the SMG algorithm. 
SMG has convergence guarantees in various nonconvex settings, including a bounded variance case and different data shuffling strategies. 
In the convex regime, the work \cite{Tran2022_ShufflingNesterov} suggests the application of Nesterov's accelerated momentum technique for shuffling gradient methods and then demonstrates an improved deterministic convergence rate of $\Ocal(1/T)$ for unified shuffling schemes. 

Alternatively, a popular line of research focuses on variance reduction techniques, which have demonstrated promising performance for convex optimization (e.g., SAG \citep{SAG}, SAGA \citep{SAGA}, SVRG \citep{SVRG}, and SARAH \citep{Nguyen2017sarah}). Typically, these methods involve computing or storing a complete gradient or a substantial batch of gradients which is a crucial step in minimizing variance.  
Since the updates of SGD, Shuffling SGD, and the SMG algorithm do not compute the full gradient at any stage, consequently, the SMG algorithm aligns with the category of shuffling SGD-based methods, which is different from variance reduction methods.

\section{Shuffling Momentum Gradient Algorithm}\label{sec:algo}
In this section, we briefly describe our Shuffling Momentum Gradient (SMG Algorithm) from solving \eqref{ERM_problem_01}.
This method was first developed in our work \cite{smg_tran21b} for a nonconvex setting. 
In this paper, we apply it to solve both the convex and strongly convex settings.
Though the algorithm is the same as in \cite{smg_tran21b}, its convergence analysis is completely different and new.
Let us first present our SMG as in Algorithm~\ref{sgd_momentum_shuffling_01} below.

\begin{algorithm}[hpt!]
\caption{SMG Algorithm - Shuffling Momentum Gradient from \cite{smg_tran21b}}
\label{sgd_momentum_shuffling_01}
\begin{algorithmic}[1]
\State {Initialization:} 
Choose $\tilde{w}_0 \in\R^d$ and set $\tilde{m}_0 := \textbf{0}$.{\!\!}
\For{$t := 1,2,\cdots,T $}
\State Set $w_0^{(t)} := \tilde{w}_{t-1}$; $m_0^{(t)} := \tilde{m}_{t-1}$; and $v_0^{(t)} := \textbf{0}$. \label{alg:A1_step1}
\State Generate an arbitrarily deterministic or random permutation  $\pi^{(t)}$ of $[n]$. \label{alg:A1_step2}
\For{$i := 0,\cdots,n-1$}
\State Query $ g_{i}^{(t)} := \nabla f ( w_{i}^{(t)} ; \pi^{(t)} ( i + 1 ) )$. \label{alg:A1_step3}
\State Choose $\eta^{(t)}_i := \frac{\eta_t}{n}$ and update \label{alg:A1_step4}
\begin{equation}\label{eq:main_update}
\left\{\begin{array}{lcl}
m_{i+1}^{(t)} & := & \beta m_{0}^{(t)} + (1-\beta) g_{i}^{(t)} \vspace{1ex}\nonumber\\
v_{i+1}^{(t)} & := &  v_{i}^{(t)} + \frac{1}{n} g_{i}^{(t)} \vspace{1ex}\nonumber\\
w_{i+1}^{(t)} & := & w_{i}^{(t)} - \eta_i^{(t)} m_{i+1}^{(t)}.
\end{array}\right.
\end{equation}
\EndFor
\State Set $\tilde{w}_t := w_{n}^{(t)}$ and $\tilde{m}_t := v_{n}^{(t)}$. \label{alg:A1_step5}
\EndFor
\State\textbf{Output:} 
Choose $\hat{w}_T \in \{ \tilde{w}_{0},\cdots,\tilde{w}_{T-1}\}$ at random with probability $\mathbb{P}[\hat{w}_T = \tilde{w}_{t-1}] = \frac{\eta_t}{\sum_{t=1}^{T} \eta_t}$.
\label{alg:A1_step6}
\end{algorithmic}
\end{algorithm}

Let us highlight the following points.
In contrast to existing momentum methods in the literature, where the momentum term $m_i^{(t)}$ is recursively updated as $m_{i+1}^{(t)} := \beta m_i^{(t)} + (1-\beta) g_i^{(t)}$ for $\beta \in (0, 1)$, our SMG  adopts a different approach from ``anchored methods". It keeps the first constant term $m_0^{(i)}$ in the primary update at each epoch. 
This momentum term (or the anchor point) is updated solely at the end of each epoch by averaging all the gradient components $\{ g_{i}^{(t)}\}_{i=0}^{n-1}$ evaluated during that epoch. 
To avoid storing $n$ terms $g_i^{(t)}$ at each epoch, SMG employs an auxiliary variable $v_i^{(t)}$ to store the gradient average.
The application of this new momentum update is based on two observations.
Firstly, updating the momentum after each epoch aids with the shuffling data schemes where summing over an epoch replicate the full gradient. 
Secondly, $m_0^{(t)}$ represents an equal-weighted average of all past gradients within an epoch, which is different from the traditional momentum with exponential decay weights. 

It is worth noting that the SMG algorithm reduces to the standard shuffling gradient method \citep{nguyen2020unified,mishchenko2020random} when $\beta = 0$. 
In our analysis, we use $\eta_i^{(t)} = \frac{\eta_t}{n}$ in Algorithm~\ref{sgd_momentum_shuffling_01}, which is consistent with prior analysis for shuffling gradient methods, e.g., in  \citep{nguyen2020unified,smg_tran21b,Nguyen2022_SGDPL,Tran2022_ShufflingNesterov}. 
Our step size matches the step size in \cite{mishchenko2020random} with the same order of training samples $n$ in the corresponding settings and state-of-the-art results. 
We discuss more details of our learning rate $\eta_t$ in our theoretical analysis below. 

Note that Algorithm~\ref{sgd_momentum_shuffling_01} works with any permutation $\pi^{(t)}$ of $\{1,2,\cdots, n\}$, including deterministic and randomized ones. 
Hence, it covers a wide range of shuffling methods.
Our early work \cite{smg_tran21b} analyzes most of results for unified shuffling schemes for nonconvex problems which includes incremental, single shuffling, and randomized reshuffling variants as special cases. 
Note that the convergence results for the randomized reshuffling variant is usually better than the general ones due to the fact that randomized reshuffling allows for independence between the epochs of the algorithm, thus leads to better bounds in expectation of the gradient variance at the solution \cite{mishchenko2020random, nguyen2020unified, smg_tran21b}. Their analyses involving the gradient variance are often similar. As a consequence, in this paper, we only present the results for randomized reshuffling, the unified results for other variants follows in a similar manner. 


\section{{Technical Assumptions and Key Bounds}}\label{sec:assumptions}
In this section, we present the technical assumptions and prove necessary key bounds used in our subsequent analyses. 

\subsection{Technical Assumptions}\label{subsec:assumptions}
\begin{ass}[\textbf{Smoothness}]\label{as:A1}
For each $i \in [n]$, the component function $f(\cdot; i)$ is $L$-smooth, i.e. there exists a universal smoothness constant $L > 0$ such that, for all $w, w' \in \dom{F}$, it holds that
\begin{equation}\label{eq:Lsmooth_basic}
\norms{ \nabla f(w;i) - \nabla f(w';i)} \leq L \norms{ w - w'}. 
\end{equation}
\end{ass}  
The $L$-smoothness \eqref{eq:Lsmooth_basic} assumption is standard in the analyses of gradient-type methods for both stochastic and deterministic algorithms. From this assumption, we have the following bound for any $w, w' \in \dom{F}$ \citep{Nesterov2004}:
\begin{align*}
F(w) \leq F(w') + \langle\nabla F(w'), w-w'\rangle + \frac{L}{2}\norms{w-w'}^2. 
\end{align*}

Since we study a convex setting of \eqref{ERM_problem_01}, we impose the following assumption.
\begin{ass}[Convexity]
\label{ass_convex}
{For each $i \in [n]$, the component function $f(\cdot; i)$ of \eqref{ERM_problem_01} is proper and convex, i.e.:
\begin{align*}
f(w;i) \geq f(\hat{w}; i) + \langle\nabla f(\hat{w};i), w - \hat{w}\rangle,\quad \forall w, \hat{w} \in \dom{f(\cdot; i)}.
\end{align*}
}
\end{ass}
In addition, we also investigate a strongly convex case of \eqref{ERM_problem_01} as stated in the following assumption. While individual convexity is needed for our analysis, we do not requires the strongly convexity of individual functions.
\begin{ass}[$\mu$-strong convexity]
\label{ass_stronglyconvex}
The objective function $F$ of \eqref{ERM_problem_01}  is $\mu$-strongly convex on $\dom{F}$, i.e. there exists a constant $\mu \in (0, +\infty)$ such that
\begin{align*}
F(w) \geq F(\hat{w}) + \langle\nabla F(\hat{w}), w - \hat{w}\rangle + \frac{\mu}{2}\norms{w - \hat{w}}^2,\quad \forall w, \hat{w} \in \dom{F}.
\end{align*}
\end{ass}
\subsection{Basic Notations}\label{subsec:notations}
The key derivations of our analysis consider the setting where $f(\cdot;i)$ is $L$-smooth and convex for all $i \in [n]$. 
Before stating our results, we need some basic notations. 
Together with the conventional term $g_{i}^{(t)} := \nabla f ( w_{i}^{(t)} ; \pi^{(t)} ( i + 1 ) )$  for gradient, we denote the following terms:
\begin{align*}
    & f_i^{(t)} (\cdot) := f ( \cdot  ; \pi^{(t)} ( i + 1 ) ),  \\
    & D_i^{(t)} (w_1, w_2)  := D_{f_i^{(t)}} (w_1, w_2) = f_i^{(t)} (w_1)- f_i^{(t)} (w_2) -  \langle \nabla f_i^{(t)} (w_2),  w_1 - w_2 \rangle ,
\end{align*}
where $f_i^{(t)} (\cdot)$ is the component function that has index $\pi^{(t)} ( i + 1 ) $ (chosen at the outer iteration $t$ and the inner iteration $i$), and $ D_i^{(t)} (w_1, w_2)$ is the Bregman
divergence between $w_1$ and $w_2$ associated with $f_i^{(t)}$.
If $f(\cdot;i)$ is $L$-smooth and convex, then for all $w_1, w_2 \in \mathbb{R}^d$, we have the following inequality \citep{Nesterov2004}:
\begin{align*}
    f(w_1; i) \geq f(w_2; i) + \langle\nabla f(w_2; i), w_1 - w_2\rangle + \frac{1}{2 L}\norms{\nabla f(w_2; i) - \nabla f(w_1; i)}^2,
\end{align*}
which leads to 
\begin{align}
    D_i^{(t)} (w_1, w_2)  &\geq \frac{1}{2 L}\norms{\nabla f_i^{(t)}(w_1) - \nabla f_i^{(t)}(w_2)}^2, \quad \forall w_1, w_2 \in \mathbb{R}^d, \label{eq_div_1} \\
    D_{i}^{(t)}(w_1, w_2) &\leq \frac{L}{2} \left\|w_1- w_2\right\|^2, \quad  \forall w_1, w_2 \in \mathbb{R}^d.\label{eq_div_2}
\end{align}
In our analysis below, we repeatedly use the following quantity, which defines the total Bregman divergence between $w_*$ and $w_{j}^{(t)}$ in each epoch $t$, i.e.
\begin{align}
    C_0 := 0, \quad C_t := \sum_{j=0}^{n-1} D_{j}^{(t)}(w_*; w_{j}^{(t)}), \quad  t \geq 1. \label{define_C} 
\end{align}
Another quantity is the following sequence of expected values:
\begin{align}
    F_t :=\beta \cdot \E[C_{t-1}] + (1-\beta) \cdot \E[C_t] \quad  t \geq 1. \label{define_F} 
\end{align}
Since the Algorithm \ref{sgd_momentum_shuffling_01} sets $\tilde{m}_0 := \textbf{0}$ at the first iteration, the analysis of the first iteration is slightly different from other iterations, e.g., $F_t = (1-\beta) \cdot \E[C_t]$ for $t=1$ and we adopt the convention that $C_0 = 0$. Finally, let $w_{*}$ be an optimal solution of \eqref{ERM_problem_01}.
We define the variance $\sigma^2$ of $F$ at $w_{*}$ as follows:
\begin{align}\label{defn_finite}
\sigma^2 := \frac{1}{n}\sum_{i=1}^{n}\Vert \nabla{f}(w_{*}; i) \Vert^2  \in [0, +\infty).
\end{align}
\subsection{Key Derivations}\label{subsec:derivations}

Now, we are ready to state our key bound used in our subsequence analysis.
In fact, the following lemma bounds the quantity of interest, which is the expected objective residual $\E \left[ F( \tilde{w}_{t}) - F(w_*)\right]$ based on the expected distance $\E \left[\|\tilde{w}_{t-1} - w_*\|^2 \right]$ from the current iterate to an optimal solution and the quantity $F_t$ defined by \eqref{define_F}.

\begin{lem}
\label{lem_smg_convex}
Suppose that Assumption \ref{as:A1} and Assumption~\ref{ass_convex} holds for \eqref{ERM_problem_01}.
Let $\{w_i^{(t)}\}_{t=1}^{T}$ be generated by Algorithm \ref{sgd_momentum_shuffling_01} with a fixed momentum parameter $0\leq \beta < 1$ and an epoch learning rate $\eta_i^{(t)} := \frac{\eta_t}{n}$ for $t \geq 1$. 
Assume that $0 < \eta_t \leq \frac{1}{2L\sqrt{K}}$ for $t \geq 1$, where $K \geq 1$. {Suppose further that a randomized reshuffling strategy is used in Algorithm \ref{sgd_momentum_shuffling_01}. }
Then, for $t \geq 2$, we have the following bound:
\begin{align}\label{eq:key_bound1}
     &2 \eta_t \E \left[ F( \tilde{w}_{t}) - F(w_*)\right] \leq  \E \left[\|\tilde{w}_{t-1} - w_*\|^2 \right] -  \E \left[\left\|\tilde{w}_{t}- w_*\right\|^2  \right] - \frac{2\eta_t}{n} F_t \nonumber\\
     &\quad + \frac{2\eta_t}{n} \frac{F_t}{K}  + \frac{2 \beta \eta_t}{n} \frac{ F_{t-1} }{K}  + \frac{4L\sigma^2 }{3n}  \beta(1-\beta)  \eta_t\eta_{t-1}^2  +  \frac{4L\sigma^2}{3n} (1-\beta)^2\eta_t^3.
\end{align}
If $t=1$, then we have: 
\begin{align*}
    2 \eta_t(1-\beta)\E \left[ F( \tilde{w}_{t} ) - F(w_*) \right]  &\leq  \E \left[\|\tilde{w}_{t-1} - w_*\|^2 \right] -  \E \left[\left\| \tilde{w}_{t} - w_*\right\|^2  \right] - \frac{2\eta_t}{n} F_t  \nonumber\\
    & \quad + (1-\beta) \frac{2\eta_t}{n} \frac{1}{K} F_t +  \frac{4L\sigma^2}{3n} (1-\beta)^2\eta_t^3.
\end{align*}
\end{lem}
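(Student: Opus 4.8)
The plan is to derive a per-epoch ``almost-descent'' inequality for $\|\tilde w_t - w_*\|^2$ and then read off \eqref{eq:key_bound1}. First I would rewrite one epoch in closed form: since $\eta_i^{(t)}=\eta_t/n$ and $m_{i+1}^{(t)}=\beta\tilde m_{t-1}+(1-\beta)g_i^{(t)}$ (using $m_0^{(t)}=\tilde m_{t-1}$), the inner loop is $w_{i+1}^{(t)}=w_i^{(t)}-\tfrac{\eta_t}{n}u_i^{(t)}$ with $u_i^{(t)}:=\beta\tilde m_{t-1}+(1-\beta)g_i^{(t)}$, while $\tilde m_{t-1}=v_n^{(t-1)}=\tfrac1n\sum_{j=0}^{n-1}g_j^{(t-1)}$ and $\tilde m_0=\mathbf 0$. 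Summing, $w_i^{(t)}=\tilde w_{t-1}-\tfrac{\eta_t}{n}\sum_{k=0}^{i-1}u_k^{(t)}$ and $\tilde w_t=\tilde w_{t-1}-\tfrac{\eta_t}{n}\sum_{i=0}^{n-1}u_i^{(t)}$. I would also record the two facts that make the randomized reshuffling useful: $\sum_{i=1}^n\nabla f(w_*;i)=n\nabla F(w_*)=\mathbf 0$, and for a uniform permutation $\pi$ the partial sums of the zero-mean family $\{\nabla f(w_*;i)\}$ satisfy $\E\big\|\sum_{k=0}^{r-1}\nabla f(w_*;\pi(k+1))\big\|^2=\tfrac{r(n-r)}{n(n-1)}\sum_{i=1}^n\|\nabla f(w_*;i)\|^2=\tfrac{r(n-r)}{n(n-1)}\,n\sigma^2$.

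Next I would expand $\|\tilde w_t-w_*\|^2$ over the pass. Telescoping the single-step identity $\|w_{i+1}^{(t)}-w_*\|^2=\|w_i^{(t)}-w_*\|^2-\tfrac{2\eta_t}{n}\langle u_i^{(t)},w_i^{(t)}-w_*\rangle+\tfrac{\eta_t^2}{n^2}\|u_i^{(t)}\|^2$ gives $\|\tilde w_t-w_*\|^2=\|\tilde w_{t-1}-w_*\|^2-\tfrac{2\eta_t}{n}\sum_i\langle u_i^{(t)},w_i^{(t)}-w_*\rangle+\tfrac{\eta_t^2}{n^2}\sum_i\|u_i^{(t)}\|^2$, so everything reduces to lower-bounding the cross-term sum. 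I would split $u_i^{(t)}=g_i^{(t)}-\beta(g_i^{(t)}-\tilde m_{t-1})$. For the pure-gradient part, the exact Bregman identity $\langle g_i^{(t)},w_i^{(t)}-w_*\rangle=f_i^{(t)}(w_i^{(t)})-f_i^{(t)}(w_*)+D_i^{(t)}(w_*,w_i^{(t)})$, together with $\sum_i f_i^{(t)}(w_*)=nF(w_*)$, the definition \eqref{define_C} of $C_t$, and the companion identity $\sum_i f_i^{(t)}(w_i^{(t)})=nF(\tilde w_t)+\sum_i\langle g_i^{(t)},w_i^{(t)}-\tilde w_t\rangle-\sum_i D_i^{(t)}(\tilde w_t,w_i^{(t)})$, produces $n\big(F(\tilde w_t)-F(w_*)\big)+C_t$ plus intra-epoch correction terms. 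For the momentum correction $-\beta\sum_i\langle g_i^{(t)}-\tilde m_{t-1},w_i^{(t)}-w_*\rangle$ I would peel off the displacement $w_i^{(t)}-\tilde w_{t-1}$, use $\sum_i g_i^{(t)}=n\tilde m_t$ and $\sum_i\tilde m_{t-1}=n\tilde m_{t-1}$, and then expand $\langle\tilde m_t,\tilde w_{t-1}-w_*\rangle$ and $\langle\tilde m_{t-1},\tilde w_{t-1}-w_*\rangle$ through the same Bregman identity along epochs $t$ and $t-1$; this is precisely where $C_{t-1}$ enters, so that after taking expectations the residual $C$-content is $(1-\beta)\E[C_t]+\beta\E[C_{t-1}]=F_t$ (and $F_1=(1-\beta)\E[C_1]$ because $\tilde m_0=\mathbf 0$).

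It then remains to absorb the leftover terms: the intra-epoch inner products $\langle g_i^{(t)},w_i^{(t)}-\tilde w_{t-1}\rangle$, $\langle g_i^{(t)},w_i^{(t)}-\tilde w_t\rangle$, $\langle g_j^{(t-1)},w_j^{(t-1)}-\tilde w_{t-1}\rangle$; the quadratic term $\tfrac{\eta_t^2}{n^2}\sum_i\|u_i^{(t)}\|^2$; and the divergences $D_i^{(t)}(\tilde w_t,w_i^{(t)})$. I would bound each displacement by $\tfrac{\eta_t}{n}$ (resp.\ $\tfrac{\eta_{t-1}}{n}$) times a partial sum of the $u$'s, use \eqref{eq_div_2} for the $D_i^{(t)}(\tilde w_t,w_i^{(t)})$ terms, use Cauchy--Schwarz/Young on the inner products, and finally convert gradient norms via \eqref{eq_div_1} in the form $\|g_i^{(t)}\|^2\le 4L\,D_i^{(t)}(w_*,w_i^{(t)})+2\|\nabla f_i^{(t)}(w_*)\|^2$ (and its $t-1$ analogue), which sends everything back to $C_t$, $C_{t-1}$ and $\sigma^2$. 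Each such remainder carries a factor $L\eta_t^2$ or $L\eta_t\eta_{t-1}$, and the hypothesis $\eta_t\le\tfrac{1}{2L\sqrt K}$ gives $4L^2\eta_t^2\le 1/K$; this is exactly what turns the $C_t$/$C_{t-1}$ parts of the remainders into the discounted terms $\tfrac{2\eta_t}{n}\tfrac{F_t}{K}$ and $\tfrac{2\beta\eta_t}{n}\tfrac{F_{t-1}}{K}$. Taking total expectation and invoking the without-replacement bound above on the $\|\nabla f(w_*;\cdot)\|$-parts of the partial sums — summed over $r=0,\dots,n-1$ this contributes $\tfrac{n(n+1)}{6}\sigma^2$, which against the $\eta_t^3/n^2$ and $\eta_t\eta_{t-1}^2/n^2$ prefactors yields precisely the $\tfrac{4L\sigma^2}{3n}(1-\beta)^2\eta_t^3$ and $\tfrac{4L\sigma^2}{3n}\beta(1-\beta)\eta_t\eta_{t-1}^2$ terms — gives \eqref{eq:key_bound1} for $t\ge2$. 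For $t=1$ the same computation runs with $\tilde m_0=\mathbf 0$, i.e.\ $u_i^{(1)}=(1-\beta)g_i^{(1)}$: all $C_{t-1}$ and $\eta_{t-1}$ contributions disappear, the effective step size is $(1-\beta)\eta_1$, and one obtains the stated first-epoch inequality.

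The delicate step is the momentum correction in the second paragraph: because $\beta\tilde m_{t-1}$ couples epoch $t$ to epoch $t-1$, one must arrange the estimates so that exactly the amount $-\tfrac{2\eta_t}{n}\beta\,\E[C_{t-1}]$ survives to complete $F_t$, while the $F(\tilde w_t)-F(w_*)$ term keeps its intended coefficient and the cross-epoch cross terms collapse into the controllable $1/K$- and $\sigma^2/n$-remainders; getting the numerical constants to line up (rather than $\Ocal(\cdot)$ versions) is where the bookkeeping is heaviest. Equally important is to route every $\|\nabla f(w_*;\cdot)\|$-dependent quantity through a partial-sum-over-a-random-permutation so that the sharp $\tfrac{r(n-r)}{n(n-1)}$ factor — hence the $1/n$ gain — is available; a crude use of $\sum_{i=1}^n\|\nabla f(w_*;i)\|^2=n\sigma^2$ would only yield an $\sigma^2$ (not $\sigma^2/n$) term.
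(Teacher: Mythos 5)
Your high-level bookkeeping (the Bregman identity for the cross terms, the emergence of $(1-\beta)C_t+\beta C_{t-1}=F_t$ from coupling epoch $t$ to epoch $t-1$ through $\tilde m_{t-1}$, and routing the $\nabla f(\cdot;w_*)$-content through the without-replacement partial-sum variance $\tfrac{r(n-r)}{n(n-1)}n\sigma^2$) matches the machinery the paper actually uses in its Lemmas on $B_i^{(t)}$ and the two Bregman-divergence sums. However, your very first structural choice creates a genuine gap. By telescoping the \emph{single-step} identity you are forced to carry the quadratic remainder $\tfrac{\eta_t^2}{n^2}\sum_{i=0}^{n-1}\|u_i^{(t)}\|^2$ on the unfavorable side of the inequality. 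This is a sum of \emph{individual} squared norms, so the permutation cancellation you invoke elsewhere is unavailable: even at the optimum, $\sum_{i=0}^{n-1}\|\nabla f(w_*;\pi^{(t)}(i+1))\|^2=n\sigma^2$ deterministically, so this remainder is irreducibly of order $(1-\beta)^2\eta_t^2\sigma^2/n$. The lemma's entire variance budget is $\tfrac{4L\sigma^2}{3n}(1-\beta)^2\eta_t^3+\tfrac{4L\sigma^2}{3n}\beta(1-\beta)\eta_t\eta_{t-1}^2=O(L\eta_t^3\sigma^2/n)$, which is smaller by the factor $L\eta_t\le\tfrac{1}{2\sqrt K}$. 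So your route cannot produce the stated bound, and if carried through it would degrade the final rate in Theorem~\ref{thm_smg_convex} from $\Ocal(n^{-1/3}T^{-2/3})$ back to an SGD-type rate. (The $C_t$-part of the quadratic term is fine --- it is absorbed into the $F_t/K$ slack --- it is specifically the $\|\nabla f(w_*;\cdot)\|^2$ part that breaks.)

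The paper avoids this entirely by expanding over the whole epoch \emph{backwards}: it writes $\tilde w_{t-1}=w_0^{(t)}=w_n^{(t)}+\tfrac{\eta_t}{n}\sum_{i}\bigl(\beta g_i^{(t-1)}+(1-\beta)g_i^{(t)}\bigr)$ and expands $\|\tilde w_{t-1}-w_*\|^2$ around $w_n^{(t)}$. Then the quadratic term $\bigl\|\tfrac{\eta_t}{n}\sum_i(\cdots)\bigr\|^2$ sits on the favorable side of the resulting $\geq$ and is simply dropped, and the cross term is evaluated at the single point $w_n^{(t)}-w_*$, so the Bregman identity directly produces $2\eta_t\bigl(F(\tilde w_t)-F(w_*)\bigr)$, $C_t$ and $C_{t-1}$ with no need for your ``companion identity'' at the intermediate iterates. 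The only remainders are $\sum_i D_i^{(t)}(w_n^{(t)};w_i^{(t)})$ and $\sum_i D_i^{(t-1)}(w_n^{(t)};w_i^{(t-1)})$, each of which carries an explicit $L\eta^2$ prefactor and is where the partial-sum variance bound enters (Lemmas~\ref{lem_bound_term_1} and~\ref{lem_bound_term_2}); the condition $4L^2\eta_t^2\le 1/K$ then converts their $F_t$, $F_{t-1}$ content into the $\tfrac{2\eta_t}{n}\tfrac{F_t}{K}$ and $\tfrac{2\beta\eta_t}{n}\tfrac{F_{t-1}}{K}$ terms. To repair your argument you would need to replace the per-step telescoping with a whole-epoch expansion (forward or backward) so that the second-order term is the norm of a \emph{sum} of gradients rather than a sum of norms.
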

Since the proof of Lemma~\ref{lem_smg_convex} is relatively technical, we defer it to the appendix. 
In the next section, we will present our main theoretical results for two settings: convex and strongly convex cases. 

\section{Theoretical Convergence Results}\label{sec:analysis}
We analyze the convergence of Algorithm~\ref{sgd_momentum_shuffling_01} in the two settings: merely convex and strongly convex cases as follows.

\subsection{Convex Case: Main Result and Consequences}
In this subsection, we consider the setting where $f(\cdot;i)$ is $L$-smooth and convex for all $i \in [n]$. 
The following theorem  states the convergence of Algorithm~\ref{sgd_momentum_shuffling_01} for this case.

\begin{thm}
\label{thm_smg_convex}
Suppose that Assumption \ref{as:A1} and Assumption~\ref{ass_convex} hold for \eqref{ERM_problem_01}.
Let $\{w_i^{(t)}\}_{t=1}^{T}$ be generated by Algorithm \ref{sgd_momentum_shuffling_01} with a fixed momentum parameter $0\leq \beta < 1$ and an epoch learning rate $\eta_i^{(t)} := \frac{\eta_t}{n}$ for  $t \geq 1$. 
Suppose further that a randomized reshuffling strategy is used in Algorithm \ref{sgd_momentum_shuffling_01}. 
Assume that $\eta_t \leq \alpha \eta_{t-1}$ for some $ \alpha > 0$
and $0 < \eta_t \leq \frac{1}{2L\sqrt{K}}$ for $t \geq 1$, where $K := {1+ \alpha \beta}$. 
Then, we have the following bound:
\begin{align} \label{eq_thm_smg_convex}
    \E \left[ F( \hat{w}_T ) - F(w_*) \right] 
    \leq \frac{\E \left[\|\tilde{w}_{0} - w_*\|^2 \right]}{2 (1-\beta)\sum_{t=1}^T \eta_t}  +\frac{4L\sigma^2 (1+ \alpha \beta)}{6n  (1-\beta)}   \frac{\sum_{t=1}^T \eta_t^3  }{\sum_{t=1}^T \eta_t}.
\end{align}

\end{thm}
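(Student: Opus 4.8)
The plan is to derive \eqref{eq_thm_smg_convex} by summing the one-epoch estimate of Lemma~\ref{lem_smg_convex} over $t=1,\dots,T$, telescoping the distance terms, controlling the accumulated $F_t$-terms with the choice $K=1+\alpha\beta$, and finally invoking the randomized output rule. First, since $0\le\beta<1$ and $F(\tilde w_t)-F(w_*)\ge0$, I weaken the left-hand side of \eqref{eq:key_bound1} so that $2(1-\beta)\eta_t\,\E[F(\tilde w_t)-F(w_*)]$ is bounded by its right-hand side for every $t\ge1$ (the $t=1$ inequality already has this form). Summing over $t$, the bracket $\E[\|\tilde w_{t-1}-w_*\|^2]-\E[\|\tilde w_t-w_*\|^2]$ telescopes and is bounded by $\E[\|\tilde w_0-w_*\|^2]$, which will supply the first term of \eqref{eq_thm_smg_convex}.

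The crux is to show that the accumulated $F_t$-terms are nonpositive. With $K=1+\alpha\beta$ one has $1-\tfrac1K=\tfrac{\alpha\beta}{K}$, so on the right-hand side of \eqref{eq:key_bound1} the $F$-terms collapse to $\tfrac{2\beta\eta_t}{nK}\big(F_{t-1}-\alpha F_t\big)$ for $t\ge2$, while the $t=1$ contribution equals $-\tfrac{2\beta\eta_1(1+\alpha)}{nK}F_1$. Shifting the index in the $F_{t-1}$ terms and collecting, the total coefficient of $F_s$ equals $\tfrac{2\beta}{nK}(\eta_{s+1}-\alpha\eta_s)$ for $2\le s\le T-1$, equals $\tfrac{2\beta}{nK}\big(\eta_2-(1+\alpha)\eta_1\big)$ for $s=1$ (after absorbing the $t=1$ term), and equals $-\tfrac{2\alpha\beta}{nK}\eta_T$ for $s=T$. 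Because $\eta_{s+1}\le\alpha\eta_s$ by hypothesis and each $F_s\ge0$ (Bregman divergences are nonnegative, so $C_t,F_t\ge0$), all these coefficients are $\le0$, hence the $F_t$-contribution to the sum is $\le0$. This is exactly the point where the prescribed value $K=1+\alpha\beta$ and the ratio condition $\eta_t\le\alpha\eta_{t-1}$ must interlock, and I expect this bookkeeping — together with the $t=1$ boundary term carrying the extra $(1-\beta)$ and the $(1+\alpha)$ factors — to be the main obstacle.

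It remains to bound the $\sigma^2$-terms: summed over $t$, they equal $\tfrac{4L\sigma^2}{3n}\big(\beta(1-\beta)\sum_{t\ge2}\eta_t\eta_{t-1}^2+(1-\beta)^2\sum_{t\ge1}\eta_t^3\big)$, and using $\eta_t\eta_{t-1}^2\le\alpha\eta_{t-1}^3$ (again from $\eta_t\le\alpha\eta_{t-1}$), re-indexing $\sum_{t\ge2}\eta_{t-1}^3\le\sum_{t\ge1}\eta_t^3$, and noting $\alpha\beta+1-\beta\le1+\alpha\beta$, this is at most $\tfrac{4L\sigma^2}{3n}(1-\beta)(1+\alpha\beta)\sum_{t=1}^T\eta_t^3$. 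Combining the three estimates gives
\[
2(1-\beta)\sum_{t=1}^{T}\eta_t\,\E\big[F(\tilde w_t)-F(w_*)\big]\;\le\;\E\big[\|\tilde w_0-w_*\|^2\big]+\tfrac{4L\sigma^2}{3n}(1-\beta)(1+\alpha\beta)\sum_{t=1}^{T}\eta_t^3 .
\]
Since $\hat w_T$ is drawn from $\{\tilde w_0,\dots,\tilde w_{T-1}\}$ with weights proportional to the $\eta_t$, the left-hand side is, by the definition of $\hat w_T$, equal to $2(1-\beta)\big(\sum_{t=1}^T\eta_t\big)\,\E[F(\hat w_T)-F(w_*)]$; dividing by $2(1-\beta)\sum_{t=1}^T\eta_t$ and using $1-\beta\le1$ on the variance term yields \eqref{eq_thm_smg_convex}.
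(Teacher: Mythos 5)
Your proposal is correct and follows essentially the same route as the paper's proof: sum the bound of Lemma~\ref{lem_smg_convex} over $t$, telescope the distance terms, use $K=1+\alpha\beta$ together with $\eta_t\le\alpha\eta_{t-1}$ and $F_t\ge 0$ to make the accumulated $F_t$-contribution nonpositive, bound the variance terms by $\frac{4L\sigma^2}{3n}(1+\alpha\beta)\sum_t\eta_t^3$, and divide by $2(1-\beta)\sum_t\eta_t$ via the weighted output rule. The only difference is cosmetic: you track the coefficient of each $F_s$ individually, whereas the paper extends the partial sums to the full range and observes that the aggregate coefficient $-\frac{2}{n}+\frac{2(1+\alpha\beta)}{nK}$ vanishes.
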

Theorem \ref{thm_smg_convex} only provides an upper bound on $\E \left[ F( \hat{w}_T ) - F(w_*) \right]$, and we have not yet seen a concrete convergence rate of Algorithm~\ref{sgd_momentum_shuffling_01}. 
We discuss the convergence rate in the subsequent corollaries for different choices of the learning rate.

\begin{proof}
First, summing \eqref{eq:key_bound1} from Lemma \ref{lem_smg_convex} for $t=1, 2, \cdots, T$, we have
\begin{align*}
    &2 (1-\beta)\sum_{t=1}^T \eta_t\E \left[ F( \tilde{w}_{t} ) - F(w_*) \right]  
    \leq  \E \left[\|\tilde{w}_{0} - w_*\|^2 \right] -  \E \left[\left\| \tilde{w}_{T} - w_*\right\|^2  \right] - \frac{2}{n} \sum_{t=1}^T \eta_tF_t \nonumber\\
     & + \frac{2}{n} \frac{1}{K} \sum_{t=2}^T\eta_t F_t + \frac{2}{n}\beta \frac{1}{K} \sum_{t=2}^T\eta_t F_{t-1} + \frac{4L\sigma^2 }{3n}  \beta(1-\beta)  \sum_{t=2}^T\eta_t\eta_{t-1}^2    \nonumber\\
    &  +  \frac{4L\sigma^2}{3n} (1-\beta)^2 \sum_{t=2}^T \eta_t^3 + (1-\beta) \frac{2}{n} \frac{1}{K}\eta_1 F_1 +  \frac{4L\sigma^2}{3n} (1-\beta)^2\eta_1^3.  
\end{align*}
Next, using the fact that $\eta_t \leq \alpha \eta_{t-1}$, we get
\begin{align*}
    &2 (1-\beta)\sum_{t=1}^T \eta_t\E \left[ F( \tilde{w}_{t} ) - F(w_*) \right]  \leq  \E \left[\|\tilde{w}_{0} - w_*\|^2 \right] -  \E \left[\left\| \tilde{w}_{T} - w_*\right\|^2  \right] - \frac{2}{n} \sum_{t=1}^T \eta_tF_t \nonumber\\
     & + \frac{2}{n} \frac{1}{K} \sum_{t=2}^T\eta_t F_t + \frac{2}{n} \alpha \beta \frac{1}{K} \sum_{t=2}^T\eta_{t-1} F_{t-1} + \frac{4L\sigma^2 }{3n} \alpha \beta(1-\beta)  \sum_{t=2}^T\eta_{t-1}^3   \nonumber\\
    &  +  \frac{4L\sigma^2}{3n} (1-\beta)^2 \sum_{t=2}^T \eta_t^3+ (1-\beta) \frac{2}{n} \frac{1}{K}\eta_1 F_1 +  \frac{4L\sigma^2}{3n} (1-\beta)^2\eta_1^3 \nonumber\\
    &\leq  \E \left[\|\tilde{w}_{0} - w_*\|^2 \right] -  \E \left[\left\| \tilde{w}_{T} - w_*\right\|^2  \right] - \frac{2}{n} \sum_{t=1}^T \eta_tF_t \nonumber\\
     & + \frac{2}{n} \frac{1}{K} \sum_{t=2}^T\eta_t F_t + \frac{2}{n} \alpha \beta \frac{1}{K} \sum_{t=1}^{T-1}\eta_{t} F_{t} + \frac{4L\sigma^2 }{3n} \alpha \beta(1-\beta)  \sum_{t=1}^{T-1}\eta_{t}^3 +  \frac{4L\sigma^2}{3n} (1-\beta)^2 \sum_{t=2}^T \eta_t^3  \nonumber\\
    &  + (1-\beta) \frac{2}{n} \frac{1}{K}\eta_1 F_1 +  \frac{4L\sigma^2}{3n} (1-\beta)^2\eta_1^3 \nonumber\\
    &\leq  \E \left[\|\tilde{w}_{0} - w_*\|^2 \right] -  \E \left[\left\| \tilde{w}_{T} - w_*\right\|^2  \right] - \frac{2}{n} \sum_{t=1}^T \eta_tF_t \nonumber\\
     & + \frac{2}{n} \frac{1}{K} \sum_{t=1}^T\eta_t F_t + \frac{2}{n} \alpha \beta \frac{1}{K} \sum_{t=1}^{T-1}\eta_{t} F_{t} + \frac{4L\sigma^2 }{3n} \alpha \beta(1-\beta)  \sum_{t=1}^{T-1}\eta_{t}^3 +  \frac{4L\sigma^2}{3n} (1-\beta)^2 \sum_{t=1}^T \eta_t^3  \nonumber\\
    &\leq  \E \left[\|\tilde{w}_{0} - w_*\|^2 \right] -  \E \left[\left\| \tilde{w}_{T} - w_*\right\|^2  \right] - \frac{2}{n} \sum_{t=1}^T \eta_tF_t \nonumber\\
     & + \frac{2}{n} \frac{1}{K} (1+ \alpha \beta) \sum_{t=1}^T\eta_t F_t  + \frac{4L\sigma^2 }{3n}  (1+ \alpha \beta) \sum_{t=1}^T \eta_t^3.  
\end{align*}
Finally, since $K = 1+ \alpha \beta$, divide both sides of the last estimate by $2 (1-\beta)\sum_{t=1}^T \eta_t$, we arrive at
\begin{align*}
   \E \left[ F( \hat{w}_T ) - F(w_*) \right] & \leq \frac{\sum_{t=1}^T \eta_t\E \left[ F( \tilde{w}_{t} ) - F(w_*) \right]}{\sum_{t=1}^T \eta_t} \nonumber\\
    &
    \leq \frac{\E \left[\|\tilde{w}_{0} - w_*\|^2 \right]}{2 (1-\beta)\sum_{t=1}^T \eta_t}  +\frac{4L\sigma^2 (1+ \alpha \beta)}{6n  (1-\beta)}   \frac{\sum_{t=1}^T \eta_t^3  }{\sum_{t=1}^T \eta_t},
\end{align*}
which exactly proves \eqref{eq_thm_smg_convex}.
\end{proof}

Compared to the analysis of shuffling SGD methods for convex problems, our shuffling momentum method only requires an additional assumption $\eta_t \leq \alpha \eta_{t-1}$ for some $ \alpha > 0$, because it needs to deal with past momentum terms. Since we only requires $\alpha$ to be independent of $T$, this condition is mild and can be enforced in most of the traditional learning rate schemes. 
Next, we derive two direct consequences of Theorem~\ref{thm_smg_convex} that  demonstrate the two well-known step size schemes.

\begin{cor}[\textbf{Constant learning rate}]\label{co:constant_LR}
Let us fix the number of epochs $T \geq 1$, and  choose a constant learning rate $\eta_t := \eta = \frac{\gamma n^{1/3}}{T^{1/3}}$ for some $\gamma > 0$ such that $\frac{\gamma n^{1/3}}{T^{1/3}} \leq \frac{1}{2L\sqrt{1+\beta}}$ for $t\geq 1$ in Algorithm~\ref{sgd_momentum_shuffling_01}.
Then, under the conditions of Theorem~\ref{thm_smg_convex}, we have
\begin{align}\label{eq:convergence_rate1}
    & \E \left[ F( \hat{w}_T ) - F(w_*) \right]
    \leq  \frac{1}{ n^{1/3} T^{2/3}}\left( \frac{\E \left[\|\tilde{w}_{0} - w_*\|^2 \right]}{2 (1-\beta)\gamma }  +\frac{4L\sigma^2 (1+ \beta)\gamma^2 }{6  (1-\beta)}    \right).
\end{align}
Consequently, the convergence rate of Algorithm~\ref{sgd_momentum_shuffling_01} is $\mathcal{O}\Big( \frac{1}{n^{1/3}T^{2/3}} \Big)$.
\end{cor}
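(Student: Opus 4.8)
The plan is to specialize Theorem~\ref{thm_smg_convex} to the constant step size $\eta_t \equiv \eta = \gamma n^{1/3}/T^{1/3}$, so the argument is essentially a substitution followed by elementary simplification. First I would check that this choice meets the hypotheses of the theorem. Since $\eta_t = \eta_{t-1}$ for every $t$, the condition $\eta_t \le \alpha\eta_{t-1}$ holds with $\alpha = 1$; hence the abstract constant becomes $K = 1 + \alpha\beta = 1+\beta$, and the imposed bound $\gamma n^{1/3}/T^{1/3} \le \tfrac{1}{2L\sqrt{1+\beta}}$ is precisely $0 < \eta_t \le \tfrac{1}{2L\sqrt{K}}$, as required by Theorem~\ref{thm_smg_convex}.

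Next I would evaluate the two ratios on the right-hand side of \eqref{eq_thm_smg_convex} for the constant sequence: $\sum_{t=1}^T \eta_t = T\eta$ and $\sum_{t=1}^T \eta_t^3 = T\eta^3$, so that $\tfrac{1}{\sum_{t=1}^T \eta_t} = \tfrac{1}{T\eta}$ and $\tfrac{\sum_{t=1}^T \eta_t^3}{\sum_{t=1}^T \eta_t} = \eta^2$. Substituting these values together with $\alpha = 1$ into \eqref{eq_thm_smg_convex} yields
\[
\E\left[F(\hat w_T) - F(w_*)\right] \le \frac{\E\left[\|\tilde w_0 - w_*\|^2\right]}{2(1-\beta)\,T\eta} + \frac{4L\sigma^2(1+\beta)}{6n(1-\beta)}\,\eta^2.
\]

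Then I would plug in $\eta = \gamma n^{1/3}/T^{1/3}$: in the first term $T\eta = \gamma n^{1/3}T^{2/3}$, while in the second term $\eta^2/n = \gamma^2 n^{2/3}/(nT^{2/3}) = \gamma^2/(n^{1/3}T^{2/3})$. Factoring out the common $1/(n^{1/3}T^{2/3})$ gives exactly the bound \eqref{eq:convergence_rate1}. Finally, since the quantity inside the parentheses in \eqref{eq:convergence_rate1} depends only on $L$, $\sigma^2$, $\beta$, $\gamma$, and $\E[\|\tilde w_0 - w_*\|^2]$ — none of which grows with $n$ or $T$ — the asserted $\mathcal{O}\big(1/(n^{1/3}T^{2/3})\big)$ rate follows immediately.

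There is no genuine obstacle here: the corollary is a direct algebraic consequence of Theorem~\ref{thm_smg_convex}, and the only point deserving a word of care is the identification of the theorem's abstract $K$ with the concrete value $K = 1+\beta$, which follows from noting that a constant sequence satisfies $\eta_t \le \alpha\eta_{t-1}$ with the choice $\alpha = 1$.
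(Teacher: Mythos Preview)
Your proposal is correct and follows essentially the same approach as the paper: identify $\alpha=1$ and $K=1+\beta$ for the constant step size, substitute $\sum_{t=1}^T\eta_t = T\eta$ and $\sum_{t=1}^T\eta_t^3/\sum_{t=1}^T\eta_t = \eta^2$ into \eqref{eq_thm_smg_convex}, and then plug in $\eta = \gamma n^{1/3}/T^{1/3}$. If anything, your version is slightly more explicit in verifying the hypotheses of Theorem~\ref{thm_smg_convex}, but the argument is the same.
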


\begin{proof} 
Since the learning rate $\eta_t$ is constant, we can choose $\alpha = 1$ and $K = 1+ \beta$ in Theorem \ref{thm_smg_convex}. 
From \eqref{eq_thm_smg_convex} of Theorem \ref{thm_smg_convex}, we obtain 
\begin{align*}
    & \E \left[ F( \hat{w}_T ) - F(w_*) \right] 
    \leq \frac{\E \left[\|\tilde{w}_{0} - w_*\|^2 \right]}{2 (1-\beta)T \eta}  +\frac{4L\sigma^2 (1+ \beta)}{6n  (1-\beta)} \eta^2 
.\end{align*}
Now, substituting $\eta = \frac{\gamma n^{1/3}}{T^{1/3}}$ into the last inequality, we get
\begin{align*}
    \E \left[ F( \hat{w}_T ) - F(w_*) \right]
    &\leq \frac{\E \left[\|\tilde{w}_{0} - w_*\|^2 \right]}{2 (1-\beta)\gamma n^{1/3} T^{2/3}}  +\frac{4L\sigma^2 (1+ \beta)}{6n  (1-\beta)} \frac{\gamma^2 n^{2/3}}{T^{2/3}} \nonumber\\
    &\leq \frac{1}{ n^{1/3} T^{2/3}}\left( \frac{\E \left[\|\tilde{w}_{0} - w_*\|^2 \right]}{2 (1-\beta)\gamma }  +\frac{4L\sigma^2 (1+ \beta)\gamma^2 }{6  (1-\beta)}    \right),
\end{align*}
which proves \eqref{eq:convergence_rate1}.
\end{proof}

Next, we consider an exponential scheduled learning rate. For a given epoch budget $T \geq 1$, and two positive constants $\gamma > 0$ and $\rho > 0$, we consider the following exponential learning rate, see, e.g. \citep{li2020exponential}:
\begin{align}\label{exponential_learning_rate_01} 
    \eta_t :=  \eta_0 \alpha^t , \quad \text{where}\ \alpha := \rho^{1/T} \in (0, 1).
\end{align}
Then, the following corollary states a convergence rate of Algorithm~\ref{sgd_momentum_shuffling_01} using this learning rate without any additional assumption.

\begin{cor}[\textbf{Exponential scheduled learning rate}]\label{co:exponential_LR}
Let us fix the number of epochs $T \geq 1$, and  choose a diminishing learning rate $\eta_t := \eta_0 \alpha^t =  \frac{\gamma n^{1/3}}{T^{1/3}} \alpha^t $ for $\alpha := \rho^{1/T} \in (0, 1)$ and some $\gamma > 0$ such that $\frac{\gamma \alpha n^{1/3}}{T^{1/3}} \leq \frac{1}{2L\sqrt{1+ \alpha \beta}}$ in Algorithm~\ref{sgd_momentum_shuffling_01}.
Then, under the conditions of Theorem~\ref{thm_smg_convex}, we have
\begin{align}\label{eq:convergence_rate2} 
    \E \left[ F( \hat{w}_T ) - F(w_*) \right] 
    \leq  \frac{1}{ n^{1/3} T^{2/3}}\left( \frac{\E \left[\|\tilde{w}_{0} - w_*\|^2 \right]}{2 (1-\beta) \gamma \rho }  +\frac{4L\sigma^2 (1+ \alpha \beta)\gamma^2}{6  (1-\beta)\rho} \right).
\end{align}
Consequently, the convergence rate of Algorithm~\ref{sgd_momentum_shuffling_01} is {$\mathcal{O}\Big( \frac{1}{ n^{1/3}T^{2/3}} \Big)$}.
\end{cor}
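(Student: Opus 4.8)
The plan is to apply Theorem~\ref{thm_smg_convex} to the geometric schedule $\eta_t = \eta_0\alpha^t$ with $\eta_0 = \gamma n^{1/3}/T^{1/3}$ and $\alpha = \rho^{1/T}\in(0,1)$, and then reduce the right-hand side of \eqref{eq_thm_smg_convex} to the claimed form by elementary estimates on geometric sums.

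First I would check that the hypotheses of Theorem~\ref{thm_smg_convex} hold for this choice. Since $\eta_t = \eta_0\alpha^t = \alpha\cdot(\eta_0\alpha^{t-1}) = \alpha\eta_{t-1}$, the requirement $\eta_t\le\alpha\eta_{t-1}$ is satisfied (with equality), so the constant is exactly $K = 1+\alpha\beta$. Because $\alpha\in(0,1)$, the sequence $\{\eta_t\}_{t\ge1}$ is decreasing in $t$, hence $\max_{t\ge1}\eta_t = \eta_1 = \eta_0\alpha = \gamma\alpha n^{1/3}/T^{1/3}$, and the standing assumption $\gamma\alpha n^{1/3}/T^{1/3}\le 1/(2L\sqrt{1+\alpha\beta})$ is precisely the condition $0<\eta_t\le 1/(2L\sqrt{K})$ for all $t\ge1$. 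Thus \eqref{eq_thm_smg_convex} applies verbatim.

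Second I would bound the two sums appearing in \eqref{eq_thm_smg_convex}. For the denominator, since $\alpha^t\ge\alpha^T=\rho$ for every $1\le t\le T$, we get $\sum_{t=1}^T\eta_t = \eta_0\sum_{t=1}^T\alpha^t \ge \eta_0\,T\rho = \gamma\rho\, n^{1/3}T^{2/3}$, which controls the first term of \eqref{eq_thm_smg_convex} by $\E[\|\tilde w_0-w_*\|^2]/(2(1-\beta)\gamma\rho\, n^{1/3}T^{2/3})$. For the ratio in the second term, using $\eta_t\le\eta_0$ for $t\ge1$ gives $\eta_t^3\le\eta_0^2\eta_t$, so $\sum_{t=1}^T\eta_t^3\le\eta_0^2\sum_{t=1}^T\eta_t$ and therefore $\big(\sum_{t=1}^T\eta_t^3\big)/\big(\sum_{t=1}^T\eta_t\big)\le\eta_0^2 = \gamma^2 n^{2/3}/T^{2/3}$. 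Substituting both estimates into \eqref{eq_thm_smg_convex}, simplifying the powers of $n$ in the second term ($n^{2/3}/n = n^{-1/3}$), and using $1/\rho\ge1$ to write both error terms under a common factor $1/\rho$ yields \eqref{eq:convergence_rate2}; reading off the $n,T$ dependence gives the $\Ocal(1/(n^{1/3}T^{2/3}))$ rate.

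There is no serious obstacle here; the corollary is essentially a specialization of Theorem~\ref{thm_smg_convex}. The only two points requiring a little care are (i) confirming that the step-size cap is attained at $t=1$ so that the single assumption on $\gamma$ secures $\eta_t\le 1/(2L\sqrt{K})$ for every epoch, and (ii) the geometric-sum inequality $\sum_{t=1}^T\alpha^t\ge T\alpha^T = T\rho$, which is exactly where the factor $\rho$ enters the final bound and explains why an exponentially decaying schedule still attains the same $\Ocal(1/(n^{1/3}T^{2/3}))$ order as the constant schedule of Corollary~\ref{co:constant_LR}, up to the constant $1/\rho$.
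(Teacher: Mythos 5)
Your proposal is correct and follows essentially the same route as the paper: verify that $\eta_t=\alpha\eta_{t-1}$ and $\eta_t\le\eta_1\le 1/(2L\sqrt{K})$ so Theorem~\ref{thm_smg_convex} applies with $K=1+\alpha\beta$, then bound the geometric sums using $\alpha^t\ge\alpha^T=\rho$ and substitute $\eta_0=\gamma n^{1/3}/T^{1/3}$. The only (immaterial) difference is in the ratio term: you use $\eta_t^3\le\eta_0^2\eta_t$ to get the ratio $\le\eta_0^2$ and then insert $1/\rho\ge1$, whereas the paper bounds numerator and denominator separately via $\alpha^{3t}\le\alpha^3$ and $\alpha^t\ge\rho$; both yield the stated constant.
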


\begin{proof}

We observe that $\eta_t = \alpha \eta_{t-1}$ and $\eta_t \leq \eta_1 \leq \frac{1}{2L\sqrt{1+ \alpha \beta}} = \frac{1}{2L\sqrt{K}}$. 
Utilizing \eqref{eq_thm_smg_convex} from Theorem \ref{thm_smg_convex}, we have
\begin{align*}
    \E \left[ F( \hat{w}_T ) - F(w_*) \right] &\leq \frac{\E \left[\|\tilde{w}_{0} - w_*\|^2 \right]}{2 (1-\beta)\sum_{t=1}^T \eta_0 \alpha^t}  +\frac{4L\sigma^2 (1+ \alpha \beta)}{6n  (1-\beta)}   \frac{\sum_{t=1}^T \eta_0^3 \alpha^{3t}  }{\sum_{t=1}^T \eta_0 \alpha^t} \nonumber\\
    &\leq \frac{\E \left[\|\tilde{w}_{0} - w_*\|^2 \right]}{2 (1-\beta)\sum_{t=1}^T \eta_0 \alpha^T}  +\frac{4L\sigma^2 (1+ \alpha \beta)}{6n  (1-\beta)}   \frac{\sum_{t=1}^T \eta_0^3 \alpha^{3}  }{\sum_{t=1}^T \eta_0 \alpha^T} \nonumber\\
    &\leq \frac{\E \left[\|\tilde{w}_{0} - w_*\|^2 \right]}{2 (1-\beta)\rho T\eta_0}  +\frac{4L\sigma^2 (1+ \alpha \beta)}{6n  (1-\beta)\rho}  \eta_0^2 
.\end{align*}
Now, let $\eta_0 =  \frac{\gamma n^{1/3}}{T^{1/3}}$, the last inequality leads to
\begin{align*}
    \E \left[ F( \hat{w}_T ) - F(w_*) \right] 
    &\leq \frac{\E \left[\|\tilde{w}_{0} - w_*\|^2 \right]}{2 (1-\beta)\rho T^{2/3}\gamma n^{1/3}}  +\frac{4L\sigma^2 (1+ \alpha \beta)}{6n  (1-\beta)\rho} \frac{\gamma^2 n^{2/3}}{T^{2/3}} \nonumber\\
    &\leq  \frac{1}{ n^{1/3} T^{2/3}}\left( \frac{\E \left[\|\tilde{w}_{0} - w_*\|^2 \right]}{2 (1-\beta) \gamma \rho }  +\frac{4L\sigma^2 (1+ \alpha \beta)\gamma^2}{6  (1-\beta)\rho}   \right),
\end{align*}
which proves \eqref{eq:convergence_rate2}.
\end{proof}

\begin{rem}[Convergence Rates]
With a randomized reshuffling strategy, the convergence rate of Algorithm~\ref{sgd_momentum_shuffling_01} in a smooth and convex setting remains $\mathcal{O}( n^{-1/3}T^{-2/3})$ for the two common learning rates. 
This rate matches the state-of-the-art rate for shuffling SGD methods in a general convex setting as in \cite{mishchenko2020random,nguyen2020unified}. 
If we do not reshuffle the permutations for every epoch, then the rate for an arbitrary permutation is $\mathcal{O}(T^{-2/3})$, which is worse than the randomized reshuffling scheme with a factor of $n^{1/3}$. 
Since the analysis for unified shuffling schemes is similar and straightforward compared to our randomized analysis of Theorem \ref{thm_smg_convex}, we skip these proofs here.

\end{rem}
\begin{rem}[Learning Rate Schedules]
Our convergence results can be applied to other learning rates schedule, including diminishing learning rate (e.g. step sizes diminishes with some order of $t$) \cite{nguyen2020unified} and cosine scheduled learning rates \cite{loshchilov10sgdr,li2020exponential}. Similar to the nonconvex setting \cite{smg_tran21b}, we typically need that all the step size are lower bounded and upper bounded, following some specific order of $T$ (or $t$), which these bounds can be obtained from the design of the learning rate schedule. For convex settings, we will need an additional mild condition i.e. $\eta_t \leq \alpha \eta_{t-1}$ for some $ \alpha > 0$. While these applications are possible e.g., by considering the factor $\alpha$, in this manuscript we omit such long technical details.
\end{rem}

\subsection{Strongly Convex Case: Main Result and Consequences}
In this subsection, we present an analysis for the case when the objective function is strongly convex and each component function is convex.
\begin{thm}
\label{thm_smg_st_convex}

Suppose that Assumptions \ref{as:A1}, \ref{ass_convex}, and \ref{ass_stronglyconvex} hold for \eqref{ERM_problem_01}. 
Suppose further that a randomized reshuffling strategy is used  in Algorithm \ref{sgd_momentum_shuffling_01}. 
Let $\{w_i^{(t)}\}_{t=1}^{T}$ be generated by Algorithm \ref{sgd_momentum_shuffling_01} with a fixed momentum parameter $0\leq \beta < 1$ and an epoch learning rate $\eta_i^{(t)} := \frac{\eta_t}{n}$ for $t \geq 1$. 
Assume that $\eta_t \leq \alpha \eta_{t-1}$ for some $ \alpha > 0$
and $0 < \eta_t \leq \frac{1}{2L\sqrt{K}}$ for $t \geq 1$, where $K = 1 + \alpha\beta (1 +\mu \max_t \eta_t)$. 
Then, it holds that
\begin{align} \label{eq_thm_smg_st_convex}
    \E \left[\left\|\tilde{w}_{T}- w_*\right\|^2  \right] 
    &\leq  \frac{\E \left[\left\|\tilde{w}_{0}- w_*\right\|^2 \right]}{(1-\beta) \prod_{t=1}^{T} (1+\mu\eta_t)}
     +   \frac{4L\sigma^2}{3n}  \sum_{j=1}^T  \eta_j^3\frac{ K-\beta }{\prod_{t=j}^{T} (1+\mu\eta_t)}.
\end{align}

\end{thm}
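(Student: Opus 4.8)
The plan is to feed the key bound of Lemma~\ref{lem_smg_convex} into a contractive one-step recursion by invoking strong convexity, and then to telescope that recursion against the product weights $P_t := \prod_{s=1}^{t}(1+\mu\eta_s)$ (with $P_0 := 1$) that appear in the denominators of \eqref{eq_thm_smg_st_convex}. First I would note that since $F$ is smooth and $w_*$ is a minimizer, $\nabla F(w_*) = 0$, so Assumption~\ref{ass_stronglyconvex} yields $F(\tilde w_t) - F(w_*) \ge \tfrac{\mu}{2}\|\tilde w_t - w_*\|^2$. Substituting this into the left-hand side of \eqref{eq:key_bound1} and moving the resulting $\mu\eta_t\E[\|\tilde w_t - w_*\|^2]$ term across, for $t\ge 2$ I obtain
\[
(1+\mu\eta_t)\,\E[\|\tilde w_t - w_*\|^2] \le \E[\|\tilde w_{t-1} - w_*\|^2] - \tfrac{2\eta_t}{n}\bigl(1-\tfrac1K\bigr)F_t + \tfrac{2\beta\eta_t}{nK}F_{t-1} + N_t,
\]
where $N_t := \tfrac{4L\sigma^2}{3n}\bigl[\beta(1-\beta)\eta_t\eta_{t-1}^2 + (1-\beta)^2\eta_t^3\bigr]$ collects the two $\sigma^2$-terms; the same substitution into the $t=1$ display of Lemma~\ref{lem_smg_convex} gives $(1+\mu\eta_1(1-\beta))\,\E[\|\tilde w_1 - w_*\|^2] \le \E[\|\tilde w_0 - w_*\|^2] - \tfrac{2\eta_1}{n}\bigl(1-\tfrac{1-\beta}{K}\bigr)F_1 + N_1$.

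Next I would telescope: multiply the $t\ge 2$ inequality by $P_{t-1}$ and sum over $t=2,\dots,T$, obtaining $P_T\E[\|\tilde w_T - w_*\|^2]$ bounded by $P_1\E[\|\tilde w_1 - w_*\|^2]$ plus a collection of $F$-terms plus $\sum_{t=2}^{T}P_{t-1}N_t$. For the anomalous first epoch, divide the $t=1$ inequality by $1-\beta$ and use the elementary bound $(1-\beta)(1+\mu\eta_1)\le 1+\mu\eta_1(1-\beta)$ (their difference is exactly $\beta$), which upgrades it to $P_1\E[\|\tilde w_1 - w_*\|^2]\le \tfrac{1}{1-\beta}\bigl(\E[\|\tilde w_0 - w_*\|^2] - \tfrac{2\eta_1}{n}(1-\tfrac{1-\beta}{K})F_1 + N_1\bigr)$; this is precisely where the $\tfrac{1}{1-\beta}$ factor of \eqref{eq_thm_smg_st_convex} enters. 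Plugging this into the telescoped chain leaves $P_T\E[\|\tilde w_T - w_*\|^2] \le \tfrac{1}{1-\beta}\E[\|\tilde w_0 - w_*\|^2] + (\text{all }F\text{-terms}) + \tfrac{N_1}{1-\beta}+\sum_{t=2}^{T}P_{t-1}N_t$.

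The crux is cancelling the $F_t$-terms. I would re-index the positive sum $\sum_{t=2}^{T}\tfrac{2\beta\eta_t}{nK}P_{t-1}F_{t-1}$ via $s=t-1$ and bound $\eta_{s+1}\le\alpha\eta_s$ together with $P_s\le(1+\mu\max_t\eta_t)P_{s-1}$; since $K-1 = \alpha\beta(1+\mu\max_t\eta_t)$, this turns the sum into $\le\sum_{s=1}^{T-1}\tfrac{2\eta_s}{n}(1-\tfrac1K)P_{s-1}F_s$. The terms $s=2,\dots,T-1$ cancel the negative terms $-\tfrac{2\eta_t}{n}(1-\tfrac1K)P_{t-1}F_t$ for $t=2,\dots,T-1$ exactly; the $t=T$ negative term is discarded because $F_T\ge 0$ (Bregman divergences are nonnegative); and the leftover $s=1$ term combines with the first-epoch term $-\tfrac{2\eta_1}{n}(\tfrac{1}{1-\beta}-\tfrac1K)F_1$ to produce $\tfrac{2\eta_1}{n}(1-\tfrac{1}{1-\beta})F_1 = -\tfrac{2\beta\eta_1}{n(1-\beta)}F_1\le 0$, which is also discarded. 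Hence every $F$-term vanishes. This step is the main obstacle: one must line up the index shift, the multiplicative growth of the weights $P_t$, and the extra $1-\beta$ of the first epoch so that all residual coefficients land with the correct sign — and it is exactly this bookkeeping that forces $K = 1+\alpha\beta(1+\mu\max_t\eta_t)$ instead of the $K=1+\alpha\beta$ of the convex case.

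Finally I would tidy up the noise. Writing $\tfrac{N_1}{1-\beta}+\sum_{t=2}^{T}P_{t-1}N_t$, I use $\eta_t\eta_{t-1}^2\le\alpha\eta_{t-1}^3$, re-index $s=t-1$, and $P_s\le(1+\mu\max_t\eta_t)P_{s-1}$ to rewrite the $\eta_t\eta_{t-1}^2$-contribution as $\sum_{s=1}^{T-1}(K-1)(1-\beta)\eta_s^3P_{s-1}$; collecting the coefficient of $\eta_j^3P_{j-1}$ for each $j$ gives $K(1-\beta)$ at $j=1$, $(1-\beta)(K-\beta)$ for $2\le j\le T-1$, and $(1-\beta)^2$ at $j=T$, each of which is $\le K-\beta$ using only $K\ge 1$ and $0\le 1-\beta\le 1$. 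Thus the noise is at most $\tfrac{4L\sigma^2}{3n}(K-\beta)\sum_{j=1}^{T}\eta_j^3P_{j-1}$. Dividing the whole inequality by $P_T$ and using $P_{j-1}/P_T = 1/\prod_{t=j}^{T}(1+\mu\eta_t)$ yields exactly \eqref{eq_thm_smg_st_convex}.
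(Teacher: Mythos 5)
Your proposal is correct and follows essentially the same route as the paper's proof: both convert the key bound of Lemma~\ref{lem_smg_convex} into the contraction $(1+\mu\eta_t)\E[\|\tilde w_t - w_*\|^2]\le \E[\|\tilde w_{t-1}-w_*\|^2]+H_t$, telescope against the weights $\prod_s(1+\mu\eta_s)$, re-index the $F_{t-1}$ and $\eta_t\eta_{t-1}^2$ terms via $\eta_t\le\alpha\eta_{t-1}$ and the growth of the weights so that the choice $K=1+\alpha\beta(1+\mu\max_t\eta_t)$ makes every $F$-coefficient nonpositive, and absorb the first-epoch factor $1+\mu\eta_1(1-\beta)\ge(1-\beta)(1+\mu\eta_1)$ into the $\tfrac{1}{1-\beta}$ of the final bound. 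The only differences are cosmetic bookkeeping (forward summation with $P_{t-1}$ versus the paper's backward unrolling with $\phi_t$, and handling the $1-\beta$ factor at the start rather than at the end).
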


The proof of Theorem~\ref{thm_smg_st_convex} relies on the key bound \eqref{eq:key_bound1} of Lemma \ref{lem_smg_convex} and the fact that $F( \tilde{w}_{t}) - F(w_*) \geq \frac{\mu}{2} \|\tilde{w}_{t} - w_* \|^2$  due to the $\mu$-strong convexity of $F$. 
By unrolling the recursion bound on the term $\E \left[\left\|\tilde{w}_{t}- w_*\right\|^2  \right] $, we get \eqref{eq_thm_smg_st_convex}.
The proof of Theorem~\ref{thm_smg_st_convex} is in the appendix. 
Next, we derive a convergence rate of Algorithm~\ref{sgd_momentum_shuffling_01} for the two common learning rates.
Corollary \ref{co:constant_LR_2} considers the most simple case of constant step sizes.

\begin{cor}[\textbf{Constant learning rate}]\label{co:constant_LR_2}
Let us fix the number of epochs $T \geq 1$, and  choose a constant learning rate $\eta_t := \eta =\frac{\gamma \log(n^{1/2} T)}{T} $ for some $\gamma > 0$ such that $\frac{\gamma \log(n^{1/2} T)}{T} \leq \frac{1}{2L\sqrt{K}}$ for $K = 1 + \beta (1 +\mu \eta)$ and $t\geq 1$ in Algorithm~\ref{sgd_momentum_shuffling_01}.
Then, under the conditions of Theorem~\ref{thm_smg_st_convex}, we have
\begin{align*}
\hspace{-1ex}
    \E \left[\left\|\tilde{w}_{T}- w_*\right\|^2  \right] 
    &\leq  \frac{1}{nT^2} \left(\frac{\E \big[\left\|\tilde{w}_{0}- w_*\right\|^2 \big]}{(1-\beta)} e^{- {\mu \gamma}/{2}}
    +\frac{4L\sigma^2 \gamma^2 (\log(n^{1/2} T))^2 \left( 1 +  \beta \right)}{3\mu} \right).
\end{align*}
Consequently, the convergence rate of Algorithm~\ref{sgd_momentum_shuffling_01} is $\tilde{\Ocal}(\frac{1}{nT^2})$.
\end{cor}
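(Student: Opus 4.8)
The plan is to specialize the master recursion \eqref{eq_thm_smg_st_convex} of Theorem~\ref{thm_smg_st_convex} to the constant step size $\eta_t \equiv \eta = \frac{\gamma\log(n^{1/2}T)}{T}$ and then control the two resulting terms separately. Because all step sizes are equal, the ratio condition $\eta_t\le\alpha\eta_{t-1}$ holds with $\alpha=1$, so the quantity in Theorem~\ref{thm_smg_st_convex} is $K = 1+\beta(1+\mu\eta)$, and the standing hypotheses are met precisely thanks to the assumed bound $\eta\le\frac{1}{2L\sqrt{K}}$. Moreover, all the products in \eqref{eq_thm_smg_st_convex} collapse to powers: $\prod_{t=1}^{T}(1+\mu\eta_t)=(1+\mu\eta)^T$ and $\prod_{t=j}^{T}(1+\mu\eta_t)=(1+\mu\eta)^{T-j+1}$, so the bound reads $\E[\|\tilde w_T-w_*\|^2]\le \frac{\E[\|\tilde w_0-w_*\|^2]}{(1-\beta)(1+\mu\eta)^T}+\frac{4L\sigma^2(K-\beta)}{3n}\,\eta^3\sum_{k=1}^{T}(1+\mu\eta)^{-k}$, after reindexing $k:=T-j+1$.

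For the variance (second) term I would bound the geometric sum by $\sum_{k=1}^{T}(1+\mu\eta)^{-k}\le\sum_{k=1}^{\infty}(1+\mu\eta)^{-k}=\frac{1}{\mu\eta}$, which turns that term into $\frac{4L\sigma^2(K-\beta)\eta^2}{3n\mu}$. Here $K-\beta = 1+\beta\mu\eta\le 1+\beta$, using that $\mu\le L$ (since $F$ is simultaneously $L$-smooth and $\mu$-strongly convex) together with $\eta\le\frac{1}{2L\sqrt{K}}$ forces $\mu\eta\le\frac12<1$. Substituting $\eta=\frac{\gamma\log(n^{1/2}T)}{T}$ then produces exactly $\frac{1}{nT^2}\cdot\frac{4L\sigma^2\gamma^2(\log(n^{1/2}T))^2(1+\beta)}{3\mu}$, the second summand in the claimed bound.

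For the bias (first) term the task is to upper bound $(1+\mu\eta)^{-T}$. Since $\mu\eta\le\frac12$, the elementary inequality $e^{x/2}\le 1+x$ (valid for $x$ in a bounded interval around $0$) gives $(1+\mu\eta)^{-T}\le e^{-\mu\eta T/2}=e^{-\frac{\mu\gamma}{2}\log(n^{1/2}T)}=(n^{1/2}T)^{-\mu\gamma/2}$; this is exactly why the step size is inflated by the logarithmic factor $\log(n^{1/2}T)$ — it converts the geometric decay factor into a polynomial one. Choosing $\gamma$ large enough relative to $1/\mu$ (compatible with $\eta\le\frac{1}{2L\sqrt{K}}$ once $T$ is large enough for that constraint to be the binding one) makes $\frac{\mu\gamma}{2}\log(n^{1/2}T)\ge 2\log(n^{1/2}T)+\frac{\mu\gamma}{2}$, hence $(1+\mu\eta)^{-T}\le (n^{1/2}T)^{-2}e^{-\mu\gamma/2}=\frac{e^{-\mu\gamma/2}}{nT^2}$. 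Plugging this and the variance estimate into \eqref{eq_thm_smg_st_convex} yields the displayed inequality, and since $(\log(n^{1/2}T))^2$ is only a logarithmic factor the rate is $\tilde{\Ocal}(1/(nT^2))$.

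The main obstacle is precisely this last step: turning the geometric factor $(1+\mu\eta)^{-T}$ into the polynomial $1/(nT^2)$. This is the familiar subtlety of constant-step-size analyses in the strongly convex regime — a genuinely constant step size cannot deliver a $1/(nT^2)$ bound, which is why $\eta$ is scaled by $\log(n^{1/2}T)$ and why only a log-suppressed $\tilde{\Ocal}$ rate is obtained. Everything else (the geometric-series bound and the substitutions) is routine bookkeeping once one commits to tracking the $e^{-\mu\eta T/2}$ bound on the product and checking that the step-size restriction together with $\mu\le L$ keeps $\mu\eta$ small.
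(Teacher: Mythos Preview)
Your argument mirrors the paper's proof exactly: set $\alpha=1$, specialize \eqref{eq_thm_smg_st_convex} to constant $\eta$, bound $(1+\mu\eta)^{-T}\le e^{-\mu\eta T/2}$ via $\frac{1}{1+x}\le 1-\frac{x}{2}\le e^{-x/2}$ (equivalent to your $e^{x/2}\le 1+x$), bound the geometric sum by $1/(\mu\eta)$, use $\mu\eta\le 1$ to replace $K-\beta=1+\beta\mu\eta$ by $1+\beta$, and substitute $\eta$. You are in fact more explicit than the paper about the one delicate step---turning $e^{-\mu\eta T/2}=(n^{1/2}T)^{-\mu\gamma/2}$ into $\frac{e^{-\mu\gamma/2}}{nT^2}$---which the paper asserts without comment and which indeed tacitly requires $\mu\gamma$ to be large enough.
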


\begin{proof}
Since the learning rate is constant, we can choose $\alpha = 1$ and $K = 1 + \beta (1 +\mu \eta)$.
From \eqref{eq_thm_smg_st_convex} of Theorem \ref{thm_smg_st_convex} we have
\begin{align*}
    &\quad\E \left[\left\|\tilde{w}_{T}- w_*\right\|^2  \right] \leq  \frac{\E \left[\left\|\tilde{w}_{0}- w_*\right\|^2 \right]}{(1-\beta) (1+\mu\eta)^T}
     +   \frac{4L\sigma^2}{3n}  \sum_{j=1}^T  \eta^3\frac{ \left( 1-\beta +  \beta (1 + \mu \eta )\right)}{ (1+\mu\eta)^{T-j+1}}\nonumber\\
     &\leq  \frac{\E \left[\left\|\tilde{w}_{0}- w_*\right\|^2 \right]}{(1-\beta) } \left(1- \frac{\mu \eta}{2}\right)^T
     +   \frac{4L\sigma^2}{3n} \eta^3  \left( 1 +  \beta  \mu \eta \right)\sum_{j=1}^T  \frac{1}{ (1+\mu\eta)^{T-j+1}}\nonumber\\
     &\leq  \frac{\E \left[\left\|\tilde{w}_{0}- w_*\right\|^2 \right]}{(1-\beta)} \exp\left(- \frac{\mu \eta}{2}\right)^T
     +   \frac{4L\sigma^2}{3n} \eta^3  \frac{\left( 1 +  \beta  \mu \eta \right)}{ \mu\eta}\nonumber\\
     &\leq  \frac{\E \left[\left\|\tilde{w}_{0}- w_*\right\|^2 \right]}{(1-\beta)} \exp\left(- \frac{T\mu \eta}{2}\right)
     +   \frac{4L\sigma^2}{3n\mu} \eta^2 \left( 1 +  \beta\right),
\end{align*}
where we use the fact that $\mu \eta \leq 1$, $\frac{1}{1+ \mu \eta} \leq 1- \frac{\mu \eta}{2}$, and $1-x \leq \exp(-x)$. 
Now, applying the choice $\eta =\frac{\gamma \log(n^{1/2} T)}{T} $ into the above estimate, we can deduce 
\begin{align*}
    &\E \left[\left\|\tilde{w}_{T}- w_*\right\|^2  \right] \nonumber\\
    &\leq  \frac{\E \left[\left\|\tilde{w}_{0}- w_*\right\|^2 \right]}{(1-\beta)} \exp\left(- \frac{\mu \gamma \log(n^{1/2} T)}{2}\right)
     +   \frac{4L\sigma^2}{3nT^2\mu} \gamma^2 (\log(n^{1/2} T))^2 \left( 1 +  \beta \right)
     \nonumber\\
    &\leq  \frac{1}{nT^2} \left(\frac{\E \left[\left\|\tilde{w}_{0}- w_*\right\|^2 \right]}{(1-\beta)} e^{- {\mu \gamma}/{2}}
    +\frac{4L\sigma^2 \gamma^2 (\log(n^{1/2} T))^2 \left( 1 +  \beta \right)}{3\mu} \right),
\end{align*}
which proves our desired result.
\end{proof}

In the final Corollary \ref{co:exponential_LR_2}, we consider the exponential step size  \eqref{exponential_learning_rate_01}, which is described in the previous section. Our analysis is flexible that it allows various learning rate schedules, which attains the same convergence guarantees as the standard constant step sizes. 
\begin{cor}[\textbf{Exponential scheduled learning rate}]\label{co:exponential_LR_2}
Let us fix the number of epochs $T \geq 1$, and  choose a diminishing learning rate $\eta_t := \eta_0 \alpha^t =  \frac{\gamma \log(n^{1/2} T)}{T} \alpha^t $ for $\alpha := \rho^{1/T} \in (0, 1)$ and some $\gamma > 0$ such that $\frac{\gamma \alpha\log(n^{1/2} T)}{T}  \leq \frac{1}{2L\sqrt{K}}$ in Algorithm~\ref{sgd_momentum_shuffling_01}.
Then, under the conditions of Theorem~\ref{thm_smg_st_convex}, $\E \left[\left\|\tilde{w}_{T}- w_*\right\|^2  \right]$ is upper bounded by
\begin{align}\label{eq:convergence_result4}
    \frac{1}{nT^2} \left(\frac{\E \left[\left\|\tilde{w}_{0}- w_*\right\|^2 \right]}{(1-\beta)} e^{- {\mu \gamma \rho}/{2}}
    +   \frac{4L\sigma^2 (1 + 2\alpha\beta) \rho^2 \gamma^2 (\log(n^{1/2} T))^2 }{3 \mu }  \right).
\end{align}
Consequently, the convergence rate of Algorithm~\ref{sgd_momentum_shuffling_01} is $\tilde{\Ocal}(\frac{1}{nT^2})$.
\end{cor}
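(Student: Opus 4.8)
The plan is to specialize the recursion bound \eqref{eq_thm_smg_st_convex} of Theorem~\ref{thm_smg_st_convex} to the exponential schedule $\eta_t=\eta_0\alpha^t$ with $\alpha=\rho^{1/T}\in(0,1)$, and then to estimate the two resulting terms separately, following the template of Corollary~\ref{co:constant_LR_2}. First I would check the hypotheses of Theorem~\ref{thm_smg_st_convex}: here $\eta_t=\alpha\eta_{t-1}$, so the condition $\eta_t\le\alpha\eta_{t-1}$ holds with equality, and since $\alpha\in(0,1)$ the step sizes are decreasing, so $\eta_t\le\eta_1=\frac{\gamma\alpha\log(n^{1/2}T)}{T}\le\frac{1}{2L\sqrt{K}}$ by assumption, with $\max_t\eta_t=\eta_1$. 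I would also record the elementary bound $K-\beta=1-\beta+\alpha\beta+\alpha\beta\mu\eta_1\le 1+2\alpha\beta$, which uses $1-\beta(1-\alpha)\le 1$ together with $\mu\eta_1\le 1$ (the latter from $\eta_1\le\frac{1}{2L}$ and $\mu\le L$).

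For the contraction term, since $\eta_t\ge\eta_T=\eta_0\rho$ for every $t\le T$, every factor satisfies $1+\mu\eta_t\ge 1+\mu\eta_0\rho$, hence $\frac{1}{\prod_{t=1}^{T}(1+\mu\eta_t)}\le\frac{1}{(1+\mu\eta_0\rho)^{T}}\le\big(1-\frac{\mu\eta_0\rho}{2}\big)^{T}\le e^{-T\mu\eta_0\rho/2}$, using the same estimates $\frac{1}{1+x}\le 1-\frac{x}{2}$ and $1-x\le e^{-x}$ (for $0<x\le 1$) as in the proof of Corollary~\ref{co:constant_LR_2}. Substituting $\eta_0=\frac{\gamma\log(n^{1/2}T)}{T}$ turns this into $e^{-\mu\gamma\rho\log(n^{1/2}T)/2}$, which by the same final manipulation as in the proof of Corollary~\ref{co:constant_LR_2}, now with $\gamma\rho$ in place of $\gamma$, is at most $\frac{1}{nT^2}e^{-\mu\gamma\rho/2}$; this gives the first term of \eqref{eq:convergence_result4}.

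For the noise term $\frac{4L\sigma^2}{3n}(K-\beta)\sum_{j=1}^{T}\frac{\eta_j^3}{\prod_{t=j}^{T}(1+\mu\eta_t)}$, I would again use $\prod_{t=j}^{T}(1+\mu\eta_t)\ge(1+\mu\eta_0\rho)^{T-j+1}$ and rewrite $\eta_j^3=\eta_0^3\alpha^{3j}=\eta_0^3\rho^3\alpha^{3}\,\alpha^{-3(T-j+1)}$ (using $\alpha^{3T}=\rho^3$); with $k:=T-j+1$ the sum becomes $\eta_0^3\rho^3\alpha^3$ times the geometric series $\sum_{k=1}^{T}q^k$ with ratio $q=\frac{\alpha^{-3}}{1+\mu\eta_0\rho}$. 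The crucial point is that $q<1$: $\log q=\frac{3\log(1/\rho)}{T}-\log(1+\mu\eta_0\rho)<0$ because $\log(1+\mu\eta_0\rho)\ge\frac{\mu\eta_0\rho}{2}=\frac{\mu\gamma\rho\log(n^{1/2}T)}{2T}$ dominates $\frac{3\log(1/\rho)}{T}$ once $n$ (or $T$) is large enough, and the same estimate makes $1-q$ of order $\mu\eta_0\rho$, so $\sum_{k=1}^{T}q^k=O\big(\frac{1}{\mu\eta_0\rho}\big)$ and the whole sum is $O\big(\frac{\eta_0^2\rho^2}{\mu}\big)$. Together with $K-\beta\le 1+2\alpha\beta$ and $\eta_0=\frac{\gamma\log(n^{1/2}T)}{T}$, this produces the $\frac{1}{nT^2}\cdot\frac{4L\sigma^2(1+2\alpha\beta)\rho^2\gamma^2(\log(n^{1/2}T))^2}{3\mu}$ contribution; adding the two pieces gives \eqref{eq:convergence_result4}, and the $\tilde{\Ocal}(1/(nT^2))$ rate follows since $\rho$ and $\gamma$ are constants.

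The main obstacle is the noise sum: unlike the constant step size of Corollary~\ref{co:constant_LR_2}, the denominator $\prod_{t=j}^{T}(1+\mu\eta_t)$ is no longer a clean power of $1+\mu\eta$, so it must be lower bounded with care, and one must resist bounding $\eta_j^3$ by $\eta_1^3$ — which would destroy the $\rho^2$ factor — keeping instead $\eta_j^3=\eta_0^3\alpha^{3j}$ and paying the price of a geometric series whose ratio drops below $1$ only for $n$ or $T$ sufficiently large. As in Corollary~\ref{co:constant_LR_2}, matching the precise constants (the $e^{-\mu\gamma\rho/2}$ and the exact coefficient of the noise term) rests on the same implicit "$\gamma$ large enough / $n,T$ large enough" conventions.
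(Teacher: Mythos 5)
Your proposal follows the same template as the paper's proof: specialize \eqref{eq_thm_smg_st_convex}, use $\eta_t\ge\eta_T=\eta_0\rho$ to lower bound every factor $1+\mu\eta_t$ by $1+\mu\eta_0\rho$, convert the contraction product to $e^{-T\mu\eta_0\rho/2}$ via $\tfrac{1}{1+x}\le 1-\tfrac{x}{2}\le e^{-x/2}$, bound $K-\beta\le 1+2\alpha\beta$ using $\mu\eta_1\le 1$, and substitute $\eta_0=\gamma\log(n^{1/2}T)/T$ at the end (with the same implicit ``$\mu\gamma\rho$ large enough'' convention to turn $(n^{1/2}T)^{-\mu\gamma\rho/2}$ into $\tfrac{1}{nT^2}e^{-\mu\gamma\rho/2}$). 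The one place you genuinely diverge is the noise sum $\sum_{j}\eta_0^3\alpha^{3j}(1+\mu\eta_0\rho)^{-(T-j+1)}$. The paper simply replaces $\alpha^{3j}$ by $\alpha^{3T}=\rho^3$ inside the sum; since $\alpha\in(0,1)$ and $j\le T$ one actually has $\alpha^{3j}\ge\alpha^{3T}$, so that substitution \emph{decreases} each summand and is not a valid upper bound as written --- the clean $\rho^2$ constant in \eqref{eq:convergence_result4} is obtained by this step. Your alternative --- writing $\alpha^{3j}=\rho^3\alpha^3\alpha^{-3(T-j+1)}$ and summing the geometric series with ratio $q=\alpha^{-3}/(1+\mu\eta_0\rho)$ --- is a legitimate way to recover the $\rho^3\cdot\tfrac{1}{\mu\eta_0\rho}=\rho^2\eta_0^2/\mu$ scaling, but it buys this at two costs you correctly acknowledge: (i) you need $q<1$ and $1-q=\Theta(\mu\eta_0\rho)$, which requires $\mu\gamma\rho\log(n^{1/2}T)$ to dominate $\log(1/\rho)$ by a constant factor, i.e.\ an explicit ``$n$ or $T$ large enough'' hypothesis not stated in the corollary; and (ii) the resulting absolute constant multiplying $\rho^2\gamma^2(\log(n^{1/2}T))^2/\mu$ is only controlled up to a factor depending on how far $q$ sits below $1$, so the exact coefficient $\tfrac{4L\sigma^2(1+2\alpha\beta)}{3\mu}$ in \eqref{eq:convergence_result4} is not literally reproduced. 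In short: your argument is sound where the paper's corresponding inequality is questionable, it yields the same $\tilde{\Ocal}(1/(nT^2))$ rate, and the discrepancy is confined to constants and a mild largeness assumption.
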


\begin{proof}

First, observe that $K = 1 + \alpha\beta (1 +\mu \max_t \eta_t) = 1 + \alpha\beta (1 +\mu \eta_1  )$, $\eta_t = \alpha \eta_{t-1}$ and $\eta_t \leq \eta_1 \leq \frac{1}{2L\sqrt{K}}$. 
The learning rate is also lower bounded: $\eta_t \geq \eta_T = \eta_0 \alpha^T = \eta_0 \rho$. 
Note that $\mu \eta_t \leq 1$, then $\frac{1}{1+ \mu \eta_t} \leq 1- \frac{\mu \eta_t}{2}$ and $1-x \leq \exp(-x)$.
\\    
Next, from Theorem \ref{thm_smg_st_convex}, we have
\begin{align*} 
    &\quad \E \left[\left\|\tilde{w}_{T}- w_*\right\|^2  \right] \\
    &\leq  \frac{\E \left[\left\|\tilde{w}_{0}- w_*\right\|^2 \right]}{(1-\beta) \prod_{t=1}^{T} (1+\mu\eta_t)}
     +   \frac{4L\sigma^2}{3n}  \sum_{j=1}^T  \eta_0^3 \alpha^{3j} \frac{ K-\beta }{\prod_{t=j}^{T} (1+\mu\eta_t)}\nonumber\\
     &\leq  \frac{\E \left[\left\|\tilde{w}_{0}- w_*\right\|^2 \right]}{(1-\beta)  (1+\mu\eta_0 \rho)^T } 
     +   \frac{4L\sigma^2 (K-\beta)}{3n}  \sum_{j=1}^T  \eta_0^3 \alpha^{3j} \frac{ 1 }{\prod_{t=j}^{T} (1+\mu\eta_0 \rho)}
     \nonumber\\
     &\leq  \frac{\E \left[\left\|\tilde{w}_{0}- w_*\right\|^2 \right]}{(1-\beta) } \left(1- \frac{\mu \eta_0 \rho}{2}\right)^T 
     +   \frac{4L\sigma^2 (K-\beta)}{3n}  \sum_{j=1}^T  \eta_0^3 \alpha^{3T} \frac{ 1 }{ (1+\mu\eta_0 \rho)^{T-j+1}} \nonumber\\
     &\leq  \frac{\E \left[\left\|\tilde{w}_{0}- w_*\right\|^2 \right]}{(1-\beta) }  \exp\left(- \frac{\mu \eta_0 \rho}{2}\right)^T
     +   \frac{4L\sigma^2 (K-\beta)}{3n} \eta_0^3 \alpha^{3T}  \sum_{j=1}^T   \frac{ 1 }{ (1+\mu\eta_0 \rho)^{T-j+1}} \nonumber\\
     &\leq  \frac{\E \left[\left\|\tilde{w}_{0}- w_*\right\|^2 \right]}{(1-\beta) }  \exp\left(- \frac{\mu T\eta_0 \rho}{2}\right)
     +   \frac{4L\sigma^2 (K-\beta) \rho^3 \eta_0^3}{3n } \frac{1}{\mu\eta_0 \rho}\nonumber\\
     &\leq  \frac{\E \left[\left\|\tilde{w}_{0}- w_*\right\|^2 \right]}{(1-\beta) }  \exp\left(- \frac{\mu T\eta_0 \rho}{2}\right)
     +   \frac{4L\sigma^2 (K-\beta) \rho^2 \eta_0^2}{3n \mu}. 
\end{align*}
Finally, applying the choice $\eta_0 =\frac{\gamma \log(n^{1/2} T)}{T} $ we have 
\begin{align*}
    &\E \left[\left\|\tilde{w}_{T}- w_*\right\|^2  \right] \nonumber\\
     &\leq  \frac{\E \left[\left\|\tilde{w}_{0}- w_*\right\|^2 \right]}{(1-\beta) }  \exp\left(- \frac{\mu \gamma \log(n^{1/2} T) \rho}{2}\right)
     +   \frac{4L\sigma^2 (K-\beta) \rho^2 }{3n \mu } \frac{\gamma^2 (\log(n^{1/2} T))^2}{T^2} \nonumber\\
    &\leq  \frac{1}{nT^2} \left(\frac{\E \left[\left\|\tilde{w}_{0}- w_*\right\|^2 \right]}{(1-\beta)} e^{- {\mu \gamma \rho}/{2}}
    +   \frac{4L\sigma^2 (1 + 2\alpha\beta) \rho^2 \gamma^2 (\log(n^{1/2} T))^2 }{3 \mu }  \right),
\end{align*}
which proves \eqref{eq:convergence_result4}.
\end{proof}

\begin{rem}
With a randomized reshuffled permutation strategy, the convergence rate of Algorithm~\ref{sgd_momentum_shuffling_01} for a strongly convex setting is $\tilde{\Ocal}(\frac{1}{nT^2})$ for two different step sizes. 
This rate matches  the state-of-the-art rates for shuffling SGD methods in a strongly convex setting, see, e.g., \cite{mishchenko2020random,nguyen2020unified}. 
To our best knowledge, this is the first work for a shuffling momentum method achieving SOTA convergence rate in a strongly convex setting.
Similar to the convex case, we can derive an analysis for unified data shuffling schemes and  prove an $\tilde{\Ocal}(\frac{1}{T^2})$ rate 
(which is worse than the randomized reshuffling scheme with a factor of $n$) using similar arguments as in our analysis of Theorem \ref{thm_smg_st_convex}.
\end{rem}

Similar to the convex case, we note that other step size strategies \cite{nguyen2020unified,loshchilov10sgdr,li2020exponential} can be adapted to our strongly convex analysis, by considering the factor $\alpha$. However, we omit such discussion due to long technical details. 
\section{Numerical Experiments}\label{sec:experiments}
We demonstrate the theoretical results developed in the previous sections.
We conduct a small numerical experiment on convex problems on a logistic regression model using real data sets and compare the performance of our SMG algorithm with some state-of-the-art stochastic gradient methods in the literature.

\textbf{Experiment settings. }
We consider the following convex problem arising from binary classification:
\begin{align*}
   \min_{w \in \mathbb{R}^d} \Big \{ F(w) \! := \! \frac{1}{n} \sum_{i=1}^n \log(1 \! + \! \exp(- y_i x_i^\top w )) \!  \Big \}, 
\end{align*}
where $\{(x_i, y_i)\}_{i=1}^n$ is a given set of training samples. 
We perform experiments on two real datasets from \texttt{LIBSVM} \citep{LIBSVM}, namely \texttt{w8a} with $49,749$ samples and \texttt{ijcnn1} with  $91,701$ samples.
The experiment was repeated three times with different random seeds and we report the average metrics (training loss and test accuracy) of these results.

We compare our SMG and a standard stochastic gradient scheme (SGD) and two other state-of-the-art methods: stochastic gradient  with momentum (SGD-M) \citep{polyak1964some} and Adam \citep{Kingma2014}. 
To obtain a fair comparison, we employ a randomized reshuffling scheme across all methods. 
For the latter two algorithms, we adopt the hyper-parameter settings that are recommended and widely utilized in practical applications. 
For  SMG, we set the parameter $\beta := 0.5$ as it is the recommended parameter in our paper \cite{smg_tran21b}. 
 
We fine-tune each algorithm with a grid search strategy for a constant step size and report the best result we obtain. 
For SGD, SGM, and SGD-M, the searching grid is $\{0.1, 0.01, 0.001\}$. 
We use the following rule to update the weights for SGD-M algorithm: 
\begin{align*}
    &m_{i+1}^{(t)} := \beta m_{i}^{(t)} + g_i^{(t)},\nonumber\\
    &w_{i+1}^{(t)} := w_{i}^{(t)} - \eta_{i}^{(t)} m_{i+1}^{(t)},
\end{align*}
where $g_i^{(t)}$ is the $(i+1)$-th gradient at epoch $t$. 
We choose this momentum update since it is one of the standard updates and it is implemented in PyTorch with the default value $\beta = 0.9$. 
Since the default learning rate for Adam is $0.001$, we let our searching grid for Adam be $\{0.01, 0.001, 0.0001\}$. 
The two momentum hyper-parameters for Adam is $\beta_1 := 0.9$ and $\beta_2 := 0.999$.


\begin{figure}[!ht] 
\begin{center}
\includegraphics[width=0.85\textwidth]{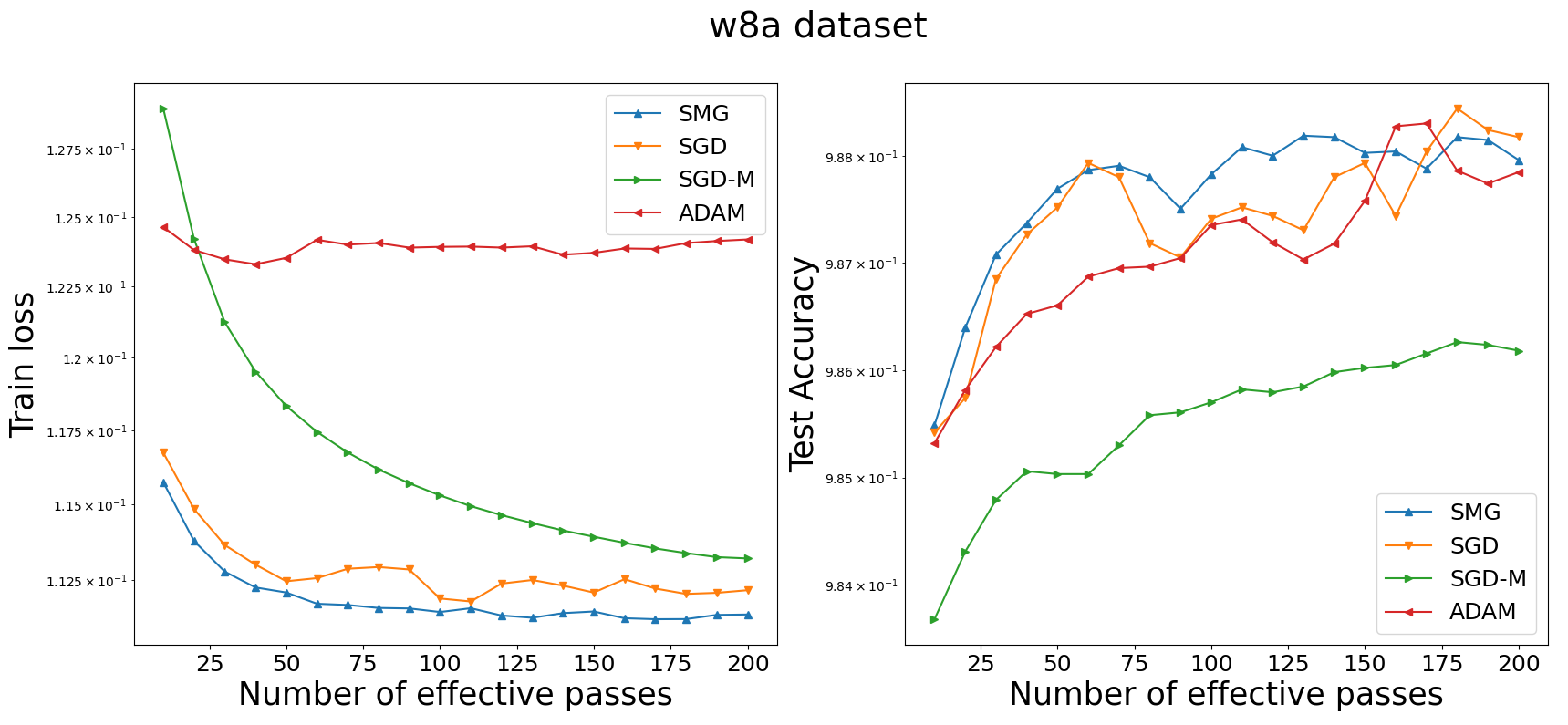}
\includegraphics[width=0.85\textwidth]{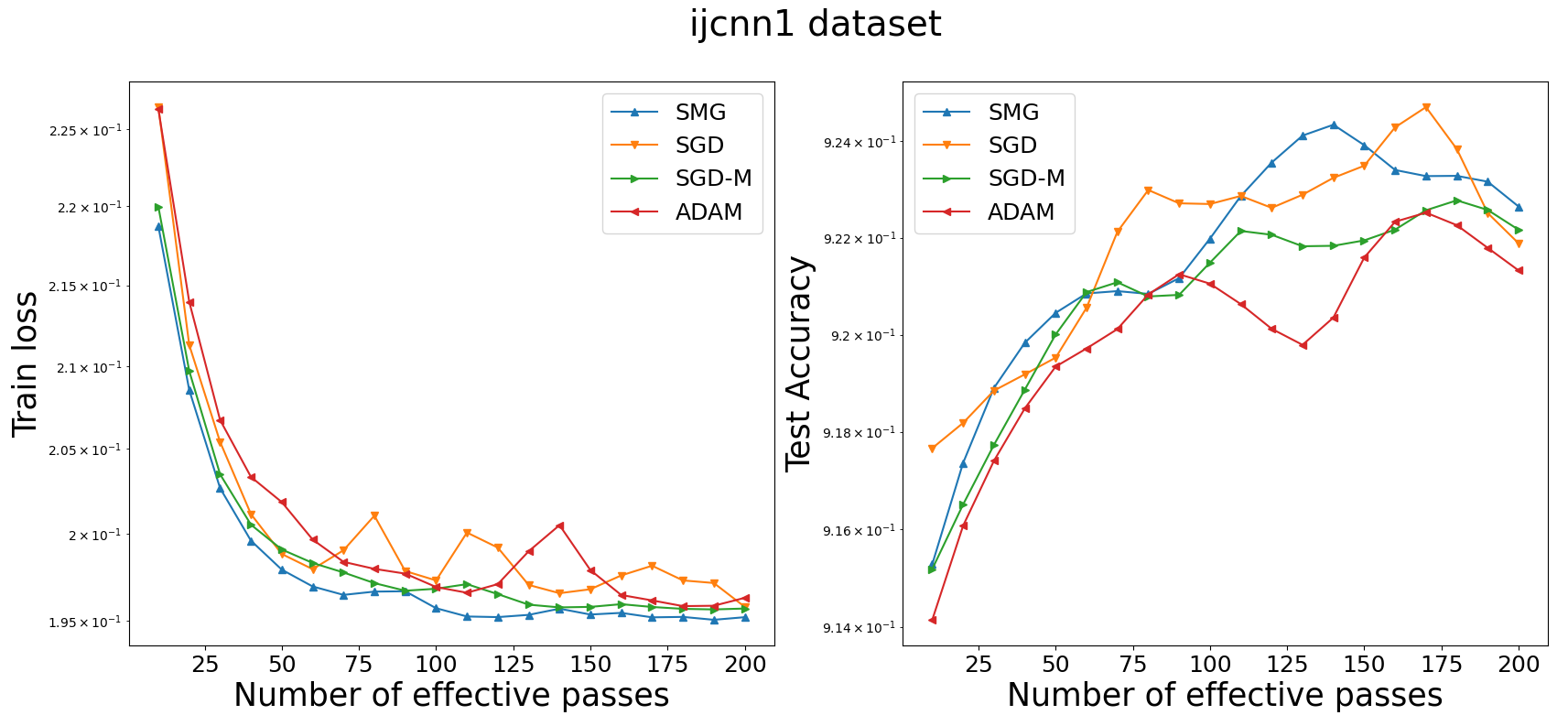}
\caption{The train loss (left) and test accuracy (right) produced by SMG, SGD, SGD-M, and Adam for the \texttt{w8a} and \texttt{ijcnn1} datasets, respectively.}
\label{fig}
\end{center}
\end{figure}
\textbf{Numerical results. }
We report the results of our experiments for the two datasets  in Figure \ref{fig}.
For the \texttt{w8a} dataset, SMG demonstrates notably superior train loss compared to Adam and SGD-M, and slightly outperforms SGD. 
For the \texttt{ijcnn1} dataset, SMG exhibits lower train losses compared to SGD and Adam, and has  comparable performance to SGD with momentum.
On the test accuracy, although there is no clear dominance among the methods, SMG consistently performs well throughout the training process and is comparable to the best method in our test.

Note that problem we aim to solve is minimizing the training loss, which is not always the same as generalizing well on the testing data. We tune the learning rates so that each methods has the best training performance (without using validation data). Hence, the test accuracy in our experiment only served as some reference as it is comparable with other algorithms and it does not necessary reflect the generalization performance of our method.

\section{Conclusions}\label{sec:concl}
This paper studies the Shuffling Momentum Gradient (SMG) algorithm introduced in \cite{smg_tran21b}, a variant of the Shuffling SGD algorithm augmented with an anchor momentum. 
While our earlier work extensively analyzed SMG in non-convex settings, this study extends the investigation to two cases: convex and strongly convex settings. Our analysis is new and matches the current state-of-the-art convergence rate of the shuffling gradient-type methods. This work and \cite{smg_tran21b} provide a complete analysis of the Shuffling Momentum Gradient algorithm for both convex and nonconvex problems. 

Several promising research directions arises from this work. For instance, first, investigating other momentum methods and adaptive step sizes is an interesting problem for further exploration. Second, it is desirable to study our approach to  address composite problems and minimax optimization. Third, while the study of most shuffling methods focus on the complexity per epoch, the convergence analysis for each iteration may offer additional understanding and insights of the algorithm's behavior.

\begin{appendices}
\section{Technical Notations and Expressions}\label{sec:pre_results}
First, we introduce some additional notations for our analysis in the sequel.
For $t \geq 1$ and  $i = 0,\dots, n$, we denote
\begin{align}
    & A_i^{(t)} := \Big\Vert  \sum_{j=0}^{i-1}  g_{j}^{(t)} \Big\Vert ^2   \quad  \text{and} \quad
     B_i^{(t)} := \Big\Vert \sum_{j = i}^{n-1} g_{j}^{(t)}\Big\Vert^2.     \label{define_AB}
\end{align}
We adopt the convention $\sum_{j=0}^{-1}  g_{j}^{(t)} = 0$, and $\sum_{j=n}^{n-1}  g_{j}^{(t)} = 0$ in the above definitions.
Next, we collect the following expressions from Algorithm~\ref{sgd_momentum_shuffling_01} to use later in our analysis.
From the update rules of Algorithm~\ref{sgd_momentum_shuffling_01}, for $t > 1$, we have
\begin{align}\label{update_m_0^{T}}
\hspace{-2.5ex}
    m_0^{(t)} = \tilde{m}_{t-1} = v_{n}^{(t-1)} \overset{\eqref{eq:main_update}}{=} v_{n-1}^{(t-1)} + \frac{1}{n} g_{n-1}^{(t-1)}
    = v_{0}^{(t-1)} + \frac{1}{n} \sum_{j=0}^{n-1} g_{j}^{(t-1)} = \frac{1}{n} \sum_{j=0}^{n-1} g_{j}^{(t-1)}.
\hspace{-1ex}    
\end{align}
If $t = 1$, then we set $m_0^{(1)} = \tilde{m}_{0} = \textbf{0} \in\R^d$.
From the update $w_{i+1}^{(t)} := w_{i}^{(t)} - \eta_i^{(t)} m_{i+1}^{(t)}$  with $\eta_i^{(t)} := \frac{\eta_t}{n}$, for $i = 1, 2,\dots ,n-1$, at Step \ref{alg:A1_step4} of Algorithm~\ref{sgd_momentum_shuffling_01}, we can derive
\begin{align}
    w_{i}^{(t)} &\overset{\eqref{eq:main_update}}{=} w_{i-1}^{(t)} - \frac{\eta_t}{n} m_{i}^{(t)} = w_{0}^{(t)} - \frac{\eta_t}{n} \sum_{j=0}^{i-1} m_{j+1}^{(t)}, \quad  t \geq 1.   \label{update_w_i^{T}}
\end{align}
Note that $\sum_{j=0}^{-1} m_{j+1}^{(t)}$ and $\sum_{j = n}^{n-1} m_{j+1}^{(t-1)} = 0$ by convention, these equations also hold true for $i=0$ and $i=n$.

Now, letting $i=n$ in Equation \eqref{update_w_i^{T}}, for all $t\geq 1$, we have
\begin{align*}
    w_{n}^{(t)} - w_{0}^{(t)} &= - \frac{\eta_t}{n} \sum_{j=0}^{n-1} m_{j+1}^{(t)} 
    \overset{\eqref{eq:main_update}}{=}  - \frac{\eta_t}{n} \sum_{j=0}^{n-1} \big( \beta m_{0}^{(t)} + (1-\beta) g_{j}^{(t)}\big) \nonumber\\
    &=  - \frac{\eta_t}{n} \Big( n \beta m_{0}^{(t)} +  (1-\beta)  \sum_{j=0}^{n-1}  g_{j}^{(t)}  \Big).
\end{align*}
Since $m_0^{(t)} = \frac{1}{n} \sum_{j=0}^{n-1} g_{j}^{(t-1)}$ for $t \geq 2$ (due to \eqref{update_m_0^{T}}), we have the following update
\begin{align}
    w_{n}^{(t)} - w_{0}^{(t)} &= - \frac{\eta_t}{n}  \sum_{j=0}^{n-1} \Big( \beta g_{j}^{(t-1)} + (1-\beta)   g_{j}^{(t)} \Big).
      \label{update_epoch_02}
\end{align}
For $t = 1$, since $m_{0}^{(t)} =\mathbf{0}$, we get
\begin{align}
    w_{n}^{(t)} - w_{0}^{(t)} =- \frac{\eta_t}{n}  (1-\beta)  \sum_{j=0}^{n-1}  g_{j}^{(t)}.\label{update_epoch_03}
\end{align}

\section{Technical Lemmas}
This appendix provides necessary technical lemmas used in our analysis.

\subsection{Lemma \ref{lem_key_rr}: Sampling without replacement}
We need \citep{mishchenko2020random}[Lemma 1]  for sampling without replacement in our analysis.
For complete references, we recall it here.

\begin{lem}[Lemma 1 in \cite{mishchenko2020random}]\label{lem_key_rr}
Let $X_1, \cdots, X_n \in \R^d$ be fixed vectors, $\bar{X} := \frac{1}{n} \sum_{i=1}^n X_i$ be their average and $\sigma^2 := \frac{1}{n} \sum_{i=1}^n \|X_i -\bar{X}\|^2$ 
be the population variance. Fix any $k \in \{1,\cdots, n\}$, let $X_{\pi_1}, \cdots, X_{\pi_k}$ be sampled uniformly without replacement from $\{X_1, \cdots, X_n\}$ and $\bar{X}_\pi$ be their average. 
Then, the sample average and the variance are given, respectively by
\begin{align*}
    \E [\bar{X}_\pi] = \bar{X} \qquad \text{and} \qquad \E \left[ \|\bar{X}_\pi - \bar{X}\|^2 \right] = \frac{n-k}{k(n-1)} \sigma^2.
\end{align*}
\end{lem}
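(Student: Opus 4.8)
The plan is to exploit the exchangeability of sampling without replacement together with a direct second-moment expansion, treating the mean and the variance separately. For the mean, I would first observe that by the symmetry of uniform sampling without replacement, every index $i \in \{1,\dots,n\}$ is equally likely to appear in any fixed sampling position, so the marginal law of each $X_{\pi_j}$ is uniform on $\{X_1,\dots,X_n\}$. Consequently $\E[X_{\pi_j}] = \frac{1}{n}\sum_{i=1}^n X_i = \bar{X}$ for every $j$, and linearity of expectation immediately gives $\E[\bar{X}_\pi] = \frac{1}{k}\sum_{j=1}^k \E[X_{\pi_j}] = \bar{X}$.

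For the variance I would first reduce to the centered case. Since replacing each $X_i$ by $X_i - \bar{X}$ changes neither $\sigma^2$ nor the centered quantity $\|\bar{X}_\pi - \bar{X}\|^2$, I may assume without loss of generality that $\bar{X} = 0$, so that $\sum_{i=1}^n X_i = 0$ and $\sigma^2 = \frac{1}{n}\sum_{i=1}^n \|X_i\|^2$; the target then becomes $\E[\|\bar{X}_\pi\|^2]$. Expanding the squared norm of the average into diagonal and off-diagonal contributions, I would write
\[
\|\bar{X}_\pi\|^2 = \frac{1}{k^2}\sum_{j=1}^k \|X_{\pi_j}\|^2 + \frac{1}{k^2}\sum_{j\neq l}\langle X_{\pi_j}, X_{\pi_l}\rangle,
\]
and handle the two sums separately. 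The diagonal terms are immediate: each marginal is uniform, so $\E[\|X_{\pi_j}\|^2] = \sigma^2$, and the first sum contributes $k\sigma^2$ inside the bracket. The off-diagonal terms are where the without-replacement structure truly enters: for $j \neq l$ the pair $(\pi_j, \pi_l)$ is uniform over ordered pairs of distinct indices, each occurring with probability $\frac{1}{n(n-1)}$, giving $\E[\langle X_{\pi_j}, X_{\pi_l}\rangle] = \frac{1}{n(n-1)}\sum_{i\neq i'}\langle X_i, X_{i'}\rangle$.

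The main obstacle — the single computation responsible for producing the finite-population correction factor — is evaluating $\sum_{i\neq i'}\langle X_i, X_{i'}\rangle$. Here I would invoke the identity $\sum_{i\neq i'}\langle X_i, X_{i'}\rangle = \big\|\sum_i X_i\big\|^2 - \sum_i \|X_i\|^2$, which under the centering assumption reduces to $0 - n\sigma^2 = -n\sigma^2$. Thus each off-diagonal expectation equals $-\sigma^2/(n-1)$, and since there are exactly $k(k-1)$ such ordered pairs, combining the two sums yields
\[
\E[\|\bar{X}_\pi\|^2] = \frac{1}{k^2}\left(k\sigma^2 - \frac{k(k-1)\sigma^2}{n-1}\right) = \frac{\sigma^2}{k}\cdot\frac{n-k}{n-1} = \frac{n-k}{k(n-1)}\sigma^2,
\]
which is precisely the claimed formula. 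The only delicate points are correctly identifying the joint probability $\frac{1}{n(n-1)}$ for distinct ordered pairs and applying the centering identity; everything else is bookkeeping.
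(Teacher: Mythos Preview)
Your proof is correct and is the standard argument for this classical finite-population sampling identity. Note, however, that the paper does not actually prove this lemma: it is quoted verbatim as Lemma~1 from \cite{mishchenko2020random} and used as a black box in the subsequent variance bounds, so there is no paper proof to compare against. Your exchangeability-plus-second-moment-expansion approach is exactly the argument one would expect (and is essentially how the cited source proves it as well), with the centering reduction and the identity $\sum_{i\neq i'}\langle X_i,X_{i'}\rangle = \big\|\sum_i X_i\big\|^2 - \sum_i\|X_i\|^2$ being the two key steps that produce the $(n-k)/(n-1)$ correction factor.
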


Since $\pi^{(t)} = (\pi^{(t)}(1),\dots,\pi^{(t)}(n))$ is uniformly  sampled  at  random  without  replacement  from $\{1,\dots,n\}$, by Lemma~\ref{lem_key_rr}, we have $\mathbb{E}\left[ \frac{1}{n-i} \sum_{j=i}^{n-1} \nabla f ( w_* ; \pi^{(t)} (j + 1)) \right] = \nabla F ( w_* )$ and
\begin{align}
    &\quad\mathbb{E} \left[ \left\| \sum_{j=i}^{n-1} \nabla f ( w_* ; \pi^{(t)} (j + 1))  \right\|^2 \right] \nonumber\\
    &= (n-i)^2 \mathbb{E} \left[ \left\| \frac{1}{n-i} \sum_{j=i}^{n-1} \nabla f ( w_* ; \pi^{(t)} (j + 1))  \right\|^2 \right] \nonumber\\
    &= (n-i)^2 \mathbb{E} \left[ \left\| \frac{1}{n-i} \sum_{j=i}^{n-1} \nabla f ( w_* ; \pi^{(t)} (j + 1))  - \nabla F ( w_* )  \right\|^2 \right] \nonumber\\
    &= \frac{(n-i)^2 i}{(n-i)(n-1)} \frac{1}{n} \sum_{j=0}^{n-1} \left\| \nabla f ( w_* ; \pi^{(t)} (j + 1))  \right\|^2 \nonumber\\
    &\overset{\eqref{defn_finite}}{\leq} \frac{(n-i) i}{(n-1)} \sigma^2.  \label{upper_bound_stc_exp}
\end{align}

\subsection{Bound expected squared norm of sum of gradients}
\begin{lem}\label{lem_bound_Bit}
Let  $w_*$ be an optimal solution of $F$, and $\{w_i^{(t)}\}$ be generated by Algorithm \ref{sgd_momentum_shuffling_01} with $0\leq \beta < 1$ and $\eta_i^{(t)} := \frac{\eta_t}{n}$ for every $t \geq 1$.
Assume that Assumption~\ref{as:A1} and Assumption~\ref{ass_convex} hold, {with the application of a randomized reshuffling strategy}, we have the following bounds for $t \geq 1$:
\begin{align*}
    \E\left[B_0^{(t)}\right] \leq 4nL \cdot \E[C_t]~~~~~\text{and}~~~~~    \sum_{i=0}^{n-1} \E\left[B_i^{(t)}\right] 
   \leq 4n^2 L\cdot \E[C_t]  +  \frac{2n^2\sigma^2}{3}.
\end{align*}
\end{lem}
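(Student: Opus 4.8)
The plan is to bound the two quantities $B_i^{(t)} = \bigl\Vert \sum_{j=i}^{n-1} g_j^{(t)}\bigr\Vert^2$ by comparing the gradients $g_j^{(t)} = \nabla f_j^{(t)}(w_j^{(t)})$ against the gradients evaluated at the optimal point $w_*$. First I would split each $B_i^{(t)}$ using the elementary inequality $\Vert a + b\Vert^2 \leq 2\Vert a\Vert^2 + 2\Vert b\Vert^2$ into a term involving $\sum_{j=i}^{n-1}(\nabla f_j^{(t)}(w_j^{(t)}) - \nabla f_j^{(t)}(w_*))$ and a term involving $\sum_{j=i}^{n-1}\nabla f_j^{(t)}(w_*)$. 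For the first term I would apply Cauchy--Schwarz (the $\Vert\sum_{j=i}^{n-1} u_j\Vert^2 \leq (n-i)\sum_{j=i}^{n-1}\Vert u_j\Vert^2$ bound, with $n-i \leq n$) and then invoke \eqref{eq_div_1}, which gives $\Vert \nabla f_j^{(t)}(w_j^{(t)}) - \nabla f_j^{(t)}(w_*)\Vert^2 \leq 2L\, D_j^{(t)}(w_*, w_j^{(t)})$, so that summing over $j$ produces $2L\, C_t$ (using the definition \eqref{define_C}). For the second term I would take expectations and use the without-replacement bound \eqref{upper_bound_stc_exp}, which bounds $\E\bigl[\Vert\sum_{j=i}^{n-1}\nabla f(w_*;\pi^{(t)}(j+1))\Vert^2\bigr]$ by $\frac{(n-i)i}{n-1}\sigma^2$.

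For the first claim, $\E[B_0^{(t)}]$, I would set $i = 0$: the Cauchy--Schwarz step gives $n\sum_{j=0}^{n-1}\Vert \nabla f_j^{(t)}(w_j^{(t)}) - \nabla f_j^{(t)}(w_*)\Vert^2 \leq n \cdot 2L \cdot C_t$, so the first term contributes $2 \cdot 2nL\, C_t = 4nL\, C_t$ after the factor of $2$ from the splitting; and the second term vanishes in expectation because at $i = 0$ the quantity $\frac{(n-i)i}{n-1}\sigma^2 = 0$ (indeed $\sum_{j=0}^{n-1}\nabla f(w_*;\pi^{(t)}(j+1)) = n\nabla F(w_*) = 0$ since $w_*$ is optimal, so this term is exactly zero, not just in expectation). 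This yields $\E[B_0^{(t)}] \leq 4nL\,\E[C_t]$.

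For the second claim I would sum the bound over $i = 0, \dots, n-1$. The first term, when summed, gives at most $\sum_{i=0}^{n-1} 2(n-i)\cdot 2L\, C_t \leq \sum_{i=0}^{n-1} 2n \cdot 2L\, C_t = 4n^2 L\, C_t$; taking expectations gives $4n^2 L\,\E[C_t]$. For the second term, summing $2\cdot\frac{(n-i)i}{n-1}\sigma^2$ over $i = 0,\dots,n-1$ requires the identity $\sum_{i=0}^{n-1}(n-i)i = \sum_{i=1}^{n-1}(n-i)i = \frac{n(n-1)(n+1)}{6}$, so $\sum_{i=0}^{n-1}\frac{(n-i)i}{n-1}\sigma^2 = \frac{n(n+1)}{6}\sigma^2 \leq \frac{n^2}{3}\sigma^2$ for $n \geq 1$ (using $n+1 \leq 2n$); with the factor of $2$ this is $\frac{2n^2\sigma^2}{3}$. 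Adding the two contributions gives the stated bound $4n^2 L\,\E[C_t] + \frac{2n^2\sigma^2}{3}$.

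The main obstacle, such as it is, is bookkeeping: making sure the factors of $2$ from the splitting, the $(n-i) \leq n$ relaxations, and the combinatorial sum $\sum_i (n-i)i$ all line up to give exactly the stated constants $4n^2 L$ and $2n^2/3$ rather than something slightly larger; in particular one must be a little careful that the simple bound $\frac{n(n+1)}{6} \leq \frac{n^2}{3}$ is what is being used. There is no conceptual difficulty — the only nontrivial inputs are \eqref{eq_div_1} (smoothness--convexity) and \eqref{upper_bound_stc_exp} (the without-replacement variance bound), both already established in the excerpt.
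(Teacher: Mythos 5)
Your proposal is correct and follows essentially the same route as the paper's proof: the same $\Vert a+b\Vert^2 \leq 2\Vert a\Vert^2 + 2\Vert b\Vert^2$ splitting against the gradients at $w_*$, the same Cauchy--Schwarz step with the $(n-i)$ factor, the same use of \eqref{eq_div_1} to reach $C_t$, and the same invocation of \eqref{upper_bound_stc_exp} together with $\sum_{i=0}^{n-1}(n-i)i/(n-1) = n(n+1)/6 \leq n^2/3$. All constants line up with the stated bounds.
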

\begin{proof}
We start with the definition of $B_i^{(t)}$: 
\begin{align*}
    B_i^{(t)} &= \left \|   \sum_{j=i}^{n-1}  g_{j}^{(t)}  \right \|^2 \nonumber\\
    &= \left \|   \sum_{j=i}^{n-1}  \left(g_{j}^{(t)} - \nabla f(w_*; \pi^{(t)}(j+1) ) \right) + \sum_{j=i}^{n-1}  \nabla f(w_*; \pi^{(t)}(j+1) ) \right\|^2 \nonumber\\
    &\overset{(a)}{\leq}  2\left \|   \sum_{j=i}^{n-1}  \left(g_{j}^{(t)} - \nabla f(w_*; \pi^{(t)}(j+1) ) \right) \right\|^2 + 2\left \| \sum_{j=i}^{n-1}  \nabla f(w_*; \pi^{(t)}(j+1) ) \right\|^2 \nonumber\\
    &\overset{(a)}{\leq}  2 (n-i) \sum_{j=i}^{n-1} \left \| g_{j}^{(t)} - \nabla f(w_*; \pi^{(t)}(j+1) ) \right\|^2 + 2\left \| \sum_{j=i}^{n-1}  \nabla f(w_*; \pi^{(t)}(j+1) ) \right\|^2\nonumber\\
    &\overset{(b)}{\leq} 2 (n-i) \cdot 2L \sum_{j=i}^{n-1} D_{j}^{(t)} (w_*; w_{j}^{(t)}) +  2\left \| \sum_{j=i}^{n-1}  \nabla f(w_*; \pi^{(t)}(j+1) ) \right\|^2,
\end{align*}
where  $(a)$ is from from the Cauchy-Schwarz inequality, and the last line  $(b)$ follows by the inequality \eqref{eq_div_1}.  
Now taking expectation to both sides and using \eqref{upper_bound_stc_exp}, we have
\begin{align*}
    \E\left[B_i^{(t)}\right] &= 4L (n-i) \E \left[ \sum_{j=i}^{n-1} D_{j}^{(t)} (w_*; w_{j}^{(t)})\right] +  2 \left [\E \left \| \sum_{j=i}^{n-1}  \nabla f(w_*; \pi^{(t)}(j+1) ) \right\|^2\right].\nonumber\\
    &\overset{\eqref{define_C}\eqref{upper_bound_stc_exp}}{\leq} 4L (n-i) \E[C_t] + 2\frac{(n-i) i}{(n-1)} \sigma^2.
\end{align*}
For the special case $i=0$, we have: 
\begin{align*}
    \E\left[B_0^{(t)}\right] {\leq} 4L n \E[C_t].
\end{align*}
Summing up the expression from $i := 0$ to $i := n-1$, we get 
\begin{align*}
   \sum_{i=0}^{n-1} \E\left[B_i^{(t)}\right] &\leq 4L\cdot \E[C_t] \sum_{i=0}^{n-1} (n-i)  + 2\sigma^2 \sum_{i=0}^{n-1}\frac{(n-i) i}{(n-1)}.\nonumber\\
   &\leq 4n^2 L\cdot \E[C_t]  +  \frac{2n^2\sigma^2}{3},
\end{align*}
where we use the facts that $\sum_{i=0}^{n-1} (n-i) \leq n^2$ and $\sum_{i=0}^{n-1}\frac{(n-i) i}{(n-1)} \leq \frac{n(n+1)}{6} \leq \frac{n^2}{3}$.
\end{proof}

\subsection{Bound expected sum of Bregman divergence (1)}
\begin{lem}\label{lem_bound_term_1}
Under the same setting as of Lemma~\ref{lem_bound_Bit}, it holds that
\begin{align*}
    \sum_{i=0}^{n-1} \E \left [D_{i}^{(t)}(w_{n}^{(t)}; w_{i}^{(t)}) \right]
    \leq 4 L^2\eta_t^2 F_t + \frac{2}{3}\eta_t^2  (1-\beta)L\sigma^2, \quad \text{ for } t \geq 1.
\end{align*}
\end{lem}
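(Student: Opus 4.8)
The plan is to bound each Bregman divergence $D_i^{(t)}(w_n^{(t)};w_i^{(t)})$ using the smoothness inequality \eqref{eq_div_2}, which gives $D_i^{(t)}(w_n^{(t)};w_i^{(t)}) \le \frac{L}{2}\|w_n^{(t)} - w_i^{(t)}\|^2$. So the task reduces to controlling $\sum_{i=0}^{n-1}\E\|w_n^{(t)} - w_i^{(t)}\|^2$. First I would use the telescoped update \eqref{update_w_i^{T}} to express $w_n^{(t)} - w_i^{(t)} = -\frac{\eta_t}{n}\sum_{j=i}^{n-1} m_{j+1}^{(t)}$, and then substitute the momentum expansion $m_{j+1}^{(t)} = \beta m_0^{(t)} + (1-\beta)g_j^{(t)}$ from \eqref{eq:main_update}. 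Recalling that for $t\ge 2$ we have $m_0^{(t)} = \frac1n\sum_{k=0}^{n-1} g_k^{(t-1)}$ (from \eqref{update_m_0^{T}}), this writes $w_n^{(t)} - w_i^{(t)}$ as a convex-type combination of a scaled sum of current-epoch gradients $\sum_{j=i}^{n-1} g_j^{(t)}$ and a scaled sum of previous-epoch gradients $\sum_{k=0}^{n-1} g_k^{(t-1)}$.

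Next I would apply the Cauchy--Schwarz / Young inequality to split $\|w_n^{(t)}-w_i^{(t)}\|^2$ into a $\beta$-weighted term involving $\|\sum_{k=0}^{n-1} g_k^{(t-1)}\|^2 = A_n^{(t-1)} = B_0^{(t-1)}$ and a $(1-\beta)$-weighted term involving $\|\sum_{j=i}^{n-1} g_j^{(t)}\|^2 = B_i^{(t)}$; using the convexity of $\|\cdot\|^2$ with weights $\beta$ and $1-\beta$ avoids losing an extra factor of $2$. After summing over $i = 0,\dots,n-1$ and taking expectations, I invoke Lemma~\ref{lem_bound_Bit}: the term $\sum_{i=0}^{n-1}\E[B_i^{(t)}] \le 4n^2 L\,\E[C_t] + \frac{2n^2\sigma^2}{3}$ handles the current-epoch contribution, while the previous-epoch term $\E[B_0^{(t-1)}] \le 4nL\,\E[C_{t-1}]$ (summed $n$ times, giving $4n^2 L\,\E[C_{t-1}]$) handles the anchor contribution. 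Collecting the $\beta\,\E[C_{t-1}]$ and $(1-\beta)\,\E[C_t]$ pieces assembles exactly $F_t$ as defined in \eqref{define_F}, and the leftover $\sigma^2$ term — which only comes with the $(1-\beta)$ weight since the previous-epoch bound carries no variance term — produces $\frac{2}{3}\eta_t^2(1-\beta)L\sigma^2$. The prefactor $\frac{L}{2}\cdot\frac{\eta_t^2}{n^2}\cdot 4n^2L = 2L^2\eta_t^2$ on the $C$-terms needs a small check, but combined with the two copies arising from how the $\beta$ and $1-\beta$ parts each contribute, it lands at $4L^2\eta_t^2 F_t$. For the boundary case $t=1$ one uses \eqref{update_epoch_03} instead, where $m_0^{(1)}=\mathbf 0$, so only the $(1-\beta)$ term survives and the bound holds a fortiori.

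The main obstacle I anticipate is the careful bookkeeping of the weights: one must use the $(\beta,1-\beta)$ convex split rather than a crude factor-$2$ Cauchy--Schwarz in order to get $F_t$ (not something like $\beta\,\E[C_{t-1}] + \E[C_t]$ with mismatched coefficients) and to keep the variance constant at exactly $\frac23(1-\beta)$ rather than a larger multiple. A secondary subtlety is the index range in $\sum_{j=i}^{n-1} g_j^{(t)}$ versus the full range $\sum_{k=0}^{n-1} g_k^{(t-1)}$ for the anchor — the anchor term does not shrink as $i$ grows, so summing $n$ copies of $\E[B_0^{(t-1)}]$ is unavoidable, which is why the $\beta$-term ends up with $n^2$ scaling matching the $(1-\beta)$-term and the two combine cleanly into $F_t$.
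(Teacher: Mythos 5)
Your proposal is correct and follows essentially the same route as the paper's proof: the smoothness bound \eqref{eq_div_2}, the telescoped update with the momentum expansion, the $(\beta,1-\beta)$ convexity split of $\|w_n^{(t)}-w_i^{(t)}\|^2$ into a $B_0^{(t-1)}$ piece and a $B_i^{(t)}$ piece, an appeal to Lemma~\ref{lem_bound_Bit}, and the assembly into $F_t$, with $t=1$ handled separately via $m_0^{(1)}=\mathbf{0}$. (One minor note: the anchor contribution is $(n-i)m_0^{(t)}$, so it does shrink with $i$; the paper simply bounds $\sum_{i}(n-i)^2\le n^3$, which yields the same $n^2$ scaling you describe, and the derived constants are actually $2L^2\eta_t^2$ and $\tfrac13$, which the lemma then loosens to $4L^2\eta_t^2$ and $\tfrac23$.)
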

\begin{proof}
We start with the case $t \geq 2$:
\begin{align*}
    D_{i}^{(t)}(w_{n}^{(t)}; w_{i}^{(t)}) &\overset{\eqref{eq_div_2}}{\leq} \frac{L}{2} \left\|w_{n}^{(t)} -  w_{i}^{(t)} \right\|^2\nonumber\\
    &\leq \frac{L}{2} \frac{\eta_t^2}{n^2} \left \| \beta (n -i) m_{0}^{(t)} +  (1-\beta)  \sum_{j=i}^{n-1}  g_{j}^{(t)}  \right \|^2 \nonumber\\
    &\overset{\eqref{update_m_0^{T}}}{\leq} \frac{L}{2} \frac{\eta_t^2}{n^2} \left \| \beta \frac{n-i}{n} \sum_{i=0}^{n-1} g_{i}^{(t-1)} +  (1-\beta)  \sum_{j=i}^{n-1}  g_{j}^{(t)}  \right \|^2 \nonumber\\
    &\overset{(a)}{\leq} \frac{L}{2} \frac{\eta_t^2}{n^2} \left[  \beta\left \|  \frac{n-i}{n} \sum_{i=0}^{n-1} g_{i}^{(t-1)}  \right \|^2 + (1-\beta) \left \|   \sum_{j=i}^{n-1}  g_{j}^{(t)}  \right \|^2 \right] \nonumber\\
    & \overset{\eqref{define_AB}, \eqref{define_AB}}{=} \frac{L}{2} \frac{\eta_t^2}{n^2} \left[  \beta \frac{(n-i)^2}{n^2}  B_0^{(t-1)} + (1-\beta) B_i^{(t)}  \right],
\end{align*}
where $(a)$ follows from the convexity of $\norms{\cdot}^2$ for $0\leq \beta < 1$. 
Summing up the expression from $i := 0$ to $i := n-1$ and talking expectation, we get 
\begin{align*}
    \sum_{i=0}^{n-1} \E \left [D_{i}^{(t)}(w_{n}^{(t)}; w_{i}^{(t)}) \right]
    &\leq \frac{L}{2} \frac{\eta_t^2}{n^2} \left[  \beta \frac{\sum_{i=0}^{n-1}(n-i)^2}{n^2} \E \left [ B_0^{(t-1)}\right] + (1-\beta) \sum_{i=0}^{n-1} \E \left [B_i^{(t)} \right] \right]\nonumber\\
    &\overset{(b)}{\leq}  \frac{L}{2} \frac{\eta_t^2}{n^2} \left[  \beta \frac{n^3}{n^2} \E \left [ B_0^{(t-1)}\right] + (1-\beta) \sum_{i=0}^{n-1} \E \left [B_i^{(t)} \right] \right]\nonumber\\
    &\overset{(c)}{\leq} \frac{L}{2} \frac{\eta_t^2}{n^2} \left[  4\beta n^2L \cdot \E[C_{t-1}] + (1-\beta) \left(4n^2 L\cdot \E[C_t]  +  \frac{2n^2\sigma^2}{3}  \right)\right]\nonumber\\
    &\leq \frac{L\eta_t^2}{2} \left[ 4 \beta L \cdot \E[C_{t-1}] + 4(1-\beta) L\cdot \E[C_t]  +  (1-\beta) \frac{2\sigma^2}{3} \right]\nonumber\\
    &\overset{\eqref{define_F}}{\leq} 2 L^2\eta_t^2 F_t + \frac{L\eta_t^2}{2} (1-\beta) \frac{2\sigma^2}{3} \nonumber\\
    &\leq 4 L^2\eta_t^2 F_t + \frac{2}{3}\eta_t^2  (1-\beta)L\sigma^2,
\end{align*}
where $(b)$ follows from the fact that $\sum_{i=0}^{n-1}(n-i)^2
\leq n^3$, and in $(c)$ we use the results from Lemma \ref{lem_bound_Bit}.

We do the same for the case $t =1$:
\begin{align*}
    D_{i}^{(t)}(w_{n}^{(t)}; w_{i}^{(t)}) &\overset{\eqref{eq_div_2}}{\leq} \frac{L}{2} \left\|w_{n}^{(t)} -  w_{i}^{(t)} \right\|^2 
    \leq \frac{L}{2} \frac{\eta_t^2}{n^2} \left \| \beta (n -i) m_{0}^{(t)} +  (1-\beta)  \sum_{j=i}^{n-1}  g_{j}^{(t)}  \right \|^2 \nonumber\\
    &\leq \frac{L}{2} \frac{\eta_t^2}{n^2} \left \|  (1-\beta)  \sum_{j=i}^{n-1}  g_{j}^{(t)}  \right \|^2 
    \leq \frac{L}{2} \frac{\eta_t^2}{n^2} \left[   (1-\beta)^2 \left \|   \sum_{j=i}^{n-1}  g_{j}^{(t)}  \right \|^2 \right] \nonumber\\
    & \overset{\eqref{define_AB}}{=} \frac{L}{2} \frac{\eta_t^2}{n^2} \left[   (1-\beta) B_i^{(t)}  \right].
\end{align*}
Summing up the expression from $i := 0$ to $i := n-1$ and talking expectation, we get 
\begin{align*}
    \sum_{i=0}^{n-1} \E \left [D_{i}^{(t)}(w_{n}^{(t)}; w_{i}^{(t)}) \right]
    &\leq \frac{L}{2} \frac{\eta_t^2}{n^2} \left[   (1-\beta) \sum_{i=0}^{n-1} \E \left [B_i^{(t)} \right] \right]\nonumber\\
    &\overset{(d)}{\leq} \frac{L}{2} \frac{\eta_t^2}{n^2} \left[   (1-\beta) \left(4n^2 L\cdot \E[C_t]  +  \frac{2n^2\sigma^2}{3}  \right)\right]\nonumber\\
    &\leq \frac{L\eta_t^2}{2} \left[ 4(1-\beta) L\cdot \E[C_t]  +  (1-\beta) \frac{2\sigma^2}{3} \right]\nonumber\\
    &\overset{\eqref{define_F}}{\leq} 2 L^2\eta_t^2 F_t + \frac{L\eta_t^2}{2} (1-\beta) \frac{2\sigma^2}{3} \nonumber\\
    &\leq 4 L^2\eta_t^2 F_t + \frac{2}{3}\eta_t^2  (1-\beta)L\sigma^2,
\end{align*}
where in $(d)$ we use the results from Lemma \ref{lem_bound_Bit}.
Hence the statement of Lemma \ref{lem_bound_term_1} is true for all $t \geq 1$.
\end{proof}

\subsection{Bound expected sum of Bregman divergence (2)}
\begin{lem}\label{lem_bound_term_2}
Under the same setting as of Lemma~\ref{lem_bound_Bit}, the following holds for $t\geq 2$:
\begin{align*}
    \sum_{i=0}^{n-1}\E \left [ D_{i}^{(t-1)}(w_{n}^{(t)}; w_{i}^{(t-1)}) \right]
    &\leq 4L^2 \eta_t^2 F_t + 4L^2\eta_{t-1}^2 F_{t-1}+  \frac{2}{3} \eta_{t-1}^2(1-\beta) L\sigma^2. 
\end{align*}
\end{lem}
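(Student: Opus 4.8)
The plan is to reuse the template of the proof of Lemma~\ref{lem_bound_term_1} verbatim, the only new feature being that $w_n^{(t)}$ and the anchor point $w_i^{(t-1)}$ now sit in two consecutive epochs, so the displacement $w_n^{(t)}-w_i^{(t-1)}$ has to be split across the epoch boundary. First I would apply \eqref{eq_div_2} to get $D_i^{(t-1)}(w_n^{(t)};w_i^{(t-1)})\leq \tfrac{L}{2}\|w_n^{(t)}-w_i^{(t-1)}\|^2$, then use the identity $w_0^{(t)}=\tilde w_{t-1}=w_n^{(t-1)}$ to write
\[
w_n^{(t)}-w_i^{(t-1)} = \big(w_n^{(t)}-w_0^{(t)}\big) + \big(w_n^{(t-1)}-w_i^{(t-1)}\big),
\]
and the elementary bound $\|a+b\|^2\leq 2\|a\|^2+2\|b\|^2$. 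This reduces the claim to separately bounding $\sum_{i=0}^{n-1}\E\|w_n^{(t)}-w_0^{(t)}\|^2$ and $\sum_{i=0}^{n-1}\E\|w_n^{(t-1)}-w_i^{(t-1)}\|^2$.

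For the first sum, the summand is $i$-independent, so it equals $n\,\E\|w_n^{(t)}-w_0^{(t)}\|^2$; I would substitute \eqref{update_epoch_02}, apply convexity of $\|\cdot\|^2$ with weights $\beta$ and $1-\beta$ to split the mixed sum of gradients, recognise the two resulting norms as $B_0^{(t-1)}$ and $B_0^{(t)}$ in the notation \eqref{define_AB}, and then invoke the first estimate of Lemma~\ref{lem_bound_Bit} together with the definition \eqref{define_F} of $F_t$; this produces exactly $4L\eta_t^2 F_t$. For the second sum, subtracting two instances of \eqref{update_w_i^{T}} gives $w_n^{(t-1)}-w_i^{(t-1)}=-\tfrac{\eta_{t-1}}{n}\sum_{j=i}^{n-1}m_{j+1}^{(t-1)}$; I would then insert the momentum update $m_{j+1}^{(t-1)}=\beta m_0^{(t-1)}+(1-\beta)g_j^{(t-1)}$ together with the anchor formula \eqref{update_m_0^{T}}, namely $m_0^{(t-1)}=\tfrac1n\sum_{j=0}^{n-1}g_j^{(t-2)}$ (for $t\geq 3$), use convexity of $\|\cdot\|^2$ once more to bound the square by $\beta\tfrac{(n-i)^2}{n^2}B_0^{(t-2)}+(1-\beta)B_i^{(t-1)}$ up to the prefactor $\tfrac{\eta_{t-1}^2}{n^2}$, sum over $i$ using $\sum_{i=0}^{n-1}(n-i)^2\leq n^3$ and both estimates of Lemma~\ref{lem_bound_Bit}, and finally collapse $\beta\E[C_{t-2}]+(1-\beta)\E[C_{t-1}]=F_{t-1}$; this yields $4L\eta_{t-1}^2 F_{t-1}+\tfrac23\eta_{t-1}^2(1-\beta)\sigma^2$. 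Combining the two pieces with the $\tfrac L2$ and the factor $2$ reproduces the asserted right-hand side.

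The boundary case $t=2$ I would handle separately: there $m_0^{(1)}=\mathbf 0$, so the $B_0^{(t-2)}$ term drops out, $w_n^{(1)}-w_i^{(1)}=-\tfrac{\eta_1}{n}(1-\beta)\sum_{j=i}^{n-1}g_j^{(1)}$, and using $(1-\beta)^2\leq 1-\beta$ together with $F_1=(1-\beta)\E[C_1]$ one checks the same inequality holds. I expect the only real obstacle to be the bookkeeping: making sure each gradient block produced by the decomposition is attached to the correct epoch ($t$, $t-1$, or $t-2$) so that, after applying Lemma~\ref{lem_bound_Bit}, the terms regroup precisely into $F_t$ and $F_{t-1}$ with the intended powers of $\eta_t$ and $\eta_{t-1}$; the probabilistic content is inherited entirely from Lemma~\ref{lem_bound_Bit}, which already absorbs the without-replacement bound \eqref{upper_bound_stc_exp}, so no fresh concentration argument is needed.
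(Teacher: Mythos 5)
Your proposal is correct and follows essentially the same route as the paper's proof: the epoch-boundary split $w_n^{(t)}-w_i^{(t-1)}=(w_n^{(t)}-w_0^{(t)})+(w_n^{(t-1)}-w_i^{(t-1)})$ with $\|a+b\|^2\le 2\|a\|^2+2\|b\|^2$, the reuse of the displacement bounds from Lemma~\ref{lem_bound_term_1} at epochs $t$ and $t-1$, the application of both estimates of Lemma~\ref{lem_bound_Bit}, and the separate treatment of $t=2$ where $m_0^{(1)}=\mathbf{0}$ kills the $B_0^{(t-2)}$ term. The constants you track (the intermediate $4L\eta_t^2F_t$ and $4L\eta_{t-1}^2F_{t-1}+\tfrac{2}{3}\eta_{t-1}^2(1-\beta)\sigma^2$ before the outer factor of $L$) recombine exactly as in the paper.
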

\begin{proof}
We start with the case $t \geq 3$: 
\begin{align*}
    D_{i}^{(t-1)}(w_{n}^{(t)}; w_{i}^{(t-1)}) 
    &\overset{\eqref{eq_div_2}}{\leq} \frac{L}{2} \left\|w_{n}^{(t)} -  w_{i}^{(t-1)} \right\|^2\nonumber\\
    &\leq L\left\|w_{n}^{(t)} - w_{n}^{(t-1)} \right\|^2 + L \left\| w_{n}^{(t-1)} - w_{i}^{(t-1)} \right\|^2,
\end{align*}
where we use the inequality $\| u + v \|^2 \leq 2 \| u \|^2 + 2 \| v \|^2$. Note that $w_{n}^{(t-1)} = w_{0}^{(t)}$, using the result 
$$\left\|w_{n}^{(t)} -  w_{i}^{(t)} \right\|^2 = \frac{\eta_t^2}{n^2} \left[  \beta \frac{(n-i)^2}{n^2}  B_0^{(t-1)} + (1-\beta) B_i^{(t)}  \right]$$
from Lemma \ref{lem_bound_term_1} for $t$ and $t-1$, we have 
\begin{align*}
\hspace{-2ex}
    &\quad D_{i}^{(t-1)}(w_{n}^{(t)}; w_{i}^{(t-1)}) \nonumber\\
    &\leq L\left\|w_{n}^{(t)} - w_{0}^{(t)}\right\|^2 + L \left\| w_{n}^{(t-1)} - w_{i}^{(t-1)} \right\|^2\nonumber\\
    &\leq L \frac{\eta_t^2}{n^2} \left[  \beta B_0^{(t-1)} + (1-\beta) B_0^{(t)}\right] + L \frac{\eta_{t-1}^2}{n^2} \left[  \beta \frac{(n-i)^2}{n^2}  B_0^{(t-2)} + (1-\beta) B_i^{(t-1)}\right].
\hspace{-2ex}
\end{align*}
Summing up the expression from $i := 0$ to $i := n-1$ and taking expectation, we get 
\begin{align*}
    &\sum_{i=0}^{n-1}\E \left [ D_{i}^{(t-1)}(w_{n}^{(t)}; w_{i}^{(t-1)}) \right]\nonumber\\
    &\leq L \frac{\eta_t^2}{n} \left[  \beta \E \left [B_0^{(t-1)}\right] + (1-\beta) \E \left [B_0^{(t)}\right]\right] \nonumber\\
    & \quad \ + L \frac{\eta_{t-1}^2}{n^2} \left[  \beta \frac{\sum_{i=0}^{n-1}(n-i)^2}{n^2}  \E \left [B_0^{(t-2)} \right]+ (1-\beta) \sum_{i=0}^{n-1}\E \left [ B_i^{(t-1)}\right]\right]\nonumber\\
    &\overset{(a)}{\leq} L \frac{\eta_t^2}{n} \left[  4\beta nL \cdot \E[C_{t-1}] + 4(1-\beta) nL \cdot \E[C_{t}]\right] \nonumber\\
    & \quad \ + L \frac{\eta_{t-1}^2}{n^2} \left[  4\beta n^2L \cdot \E[C_{t-2}]+ (1-\beta) \left[ 4n^2 L\cdot \E[C_{t-1}]  +  \frac{2n^2\sigma^2}{3}  \right]\right]\nonumber\\
    &\leq 4L^2 \eta_t^2 \left[  \beta \cdot \E[C_{t-1}] + (1-\beta)  \cdot \E[C_{t}]\right] \nonumber\\
    & \quad \ + 4L^2\eta_{t-1}^2 \left[  \beta \cdot \E[C_{t-2}]+  (1-\beta) \E[C_{t-1}]   \right]+   (1-\beta) L \frac{\eta_{t-1}^2}{n^2} \frac{2n^2\sigma^2}{3}\nonumber\\
    &\leq 4L^2 \eta_t^2 F_t + 4L^2\eta_{t-1}^2 F_{t-1}+  \frac{2}{3} \eta_{t-1}^2(1-\beta) L\sigma^2,
\end{align*}
where $(a)$ follows from the fact that $\sum_{i=0}^{n-1}(n-i)^2  \leq n^3$ and from the results of Lemma \ref{lem_bound_Bit}.
Now we analyze similarly for the case $t = 2$: 
\begin{align*}
    D_{i}^{(t-1)}(w_{n}^{(t)}; w_{i}^{(t-1)}) 
    &\overset{\eqref{eq_div_2}}{\leq} \frac{L}{2} \left\|w_{n}^{(t)} -  w_{i}^{(t-1)} \right\|^2\nonumber\\
    &\leq L\left\|w_{n}^{(t)} - w_{n}^{(t-1)} \right\|^2 + L \left\| w_{n}^{(t-1)} - w_{i}^{(t-1)} \right\|^2,
\end{align*}
where we use the inequality $\| u + v \|^2 \leq 2 \| u \|^2 + 2 \| v \|^2$. Note that $w_{n}^{(t-1)} = w_{0}^{(t)}$, using the result 
\begin{align*}
    \left\|w_{n}^{(t)} -  w_{i}^{(t)} \right\|^2 &= \frac{\eta_t^2}{n^2} \left[  \beta \frac{(n-i)^2}{n^2}  B_0^{(t-1)} + (1-\beta) B_i^{(t)}  \right] &&\text{ for } t =2,&\nonumber\\
    \left\|w_{n}^{(t)} -  w_{i}^{(t)} \right\|^2 &= \frac{\eta_t^2}{n^2} \left[ (1-\beta) B_i^{(t)}  \right] &&\text{ for } t =1,&
\end{align*}
from Lemma \ref{lem_bound_term_1}, we have 
\begin{align*}
    D_{i}^{(t-1)}(w_{n}^{(t)}; w_{i}^{(t-1)}) 
    &\leq L\left\|w_{n}^{(t)} - w_{0}^{(t)}\right\|^2 + L \left\| w_{n}^{(t-1)} - w_{i}^{(t-1)} \right\|^2\nonumber\\
    &\leq L \frac{\eta_t^2}{n^2} \left[  \beta B_0^{(t-1)} + (1-\beta) B_0^{(t)}\right] + L \frac{\eta_{t-1}^2}{n^2} \left[ (1-\beta) B_i^{(t-1)}\right].
\end{align*}
Summing up the expression from $i := 0$ to $i := n-1$ and taking expectation, we get 
\begin{align*}
    &\quad \sum_{i=0}^{n-1}\E \left [ D_{i}^{(t-1)}(w_{n}^{(t)}; w_{i}^{(t-1)}) \right]\nonumber\\
    &\leq L \frac{\eta_t^2}{n} \left[  \beta \E \left [B_0^{(t-1)}\right] + (1-\beta) \E \left [B_0^{(t)}\right]\right]  
    + L \frac{\eta_{t-1}^2}{n^2} \left[ (1-\beta) \sum_{i=0}^{n-1}\E \left [ B_i^{(t-1)}\right]\right]\nonumber\\
    &\overset{(a)}{\leq} L \frac{\eta_t^2}{n} \left[  4\beta nL \cdot \E[C_{t-1}] + 4(1-\beta) nL \cdot \E[C_{t}]\right] \nonumber\\
    & \quad \ + L \frac{\eta_{t-1}^2}{n^2} \left[ (1-\beta) \left[ 4n^2 L\cdot \E[C_{t-1}]  +  \frac{2n^2\sigma^2}{3}  \right]\right]\nonumber\\
    &\leq 4L^2 \eta_t^2 \left[  \beta \cdot \E[C_{t-1}] + (1-\beta)  \cdot \E[C_{t}]\right] \nonumber\\
    & \quad \ + 4L^2\eta_{t-1}^2 \left[  (1-\beta) \E[C_{t-1}]   \right]+   (1-\beta) L \frac{\eta_{t-1}^2}{n^2} \frac{2n^2\sigma^2}{3}\nonumber\\
    &\leq 4L^2 \eta_t^2 F_t + 4L^2\eta_{t-1}^2 F_{t-1}+  \frac{2}{3} \eta_{t-1}^2(1-\beta) L\sigma^2,
\end{align*}
where $(a)$ follows from the results of Lemma \ref{lem_bound_Bit}.
Hence the statement of Lemma \ref{lem_bound_term_2} is true for all $t \geq 2$.
\end{proof}

\section{Proofs of Lemma \ref{lem_smg_convex} and Theorem \ref{thm_smg_st_convex}}
We now provide the full proof of Lemma~\ref{lem_smg_convex} and Theorem \ref{thm_smg_st_convex} in the main text.

\subsection{Proof of Lemma \ref{lem_smg_convex}: Key bounds}
\begin{proof}
First, let us note that $\tilde{w}_{t-1} = w_{0}^{(t)}$ for $t=1, \dots, T$. From the update \eqref{update_epoch_02}, we have the following for $t\geq 2$:
\allowdisplaybreaks
\begin{align*}
    & \quad \|\tilde{w}_{t-1} - w_*\|^2 \nonumber\\
    &= \|w_{0}^{(t)} - w_*\|^2\nonumber\\
    &= \left\|w_{n}^{(t)} + \frac{\eta_t}{n}  \sum_{i=0}^{n-1} \Big( \beta g_{i}^{(t-1)} + (1-\beta)   g_{i}^{(t)} \Big)- w_*\right\|^2\nonumber\\
    &= \left\|w_{n}^{(t)} - w_*\right\|^2 + 2\frac{\eta_t}{n} \left( w_{n}^{(t)} - w_*\right)^\top   \sum_{i=0}^{n-1} \Big( \beta g_{i}^{(t-1)} + (1-\beta)   g_{i}^{(t)} \Big) \nonumber\\&\quad+\left\| \frac{\eta_t}{n}  \sum_{i=0}^{n-1} \Big( \beta g_{i}^{(t-1)} + (1-\beta)   g_{i}^{(t)} \Big)\right\|^2\nonumber\\
    &\geq  \left\|w_{n}^{(t)} - w_*\right\|^2 + 2\frac{\eta_t}{n} \left( w_{n}^{(t)} - w_*\right)^\top   \sum_{i=0}^{n-1} \Big( \beta g_{i}^{(t-1)} + (1-\beta)   g_{i}^{(t)} \Big) \nonumber\\
    &=  \left\|w_{n}^{(t)} - w_*\right\|^2 + 2\frac{\eta_t}{n}  \sum_{i=0}^{n-1} \left[ \beta \left( w_{n}^{(t)} - w_*\right)^\top   g_{i}^{(t-1)} + (1-\beta) \left( w_{n}^{(t)} - w_*\right)^\top    g_{i}^{(t)} \right] \nonumber\\
    &=\left\|w_{n}^{(t)} - w_*\right\|^2 \nonumber\\&+ \frac{2\eta_t}{n}  \sum_{i=0}^{n-1} \beta \left( f_{i}^{(t-1)} ( w_{n}^{(t)} )- f_{i}^{(t-1)}( w_*) + D_{i}^{(t-1)}(w_*; w_{i}^{(t-1)}) - D_{i}^{(t-1)}(w_{n}^{(t)}; w_{i}^{(t-1)}) \right) \nonumber\\ 
    &  + \frac{2\eta_t}{n}  \sum_{i=0}^{n-1} (1-\beta) \left( f_{i}^{(t)} ( w_{n}^{(t)} )- f_{i}^{(t)}( w_*) + D_{i}^{(t)}(w_*; w_{i}^{(t)}) - D_{i}^{(t)}(w_{n}^{(t)}; w_{i}^{(t)}) \right) ,
\end{align*}
where the last line follows from the following inequality: 
\begin{align*}
\hspace{-2ex}
    \left( w_1 - w_2\right)^\top   \nabla f_{i}^{(t)} (w_3) = f_{i}^{(t)}(w_1) - f_{i}^{(t)}(w_2) + D_{i}^{(t)} (w_2 - w_3) - D_{i}^{(t)}(w_1 - w_3).
\hspace{-2ex}    
\end{align*}

Now note that $\sum_{i=0}^{n-1} f_{i}^{(t)} (\cdot) = F (\cdot)$, we have
\begin{align*}
    \|\tilde{w}_{t-1} - w_*\|^2 
    &\geq   \left\|w_{n}^{(t)} - w_*\right\|^2 \nonumber\\&\quad+ 2\beta \eta_t [F( w_{n}^{(t)} ) - F(w_*)] \nonumber\\&\quad+ 2\frac{\eta_t}{n}  \sum_{i=0}^{n-1} \left[ \beta \left( D_{i}^{(t-1)}(w_*; w_{i}^{(t-1)}) - D_{i}^{(t-1)}(w_{n}^{(t)}; w_{i}^{(t-1)}) \right) \right]\nonumber\\
    & \quad + 2(1-\beta) \eta_t [F( w_{n}^{(t)} ) - F(w_*)] \nonumber\\&\quad+ 2\frac{\eta_t}{n}  \sum_{i=0}^{n-1} \left[ (1-\beta) \left( D_{i}^{(t)}(w_*; w_{i}^{(t)}) - D_{i}^{(t)}(w_{n}^{(t)}; w_{i}^{(t)}) \right) \right]\nonumber\\
    &\overset{\eqref{define_C}}{=}  \left\|w_{n}^{(t)} - w_*\right\|^2 + 2 \eta_t [F( w_{n}^{(t)} ) - F(w_*)] \nonumber\\&\quad+ 2\frac{\eta_t}{n}   \left[ \beta \left( C_{t-1} - \sum_{i=0}^{n-1} D_{i}^{(t-1)}(w_{n}^{(t)}; w_{i}^{(t-1)}) \right) \right]\nonumber\\
    & \quad  + 2\frac{\eta_t}{n} \left[ (1-\beta) \left( C_t- \sum_{i=0}^{n-1}D_{i}^{(t)}(w_{n}^{(t)}; w_{i}^{(t)}) \right) \right],
\end{align*}
where in the last equation we use the definition of $C_t$.
Taking expectation, we get
\begin{align*}
\hspace{-2ex}
    &\quad \E \left[\|\tilde{w}_{t-1} - w_*\|^2 \right]\nonumber\\
    &\geq  \E \left[\left\|w_{n}^{(t)} - w_*\right\|^2  \right]+ 2 \E \left[\eta_t [F( w_{n}^{(t)} ) - F(w_*)] \right] \nonumber\\
    &\quad + 2\frac{\eta_t}{n}   \left[ \beta \left( \E \left[C_{t-1} \right] - \sum_{i=0}^{n-1} \E \left[D_{i}^{(t-1)}(w_{n}^{(t)}; w_{i}^{(t-1)})  \right]\right) \right]  \nonumber\\&\quad
    + 2\frac{\eta_t}{n} \left[ (1-\beta) \left( \E \left[C_t \right]- \sum_{i=0}^{n-1}\E \left[D_{i}^{(t)}(w_{n}^{(t)}; w_{i}^{(t)})  \right]\right) \right]\nonumber\\
    &\overset{(a)}{\geq}  \E \left[\left\|w_{n}^{(t)} - w_*\right\|^2  \right]+ 2 \E \left[\eta_t [F( w_{n}^{(t)} ) - F(w_*)] \right] \nonumber\\
    &\quad + 2\frac{\eta_t}{n} F_t - 2\beta\frac{\eta_t}{n}    \left[4L^2 \eta_t^2 F_t + 4L^2\eta_{t-1}^2 F_{t-1} + \frac{2}{3} \eta_{t-1}^2(1-\beta) L\sigma^2 \right]   \nonumber\\&\quad
    - 2(1-\beta) \frac{\eta_t}{n}  \left[4 L^2\eta_t^2 F_t + \frac{2}{3}\eta_t^2  (1-\beta)L\sigma^2   \right]\nonumber\\
    &\geq  \E \left[\left\|w_{n}^{(t)} - w_*\right\|^2  \right]+ 2 \E \left[\eta_t [F( w_{n}^{(t)} ) - F(w_*)] \right] + \frac{2\eta_t}{n} F_t\nonumber\\
    &\quad  - \frac{2\eta_t}{n} \cdot 4 L^2 \eta_t^2 F_t - \frac{2\eta_t}{n} \cdot 4\beta L^2\eta_{t-1}^2 F_{t-1} \nonumber\\
    &\quad - \frac{4\eta_t}{3n}  (1-\beta) L\sigma^2 \left(\beta\eta_{t-1}^2 +(1-\beta)\eta_t^2  \right),
\hspace{-2ex}    
\end{align*}
where $(a)$ comes from the definition \eqref{define_F} of $F_t$ and the results of Lemma \ref{lem_bound_term_1} and \ref{lem_bound_term_2} for $t \geq 2$.
Note that $w_{n}^{(t)} = \tilde{w}_{t}$, $\eta_t \leq \frac{1}{2L\sqrt{K}}$ and $4 L^2 \eta_t^2 \leq \frac{1}{K}$, we further have
\begin{align*}
     2 \eta_t &\E \left[ F( \tilde{w}_{t}) - F(w_*)\right] \leq  \E \left[\|\tilde{w}_{t-1} - w_*\|^2 \right] -  \E \left[\left\|\tilde{w}_{t}- w_*\right\|^2  \right] - \frac{2\eta_t}{n} F_t \nonumber\\
     &\quad + \frac{2\eta_t}{n} \frac{1}{K} F_t + \frac{2\eta_t}{n} \beta \frac{1}{K} F_{t-1} + \frac{4L\sigma^2 }{3n}  \beta(1-\beta)  \eta_t\eta_{t-1}^2  +  \frac{4L\sigma^2}{3n} (1-\beta)^2\eta_t^3 .
\end{align*}
Now we consider the case $t=1$.
From the update \eqref{update_epoch_03}, we have the following for $t=1$:
\allowdisplaybreaks
\begin{align*}
    & \quad \|\tilde{w}_{t-1} - w_*\|^2 \nonumber\\
    &= \|w_{0}^{(t)} - w_*\|^2\nonumber\\
    &= \left\|w_{n}^{(t)} + \frac{\eta_t}{n}  \sum_{i=0}^{n-1} (1-\beta)   g_{i}^{(t)} - w_*\right\|^2\nonumber\\
    &= \left\|w_{n}^{(t)} - w_*\right\|^2 + 2\frac{\eta_t}{n} \left( w_{n}^{(t)} - w_*\right)^\top   \sum_{i=0}^{n-1}  (1-\beta)   g_{i}^{(t)}  +\left\| \frac{\eta_t}{n}  \sum_{i=0}^{n-1} \Big(  (1-\beta)   g_{i}^{(t)} \Big)\right\|^2\nonumber\\
    &\geq  \left\|w_{n}^{(t)} - w_*\right\|^2 + 2\frac{\eta_t}{n} \left( w_{n}^{(t)} - w_*\right)^\top   \sum_{i=0}^{n-1}   (1-\beta)   g_{i}^{(t)}  \nonumber\\
    &=  \left\|w_{n}^{(t)} - w_*\right\|^2 + 2\frac{\eta_t}{n}  \sum_{i=0}^{n-1} \left[ (1-\beta) \left( w_{n}^{(t)} - w_*\right)^\top    g_{i}^{(t)} \right] \nonumber\\
    &= \left\|w_{n}^{(t)} - w_*\right\|^2 \nonumber\\
     &\quad + 2\frac{\eta_t}{n}  \sum_{i=0}^{n-1} \left[ (1-\beta) \left( f_{i}^{(t)} ( w_{n}^{(t)} )- f_{i}^{(t)}( w_*) + D_{i}^{(t)}(w_*; w_{i}^{(t)}) - D_{i}^{(t)}(w_{n}^{(t)}; w_{i}^{(t)}) \right) \right],
\end{align*}
where the last line follows from the following inequality: 
\begin{align*}
    \left( w_1 - w_2\right)^\top   \nabla f_{i}^{(t)} (w_3) = f_{i}^{(t)}(w_1) - f_{i}^{(t)}(w_2) + D_{i}^{(t)} (w_2 - w_3) - D_{i}^{(t)}(w_1 - w_3).
\end{align*}
Now note that $\sum_{i=0}^{n-1} f_{i}^{(t)} (\cdot) = F (\cdot)$, we have
\begin{align*}
    \quad \|\tilde{w}_{t-1} - w_*\|^2 &\geq   \left\|w_{n}^{(t)} - w_*\right\|^2   + 2(1-\beta) \eta_t [F( w_{n}^{(t)} ) - F(w_*)] \nonumber\\
     &\quad+ 2\frac{\eta_t}{n}  \sum_{i=0}^{n-1} \left[ (1-\beta) \left( D_{i}^{(t)}(w_*; w_{i}^{(t)}) - D_{i}^{(t)}(w_{n}^{(t)}; w_{i}^{(t)}) \right) \right]\nonumber\\
    &\overset{\eqref{define_C}}{=}  \left\|w_{n}^{(t)} - w_*\right\|^2 + 2 \eta_t (1-\beta) [F( w_{n}^{(t)} ) - F(w_*)] \nonumber\\
     &\quad
    + 2\frac{\eta_t}{n} \left[ (1-\beta) \left( C_t- \sum_{i=0}^{n-1}D_{i}^{(t)}(w_{n}^{(t)}; w_{i}^{(t)}) \right) \right].
\end{align*}
Taking the full expectation, we get
\begin{align*}
    \E \left[\|\tilde{w}_{t-1} - w_*\|^2 \right]
    &\geq  \E \left[\left\|w_{n}^{(t)} - w_*\right\|^2  \right]+ 2 (1-\beta) \E \left[\eta_t [F( w_{n}^{(t)} ) - F(w_*)] \right] \nonumber\\
     &\quad
    + 2\frac{\eta_t}{n} \left[ (1-\beta) \left( \E \left[C_t \right]- \sum_{i=0}^{n-1}\E \left[D_{i}^{(t)}(w_{n}^{(t)}; w_{i}^{(t)})  \right]\right) \right]\nonumber\\
    &\overset{(a)}{\geq}  \E \left[\left\|w_{n}^{(t)} - w_*\right\|^2  \right]+ 2 (1-\beta)\E \left[\eta_t [F( w_{n}^{(t)} ) - F(w_*)] \right]\nonumber\\
     &\quad  + 2\frac{\eta_t}{n} F_t 
    - 2(1-\beta) \frac{\eta_t}{n}  \left[4 L^2\eta_t^2 F_t + \frac{2}{3}\eta_t^2  (1-\beta)L\sigma^2   \right]\nonumber\\
    &\geq  \E \left[\left\|w_{n}^{(t)} - w_*\right\|^2  \right]+ 2 (1-\beta)\E \left[\eta_t [F( w_{n}^{(t)} ) - F(w_*)] \right]\nonumber\\
     &\quad  + 2\frac{\eta_t}{n} F_t 
    - 2(1-\beta) \frac{\eta_t}{n}  \left[4 L^2\eta_t^2 F_t + \frac{2}{3}\eta_t^2  (1-\beta)L\sigma^2   \right],
\end{align*}
where $(a)$ comes from the definition \eqref{define_F} of $F_t$ and the results of Lemma \ref{lem_bound_term_1} for $t \geq 1$.
\begin{align*}
    \E \left[\|\tilde{w}_{t-1} - w_*\|^2 \right]
    &\geq  \E \left[\left\|w_{n}^{(t)} - w_*\right\|^2  \right]+ 2 (1-\beta)\E \left[\eta_t [F( w_{n}^{(t)} ) - F(w_*)] \right]\nonumber\\
     &\quad  + \frac{2\eta_t}{n} F_t 
    - 2(1-\beta) \frac{\eta_t}{n}  \left[K F_t + \frac{2}{3}\eta_t^2  (1-\beta)L\sigma^2   \right].
\end{align*}
Finally, we have
\begin{align*}
    2 \eta_t(1-\beta)\E \left[ F( \tilde{w}_{t} ) - F(w_*) \right]  &\leq  \E \left[\|\tilde{w}_{t-1} - w_*\|^2 \right] -  \E \left[\left\| \tilde{w}_{t} - w_*\right\|^2  \right] - \frac{2\eta_t}{n} F_t  \nonumber\\
    & \quad + (1-\beta) \frac{2\eta_t}{n} \frac{1}{K} F_t +  \frac{4L\sigma^2}{3n} (1-\beta)^2\eta_t^3. 
\end{align*}
This completes our proof.
\end{proof}

\subsection{Proof of Theorem \ref{thm_smg_st_convex}: Strongly convex objectives}

\begin{proof}
From the results of Lemma \ref{lem_smg_convex}, for $\eta_t \leq \frac{1}{2L\sqrt{K}}$ we have
\begin{align*}
     2 \eta_t \E \left[ F( \tilde{w}_{t}) - F(w_*)\right] &\leq  \E \left[\|\tilde{w}_{t-1} - w_*\|^2 \right] -  \E \left[\left\|\tilde{w}_{t}- w_*\right\|^2  \right] - \frac{2\eta_t}{n} F_t \nonumber\\
     + \frac{2\eta_t}{n} \frac{1}{K} F_t + \frac{2\eta_t}{n} \beta \frac{1}{K} &F_{t-1} + \frac{4L\sigma^2 }{3n}  \beta(1-\beta)  \eta_t\eta_{t-1}^2  +  \frac{4L\sigma^2}{3n} (1-\beta)^2\eta_t^3, &t \geq 2\nonumber\\
    2 \eta_t(1-\beta)\E \left[ F( \tilde{w}_{t} ) - F(w_*) \right] & \leq  \E \left[\|\tilde{w}_{t-1} - w_*\|^2 \right] -  \E \left[\left\| \tilde{w}_{t} - w_*\right\|^2  \right] - \frac{2\eta_t}{n} F_t  \nonumber\\
    &+ (1-\beta) \frac{2\eta_t}{n} \frac{1}{K} F_t +  \frac{4L\sigma^2}{3n} (1-\beta)^2\eta_t^3, &t = 1.
\end{align*}
Since $F$ is $\mu$-strongly convex, i.e. $F(w) - F(w_*) \geq \frac{\mu}{2} \| w - w_* \|^2$, for $\forall w \in \mathbb{R}^d$, we have the following:
\begin{align*}
     \mu \eta_t \E \left[\left\|\tilde{w}_{t}- w_*\right\|^2  \right] &\leq  \E \left[\|\tilde{w}_{t-1} - w_*\|^2 \right] -  \E \left[\left\|\tilde{w}_{t}- w_*\right\|^2  \right] - \frac{2\eta_t}{n} F_t &\nonumber\\
     + \frac{2\eta_t}{n} \frac{1}{K} F_t + \frac{2\eta_t}{n}& \beta \frac{1}{K} F_{t-1} + \frac{4L\sigma^2 }{3n}   \beta(1-\beta)  \eta_t\eta_{t-1}^2  +  \frac{4L\sigma^2}{3n} (1-\beta)^2\eta_t^3 , & t \geq 2\nonumber\\
    \mu \eta_t (1-\beta)\E \left[\left\|\tilde{w}_{t}- w_*\right\|^2 \right]&\leq  \E \left[\|\tilde{w}_{t-1} - w_*\|^2 \right] -  \E \left[\left\| \tilde{w}_{t} - w_*\right\|^2  \right] - \frac{2\eta_t}{n} F_t & \nonumber\\
     &+ (1-\beta) \frac{2\eta_t}{n} \frac{1}{K} F_t +  \frac{4L\sigma^2}{3n} (1-\beta)^2\eta_t^3, &t = 1.
\end{align*}
Hence, one has 
\begin{align*}
     (1+\mu \eta_t )&\E \left[\left\|\tilde{w}_{t}- w_*\right\|^2  \right] \leq  \E \left[\|\tilde{w}_{t-1} - w_*\|^2 \right]  - \frac{2\eta_t}{n} F_t  + \frac{2\eta_t}{n} \frac{1}{K} F_t\nonumber\\
     & + \frac{2\eta_t}{n} \beta \frac{1}{K} F_{t-1} + \frac{4L\sigma^2 }{3n}  \beta(1-\beta)  \eta_t\eta_{t-1}^2  +  \frac{4L\sigma^2}{3n} (1-\beta)^2\eta_t^3 , &t \geq 2,\nonumber\\
    (1+\mu \eta_t (1-\beta))&\E \left[\left\|\tilde{w}_{t}- w_*\right\|^2 \right]\leq  \E \left[\|\tilde{w}_{t-1} - w_*\|^2 \right]  - \frac{2\eta_t}{n} F_t 
    + \frac{2\eta_t}{n} \frac{1}{K} F_t \nonumber\\
     & +  \frac{4L\sigma^2}{3n} (1-\beta)^2\eta_t^3, &t = 1.
\end{align*}
Now we adopt the following conventional notations: 
\begin{align}\label{define_phi}
\phi_t := \left\{\begin{array}{ll}
1+\mu \eta_t  &\text{if} \ t \geq 2,\nonumber\\
1+\mu \eta_t (1-\beta) &\text{if} \ t = 1.
\end{array}\right.
\end{align} 
and for every $t\geq 1$:
\begin{align*}
     H_t&= - \frac{2\eta_t}{n} F_t  + \frac{2\eta_t}{n} \frac{1}{K} F_t + \frac{2\eta_t}{n} \beta \frac{1}{K} F_{t-1} + \frac{4L\sigma^2 }{3n}  \beta(1-\beta)  \eta_t\eta_{t-1}^2 +  \frac{4L\sigma^2}{3n} (1-\beta)^2\eta_t^3 ,
\end{align*}
where we use the convention that $F_0 = 0$.
Hence we have the following recursive equation for all $t\geq 1$:
\begin{align*}
    \phi_t\E \left[\left\|\tilde{w}_{t}- w_*\right\|^2  \right] &\leq  \E \left[\|\tilde{w}_{t-1} - w_*\|^2 \right] + H_t ,
\end{align*}

Unrolling this recursive equation, we have 
\begin{align*}
    \prod_{t=1}^{T} \phi_t\E \left[\left\|\tilde{w}_{T}- w_*\right\|^2  \right] &\leq 
    \prod_{t=1}^{T-1} \phi_t \left( \E \left[\left\|\tilde{w}_{T-1}- w_*\right\|^2 \right]+ H_{T} \right) \nonumber\\
    &\leq 
    \prod_{t=1}^{T-1} \phi_t  \E \left[\left\|\tilde{w}_{T-1}- w_*\right\|^2 \right]  + \prod_{t=1}^{T-1} \phi_t  H_{T}\nonumber\\ 
    &\leq 
    \prod_{t=1}^{T-2} \phi_t \E \left[\left\|\tilde{w}_{T-2}- w_*\right\|^2 \right]  +  \prod_{t=1}^{T-2} H_{T-1}+ \prod_{t=1}^{T-1} \phi_t  H_{T}\nonumber\\ 
    &\leq 
    \E \left[\left\|\tilde{w}_{0}- w_*\right\|^2 \right] + H_{1} + \phi_1 H_2 +  \prod_{t=1}^{T-2}\phi_t H_{T-1}+ \prod_{t=1}^{T-1} \phi_t  H_{T}\nonumber\\ 
    &=
    \E \left[\left\|\tilde{w}_{0}- w_*\right\|^2 \right] + \sum_{j=1}^T  H_{j}\prod_{t=1}^{j-1} \phi_t ,
\end{align*}
where 
\begin{align*}
     H_j&= \frac{2}{n} \left(\frac{1}{K} -1\right)\eta_j F_j + \frac{2}{n} \alpha \beta \frac{1}{K} \eta_{j-1}F_{j-1} + \frac{4L\sigma^2 }{3n} \alpha \beta(1-\beta) \eta_{j-1}^3  +  \frac{4L\sigma^2}{3n} (1-\beta)^2\eta_j^3 .
\end{align*}
We bound the last term: 
\begin{align*}
    \sum_{j=1}^T   H_{j}\prod_{t=1}^{j-1} \phi_t 
    & =  \frac{2}{n} \left(\frac{1}{K} -1\right) \sum_{j=1}^T  \eta_j F_j\prod_{t=1}^{j-1} \phi_t + \frac{2}{n} \alpha \beta \frac{1}{K}  \sum_{j=1}^T  \eta_{j-1}F_{j-1}\prod_{t=1}^{j-1} \phi_t\nonumber\\
    & \quad+ \frac{4L\sigma^2 }{3n} \alpha \beta(1-\beta)  \sum_{j=1}^T  \eta_{j-1}^3 \prod_{t=1}^{j-1}\phi_t +  \frac{4L\sigma^2}{3n} (1-\beta)^2  \sum_{j=1}^T  \eta_j^3\prod_{t=1}^{j-1} \phi_t \nonumber\\
    & =  \frac{2}{n} \left(\frac{1}{K} -1\right) \sum_{j=1}^T  \eta_j F_j\prod_{t=1}^{j-1} \phi_t + \frac{2}{n} \alpha \beta \frac{1}{K}  \sum_{j=0}^{T-1}  \eta_{j}F_{j}\prod_{t=1}^{j} \phi_t\nonumber\\
    & \quad+ \frac{4L\sigma^2 }{3n} \alpha \beta(1-\beta)  \sum_{j=0}^{T-1}  \eta_{j}^3 \prod_{t=1}^{j}\phi_t +  \frac{4L\sigma^2}{3n} (1-\beta)^2  \sum_{j=1}^T  \eta_j^3\prod_{t=1}^{j-1} \phi_t \nonumber\\
    & \leq \frac{2}{n}\sum_{j=1}^T  \eta_j F_j\prod_{t=1}^{j-1} \phi_t \left( \frac{1}{K} -1 + \alpha \beta \frac{1}{K} \phi_j \right) \nonumber\\
    & \quad +  \frac{4L\sigma^2}{3n}  \sum_{j=1}^T  \eta_j^3\prod_{t=1}^{j-1} \phi_t \left( (1-\beta)^2 +  \alpha \beta(1-\beta) \phi_j \right)\nonumber\\
    & \leq \frac{4L\sigma^2(1-\beta)}{3n}  \sum_{j=1}^T  \eta_j^3\prod_{t=1}^{j-1} \phi_t \left( 1-\beta +  \alpha \beta \phi_j \right),
\end{align*}
where we use the fact that $K = 1 + \alpha\beta (1 +\mu \max_t \eta_t)$.
Plugging the last bound to the previous estimate we get:
\begin{align*}
    &\quad \E \left[\left\|\tilde{w}_{T}- w_*\right\|^2  \right] \nonumber\\&  \leq \frac{1}{\prod_{t=1}^{T} \phi_t}
    \E \left[\left\|\tilde{w}_{0}- w_*\right\|^2 \right] +  \frac{1}{\prod_{t=1}^{T} \phi_t}\sum_{j=1}^T  H_{j}\prod_{t=1}^{j-1} \phi_t \nonumber\\
     &  \leq \frac{1}{\prod_{t=1}^{T} \phi_t}
    \E \left[\left\|\tilde{w}_{0}- w_*\right\|^2 \right] +   \frac{4L\sigma^2(1-\beta)}{3n}  \sum_{j=1}^T  \eta_j^3\frac{\prod_{t=1}^{j-1} \phi_t \left( 1-\beta +  \alpha \beta \phi_j \right)}{\prod_{t=1}^{T} \phi_t}\nonumber\\
     &  \leq \frac{1}{\prod_{t=1}^{T} \phi_t}
    \E \left[\left\|\tilde{w}_{0}- w_*\right\|^2 \right] +   \frac{4L\sigma^2(1-\beta)}{3n}  \sum_{j=1}^T  \eta_j^3\frac{ \left( 1-\beta +  \alpha \beta \phi_j \right)}{\prod_{t=j}^{T} \phi_t}.
\end{align*}
We use the fact that $\frac{1}{\phi_1} = \frac{1}{1 + (1-\beta) \mu \eta_1} \leq \frac{1}{(1-\beta)(1 +  \mu \eta_1)} $ and have
\begin{align*}
    &\E \left[\left\|\tilde{w}_{T}- w_*\right\|^2  \right] \nonumber\\
    &\leq  \frac{\E \left[\left\|\tilde{w}_{0}- w_*\right\|^2 \right]}{(1+(1-\beta)\mu\eta_1)\prod_{t=1}^{T-1} (1+\mu\eta_t)}
     +   \frac{4L\sigma^2}{3n}  \sum_{j=1}^T  \eta_j^3\frac{ \left( 1-\beta +  \alpha \beta (1 + \mu \max_t \eta_t)\right)}{\prod_{t=j}^{T} (1+\mu\eta_t)}\nonumber\\
    &\leq  \frac{\E \left[\left\|\tilde{w}_{0}- w_*\right\|^2 \right]}{(1-\beta) \prod_{t=1}^{T} (1+\mu\eta_t)}
     +   \frac{4L\sigma^2}{3n}  \sum_{j=1}^T  \eta_j^3\frac{ \left( 1-\beta +  \alpha \beta (1 + \mu \max_t \eta_t)\right)}{\prod_{t=j}^{T} (1+\mu\eta_t)}.
\end{align*}
This completes our proof.
\end{proof}
\end{appendices}
\bibliography{ref}


\begin{thebibliography}{41}
\ifx \bisbn   \undefined \def \bisbn  #1{ISBN #1}\fi
\ifx \binits  \undefined \def \binits#1{#1}\fi
\ifx \bauthor  \undefined \def \bauthor#1{#1}\fi
\ifx \batitle  \undefined \def \batitle#1{#1}\fi
\ifx \bjtitle  \undefined \def \bjtitle#1{#1}\fi
\ifx \bvolume  \undefined \def \bvolume#1{\textbf{#1}}\fi
\ifx \byear  \undefined \def \byear#1{#1}\fi
\ifx \bissue  \undefined \def \bissue#1{#1}\fi
\ifx \bfpage  \undefined \def \bfpage#1{#1}\fi
\ifx \blpage  \undefined \def \blpage #1{#1}\fi
\ifx \burl  \undefined \def \burl#1{\textsf{#1}}\fi
\ifx \doiurl  \undefined \def \doiurl#1{\url{https://doi.org/#1}}\fi
\ifx \betal  \undefined \def \betal{\textit{et al.}}\fi
\ifx \binstitute  \undefined \def \binstitute#1{#1}\fi
\ifx \binstitutionaled  \undefined \def \binstitutionaled#1{#1}\fi
\ifx \bctitle  \undefined \def \bctitle#1{#1}\fi
\ifx \beditor  \undefined \def \beditor#1{#1}\fi
\ifx \bpublisher  \undefined \def \bpublisher#1{#1}\fi
\ifx \bbtitle  \undefined \def \bbtitle#1{#1}\fi
\ifx \bedition  \undefined \def \bedition#1{#1}\fi
\ifx \bseriesno  \undefined \def \bseriesno#1{#1}\fi
\ifx \blocation  \undefined \def \blocation#1{#1}\fi
\ifx \bsertitle  \undefined \def \bsertitle#1{#1}\fi
\ifx \bsnm \undefined \def \bsnm#1{#1}\fi
\ifx \bsuffix \undefined \def \bsuffix#1{#1}\fi
\ifx \bparticle \undefined \def \bparticle#1{#1}\fi
\ifx \barticle \undefined \def \barticle#1{#1}\fi
\bibcommenthead
\ifx \bconfdate \undefined \def \bconfdate #1{#1}\fi
\ifx \botherref \undefined \def \botherref #1{#1}\fi
\ifx \url \undefined \def \url#1{\textsf{#1}}\fi
\ifx \bchapter \undefined \def \bchapter#1{#1}\fi
\ifx \bbook \undefined \def \bbook#1{#1}\fi
\ifx \bcomment \undefined \def \bcomment#1{#1}\fi
\ifx \oauthor \undefined \def \oauthor#1{#1}\fi
\ifx \citeauthoryear \undefined \def \citeauthoryear#1{#1}\fi
\ifx \endbibitem  \undefined \def \endbibitem {}\fi
\ifx \bconflocation  \undefined \def \bconflocation#1{#1}\fi
\ifx \arxivurl  \undefined \def \arxivurl#1{\textsf{#1}}\fi
\csname PreBibitemsHook\endcsname

\bibitem[\protect\citeauthoryear{Tran et~al.}{2021}]{smg_tran21b}
\begin{botherref}
\oauthor{\bsnm{Tran}, \binits{T.H.}},
\oauthor{\bsnm{Nguyen}, \binits{L.M.}},
\oauthor{\bsnm{Tran-Dinh}, \binits{Q.}}:
{SMG}: A Shuffling Gradient-Based Method with Momentum.
PMLR
(2021).
\url{https://proceedings.mlr.press/v139/tran21b.html}
\end{botherref}
\endbibitem

\bibitem[\protect\citeauthoryear{Sra et~al.}{2012}]{sra2012optimization}
\begin{botherref}
\oauthor{\bsnm{Sra}, \binits{S.}},
\oauthor{\bsnm{Nowozin}, \binits{S.}},
\oauthor{\bsnm{Wright}, \binits{S.J.}}:
{O}ptimization for {M}achine {L}earning.
MIT Press
(2012)
\end{botherref}
\endbibitem

\bibitem[\protect\citeauthoryear{Bottou et~al.}{2018}]{Bottou2018}
\begin{barticle}
\bauthor{\bsnm{Bottou}, \binits{L.}},
\bauthor{\bsnm{Curtis}, \binits{F.E.}},
\bauthor{\bsnm{Nocedal}, \binits{J.}}:
\batitle{{O}ptimization {M}ethods for {L}arge-{S}cale {M}achine {L}earning}.
\bjtitle{SIAM Rev.}
\bvolume{60}(\bissue{2}),
\bfpage{223}--\blpage{311}
(\byear{2018})
\end{barticle}
\endbibitem

\bibitem[\protect\citeauthoryear{Robbins and Monro}{1951}]{RM1951}
\begin{barticle}
\bauthor{\bsnm{Robbins}, \binits{H.}},
\bauthor{\bsnm{Monro}, \binits{S.}}:
\batitle{A stochastic approximation method}.
\bjtitle{The Annals of Mathematical Statistics}
\bvolume{22}(\bissue{3}),
\bfpage{400}--\blpage{407}
(\byear{1951})
\end{barticle}
\endbibitem

\bibitem[\protect\citeauthoryear{Duchi et~al.}{2011}]{AdaGrad}
\begin{barticle}
\bauthor{\bsnm{Duchi}, \binits{J.}},
\bauthor{\bsnm{Hazan}, \binits{E.}},
\bauthor{\bsnm{Singer}, \binits{Y.}}:
\batitle{Adaptive subgradient methods for online learning and stochastic optimization}.
\bjtitle{Journal of Machine Learning Research}
\bvolume{12},
\bfpage{2121}--\blpage{2159}
(\byear{2011})
\end{barticle}
\endbibitem

\bibitem[\protect\citeauthoryear{Kingma and Ba}{2014}]{Kingma2014}
\begin{botherref}
\oauthor{\bsnm{Kingma}, \binits{D.P.}},
\oauthor{\bsnm{Ba}, \binits{J.}}:
{ADAM}: {A} {M}ethod for {S}tochastic {O}ptimization.
Proceedings of the 3rd International Conference on Learning Representations (ICLR)
\textbf{abs/1412.6980}
(2014)
\end{botherref}
\endbibitem

\bibitem[\protect\citeauthoryear{Nguyen et~al.}{2018}]{Nguyen2018_sgdhogwild}
\begin{bchapter}
\bauthor{\bsnm{Nguyen}, \binits{L.}},
\bauthor{\bsnm{Nguyen}, \binits{P.H.}},
\bauthor{\bsnm{Dijk}, \binits{M.}},
\bauthor{\bsnm{Richtarik}, \binits{P.}},
\bauthor{\bsnm{Scheinberg}, \binits{K.}},
\bauthor{\bsnm{Takac}, \binits{M.}}:
\bctitle{{SGD} and {H}ogwild! convergence without the bounded gradients assumption}.
In: \bbtitle{Proceedings of the 35th International Conference on Machine Learning-Volume 80},
pp. \bfpage{3747}--\blpage{3755}
(\byear{2018})
\end{bchapter}
\endbibitem

\bibitem[\protect\citeauthoryear{Bottou}{2009}]{bottou2009curiously}
\begin{bchapter}
\bauthor{\bsnm{Bottou}, \binits{L.}}:
\bctitle{Curiously fast convergence of some stochastic gradient descent algorithms}.
In: \bbtitle{Proceedings of the Symposium on Learning and Data Science, Paris},
vol. \bseriesno{8},
pp. \bfpage{2624}--\blpage{2633}
(\byear{2009})
\end{bchapter}
\endbibitem

\bibitem[\protect\citeauthoryear{Gürbüzbalaban et~al.}{2019}]{Gurbuzbalaban2019}
\begin{barticle}
\bauthor{\bsnm{Gürbüzbalaban}, \binits{M.}},
\bauthor{\bsnm{Ozdaglar}, \binits{A.}},
\bauthor{\bsnm{Parrilo}, \binits{P.A.}}:
\batitle{Why random reshuffling beats stochastic gradient descent}.
\bjtitle{Mathematical Programming}
(\byear{2019})
\doiurl{10.1007/s10107-019-01440-w}
\end{barticle}
\endbibitem

\bibitem[\protect\citeauthoryear{Haochen and Sra}{2019}]{haochen2019random}
\begin{bchapter}
\bauthor{\bsnm{Haochen}, \binits{J.}},
\bauthor{\bsnm{Sra}, \binits{S.}}:
\bctitle{Random shuffling beats sgd after finite epochs}.
In: \bbtitle{International Conference on Machine Learning},
pp. \bfpage{2624}--\blpage{2633}
(\byear{2019}).
\bcomment{PMLR}
\end{bchapter}
\endbibitem

\bibitem[\protect\citeauthoryear{Safran and Shamir}{2020}]{safran2020good}
\begin{bchapter}
\bauthor{\bsnm{Safran}, \binits{I.}},
\bauthor{\bsnm{Shamir}, \binits{O.}}:
\bctitle{How good is sgd with random shuffling?}
In: \bbtitle{Conference on Learning Theory},
pp. \bfpage{3250}--\blpage{3284}
(\byear{2020}).
\bcomment{PMLR}
\end{bchapter}
\endbibitem

\bibitem[\protect\citeauthoryear{Nagaraj et~al.}{2019}]{nagaraj2019sgd}
\begin{bchapter}
\bauthor{\bsnm{Nagaraj}, \binits{D.}},
\bauthor{\bsnm{Jain}, \binits{P.}},
\bauthor{\bsnm{Netrapalli}, \binits{P.}}:
\bctitle{Sgd without replacement: Sharper rates for general smooth convex functions}.
In: \bbtitle{International Conference on Machine Learning},
pp. \bfpage{4703}--\blpage{4711}
(\byear{2019})
\end{bchapter}
\endbibitem

\bibitem[\protect\citeauthoryear{Rajput et~al.}{2020}]{rajput2020closing}
\begin{bchapter}
\bauthor{\bsnm{Rajput}, \binits{S.}},
\bauthor{\bsnm{Gupta}, \binits{A.}},
\bauthor{\bsnm{Papailiopoulos}, \binits{D.}}:
\bctitle{Closing the convergence gap of sgd without replacement}.
In: \bbtitle{International Conference on Machine Learning},
pp. \bfpage{7964}--\blpage{7973}
(\byear{2020}).
\bcomment{PMLR}
\end{bchapter}
\endbibitem

\bibitem[\protect\citeauthoryear{Nguyen et~al.}{2021}]{nguyen2020unified}
\begin{botherref}
\oauthor{\bsnm{Nguyen}, \binits{L.M.}},
\oauthor{\bsnm{Tran-Dinh}, \binits{Q.}},
\oauthor{\bsnm{Phan}, \binits{D.T.}},
\oauthor{\bsnm{Nguyen}, \binits{P.H.}},
\oauthor{\bsnm{Van~Dijk}, \binits{M.}}:
A unified convergence analysis for shuffling-type gradient methods.
J. Mach. Learn. Res.
\textbf{22}(1)
(2021)
\end{botherref}
\endbibitem

\bibitem[\protect\citeauthoryear{Mishchenko et~al.}{2020}]{mishchenko2020random}
\begin{botherref}
\oauthor{\bsnm{Mishchenko}, \binits{K.}},
\oauthor{\bsnm{Khaled Ragab~Bayoumi}, \binits{A.}},
\oauthor{\bsnm{Richt{\'a}rik}, \binits{P.}}:
Random reshuffling: Simple analysis with vast improvements.
Advances in Neural Information Processing Systems
\textbf{33}
(2020)
\end{botherref}
\endbibitem

\bibitem[\protect\citeauthoryear{Ahn et~al.}{2020}]{ahn2020sgd}
\begin{botherref}
\oauthor{\bsnm{Ahn}, \binits{K.}},
\oauthor{\bsnm{Yun}, \binits{C.}},
\oauthor{\bsnm{Sra}, \binits{S.}}:
Sgd with shuffling: optimal rates without component convexity and large epoch requirements.
arXiv preprint arXiv:2006.06946
(2020)
\end{botherref}
\endbibitem

\bibitem[\protect\citeauthoryear{Reddi et~al.}{2019}]{reddi2019convergence}
\begin{botherref}
\oauthor{\bsnm{Reddi}, \binits{S.J.}},
\oauthor{\bsnm{Kale}, \binits{S.}},
\oauthor{\bsnm{Kumar}, \binits{S.}}:
On the convergence of adam and beyond.
arXiv preprint arXiv:1904.09237
(2019)
\end{botherref}
\endbibitem

\bibitem[\protect\citeauthoryear{Hu et~al.}{2009}]{NIPS_hu2009}
\begin{botherref}
\oauthor{\bsnm{Hu}, \binits{C.}},
\oauthor{\bsnm{Pan}, \binits{W.}},
\oauthor{\bsnm{Kwok}, \binits{J.}}:
Accelerated Gradient Methods for Stochastic Optimization and Online Learning.
Curran Associates, Inc.
(2009).
\url{https://proceedings.neurips.cc/paper/2009/file/ec5aa0b7846082a2415f0902f0da88f2-Paper.pdf}
\end{botherref}
\endbibitem

\bibitem[\protect\citeauthoryear{Cotter et~al.}{2011}]{NIPS_cotter2011}
\begin{botherref}
\oauthor{\bsnm{Cotter}, \binits{A.}},
\oauthor{\bsnm{Shamir}, \binits{O.}},
\oauthor{\bsnm{Srebro}, \binits{N.}},
\oauthor{\bsnm{Sridharan}, \binits{K.}}:
Better Mini-Batch Algorithms via Accelerated Gradient Methods.
Curran Associates, Inc.
(2011).
\url{https://proceedings.neurips.cc/paper/2011/file/b55ec28c52d5f6205684a473a2193564-Paper.pdf}
\end{botherref}
\endbibitem

\bibitem[\protect\citeauthoryear{Polyak}{1964}]{Polyak1964}
\begin{barticle}
\bauthor{\bsnm{Polyak}, \binits{B.T.}}:
\batitle{Some methods of speeding up the convergence of iteration methods}.
\bjtitle{{USSR} Computational Mathematics and Mathematical Physics}
\bvolume{4}(\bissue{5}),
\bfpage{1}--\blpage{17}
(\byear{1964})
\end{barticle}
\endbibitem

\bibitem[\protect\citeauthoryear{Sutskever et~al.}{2013}]{pmlr-v28-sutskever13}
\begin{bchapter}
\bauthor{\bsnm{Sutskever}, \binits{I.}},
\bauthor{\bsnm{Martens}, \binits{J.}},
\bauthor{\bsnm{Dahl}, \binits{G.}},
\bauthor{\bsnm{Hinton}, \binits{G.}}:
\bctitle{On the importance of initialization and momentum in deep learning}.
In: \beditor{\bsnm{Dasgupta}, \binits{S.}},
\beditor{\bsnm{McAllester}, \binits{D.}} (eds.)
\bbtitle{Proceedings of the 30th International Conference on Machine Learning}.
\bsertitle{Proceedings of Machine Learning Research},
vol. \bseriesno{28},
pp. \bfpage{1139}--\blpage{1147}.
\bpublisher{PMLR},
\blocation{Atlanta, Georgia, USA}
(\byear{2013}).
\burl{https://proceedings.mlr.press/v28/sutskever13.html}
\end{bchapter}
\endbibitem

\bibitem[\protect\citeauthoryear{Nesterov}{1983}]{Nesterov1983}
\begin{barticle}
\bauthor{\bsnm{Nesterov}, \binits{Y.}}:
\batitle{A method for unconstrained convex minimization problem with the rate of convergence $\mathcal{O}(1/k^2)$}.
\bjtitle{Doklady AN SSSR}
\bvolume{269},
\bfpage{543}--\blpage{547}
(\byear{1983}).
\bcomment{Translated as Soviet Math. Dokl.}
\end{barticle}
\endbibitem

\bibitem[\protect\citeauthoryear{Nesterov}{2004}]{Nesterov2004}
\begin{botherref}
\oauthor{\bsnm{Nesterov}, \binits{Y.}}:
{I}ntroductory lectures on convex optimization: {A} basic course.
Kluwer Academic Publishers
(2004)
\end{botherref}
\endbibitem

\bibitem[\protect\citeauthoryear{Yuan et~al.}{2016}]{JMLR_yuan2016}
\begin{barticle}
\bauthor{\bsnm{Yuan}, \binits{K.}},
\bauthor{\bsnm{Ying}, \binits{B.}},
\bauthor{\bsnm{Sayed}, \binits{A.H.}}:
\batitle{On the influence of momentum acceleration on online learning}.
\bjtitle{Journal of Machine Learning Research}
\bvolume{17}(\bissue{192}),
\bfpage{1}--\blpage{66}
(\byear{2016})
\end{barticle}
\endbibitem

\bibitem[\protect\citeauthoryear{Jin et~al.}{2022}]{jin2022convergence}
\begin{botherref}
\oauthor{\bsnm{Jin}, \binits{R.}},
\oauthor{\bsnm{Xing}, \binits{Y.}},
\oauthor{\bsnm{He}, \binits{X.}}:
On the Convergence of mSGD and AdaGrad for Stochastic Optimization
(2022)
\end{botherref}
\endbibitem

\bibitem[\protect\citeauthoryear{Liu et~al.}{2019}]{liu2019variance}
\begin{botherref}
\oauthor{\bsnm{Liu}, \binits{L.}},
\oauthor{\bsnm{Jiang}, \binits{H.}},
\oauthor{\bsnm{He}, \binits{P.}},
\oauthor{\bsnm{Chen}, \binits{W.}},
\oauthor{\bsnm{Liu}, \binits{X.}},
\oauthor{\bsnm{Gao}, \binits{J.}},
\oauthor{\bsnm{Han}, \binits{J.}}:
On the variance of the adaptive learning rate and beyond.
arXiv preprint arXiv:1908.03265
(2019)
\end{botherref}
\endbibitem

\bibitem[\protect\citeauthoryear{Tran et~al.}{2022}]{Tran2022_ShufflingNesterov}
\begin{botherref}
\oauthor{\bsnm{Tran}, \binits{T.H.}},
\oauthor{\bsnm{Scheinberg}, \binits{K.}},
\oauthor{\bsnm{Nguyen}, \binits{L.M.}}:
{N}esterov Accelerated Shuffling Gradient Method for Convex Optimization.
PMLR
(2022).
\url{https://proceedings.mlr.press/v162/tran22a.html}
\end{botherref}
\endbibitem

\bibitem[\protect\citeauthoryear{Bottou}{2012}]{bottou2012stochastic}
\begin{botherref}
\oauthor{\bsnm{Bottou}, \binits{L.}}:
Stochastic gradient descent tricks.
Springer
(2012)
\end{botherref}
\endbibitem

\bibitem[\protect\citeauthoryear{Recht and Ré}{2011}]{Recht2011}
\begin{botherref}
\oauthor{\bsnm{Recht}, \binits{B.}},
\oauthor{\bsnm{Ré}, \binits{C.}}:
Parallel stochastic gradient algorithms for large-scale matrix completion.
Mathematical Programming Computation
\textbf{5}
(2011)
\doiurl{10.1007/s12532-013-0053-8}
\end{botherref}
\endbibitem

\bibitem[\protect\citeauthoryear{Nedi{\'c} and Bertsekas}{2001}]{nedic2001convergence}
\begin{botherref}
\oauthor{\bsnm{Nedi{\'c}}, \binits{A.}},
\oauthor{\bsnm{Bertsekas}, \binits{D.}}:
Convergence rate of incremental subgradient algorithms.
Stochastic optimization: algorithms and applications,
223--264
(2001)
\end{botherref}
\endbibitem

\bibitem[\protect\citeauthoryear{Nedic and Bertsekas}{2001}]{nedic2001incremental}
\begin{botherref}
\oauthor{\bsnm{Nedic}, \binits{A.}},
\oauthor{\bsnm{Bertsekas}, \binits{D.P.}}:
Incremental subgradient methods for nondifferentiable optimization.
SIAM
(2001)
\end{botherref}
\endbibitem

\bibitem[\protect\citeauthoryear{Shamir}{2016}]{shamir2016without}
\begin{bchapter}
\bauthor{\bsnm{Shamir}, \binits{O.}}:
\bctitle{Without-replacement sampling for stochastic gradient methods}.
In: \bbtitle{Advances in Neural Information Processing Systems},
pp. \bfpage{46}--\blpage{54}
(\byear{2016})
\end{bchapter}
\endbibitem

\bibitem[\protect\citeauthoryear{Le~Roux et~al.}{2012}]{SAG}
\begin{bchapter}
\bauthor{\bsnm{Le~Roux}, \binits{N.}},
\bauthor{\bsnm{Schmidt}, \binits{M.}},
\bauthor{\bsnm{Bach}, \binits{F.}}:
\bctitle{A stochastic gradient method with an exponential convergence rate for finite training sets}.
In: \bbtitle{NIPS},
pp. \bfpage{2663}--\blpage{2671}
(\byear{2012})
\end{bchapter}
\endbibitem

\bibitem[\protect\citeauthoryear{Defazio et~al.}{2014}]{SAGA}
\begin{bchapter}
\bauthor{\bsnm{Defazio}, \binits{A.}},
\bauthor{\bsnm{Bach}, \binits{F.}},
\bauthor{\bsnm{Lacoste-Julien}, \binits{S.}}:
\bctitle{Saga: A fast incremental gradient method with support for non-strongly convex composite objectives}.
In: \bbtitle{Advances in Neural Information Processing Systems},
pp. \bfpage{1646}--\blpage{1654}
(\byear{2014})
\end{bchapter}
\endbibitem

\bibitem[\protect\citeauthoryear{Johnson and Zhang}{2013}]{SVRG}
\begin{bchapter}
\bauthor{\bsnm{Johnson}, \binits{R.}},
\bauthor{\bsnm{Zhang}, \binits{T.}}:
\bctitle{Accelerating stochastic gradient descent using predictive variance reduction}.
In: \bbtitle{NIPS},
pp. \bfpage{315}--\blpage{323}
(\byear{2013})
\end{bchapter}
\endbibitem

\bibitem[\protect\citeauthoryear{Nguyen et~al.}{2017}]{Nguyen2017sarah}
\begin{bchapter}
\bauthor{\bsnm{Nguyen}, \binits{L.M.}},
\bauthor{\bsnm{Liu}, \binits{J.}},
\bauthor{\bsnm{Scheinberg}, \binits{K.}},
\bauthor{\bsnm{Tak{\'a}{\v{c}}}, \binits{M.}}:
\bctitle{Sarah: A novel method for machine learning problems using stochastic recursive gradient}.
In: \bbtitle{Proceedings of the 34th International Conference on Machine Learning-Volume 70},
pp. \bfpage{2613}--\blpage{2621}
(\byear{2017}).
\bcomment{JMLR. org}
\end{bchapter}
\endbibitem

\bibitem[\protect\citeauthoryear{Nguyen and Tran}{2023}]{Nguyen2022_SGDPL}
\begin{botherref}
\oauthor{\bsnm{Nguyen}, \binits{L.M.}},
\oauthor{\bsnm{Tran}, \binits{T.H.}}:
On the convergence to a global solution of shuffling-type gradient algorithms.
The 37th Conference on Neural Information Processing Systems (NeurIPS 2023)
(2023)
\end{botherref}
\endbibitem

\bibitem[\protect\citeauthoryear{Li et~al.}{2021}]{li2020exponential}
\begin{bchapter}
\bauthor{\bsnm{Li}, \binits{X.}},
\bauthor{\bsnm{Zhuang}, \binits{Z.}},
\bauthor{\bsnm{Orabona}, \binits{F.}}:
\bctitle{A second look at exponential and cosine step sizes: Simplicity, adaptivity, and performance}.
In: \bbtitle{Proceedings of the 38th International Conference on Machine Learning},
pp. \bfpage{6553}--\blpage{6564}
(\byear{2021})
\end{bchapter}
\endbibitem

\bibitem[\protect\citeauthoryear{Loshchilov and Hutter}{2017}]{loshchilov10sgdr}
\begin{botherref}
\oauthor{\bsnm{Loshchilov}, \binits{I.}},
\oauthor{\bsnm{Hutter}, \binits{F.}}:
SGDR: Stochastic Gradient Descent with Warm Restarts
(2017)
\end{botherref}
\endbibitem

\bibitem[\protect\citeauthoryear{Chang and Lin}{2011}]{LIBSVM}
\begin{barticle}
\bauthor{\bsnm{Chang}, \binits{C.-C.}},
\bauthor{\bsnm{Lin}, \binits{C.-J.}}:
\batitle{{LIBSVM}: A library for support vector machines}.
\bjtitle{ACM Transactions on Intelligent Systems and Technology}
\bvolume{2},
\bfpage{27}--\blpage{12727}
(\byear{2011}).
\bcomment{Software available at \url{http://www.csie.ntu.edu.tw/~cjlin/libsvm}}
\end{barticle}
\endbibitem

\bibitem[\protect\citeauthoryear{Polyak}{1964}]{polyak1964some}
\begin{barticle}
\bauthor{\bsnm{Polyak}, \binits{B.T.}}:
\batitle{Some methods of speeding up the convergence of iteration methods}.
\bjtitle{USSR Computational Mathematics and Mathematical Physics}
\bvolume{4}(\bissue{5}),
\bfpage{1}--\blpage{17}
(\byear{1964})
\end{barticle}
\endbibitem

\end{thebibliography}
\end{document}